\documentclass{amsart}

\usepackage[utf8]{inputenc}
\usepackage[T1]{fontenc}

\usepackage{amsmath}
\usepackage{amsthm}
\usepackage{amssymb}
\usepackage{amsfonts}
\usepackage{dsfont}
\usepackage{url}
\usepackage{graphicx}
\usepackage{hyperref}
\usepackage{color}
%\usepackage{tikz}
%\usepackage{pgfplots}
%\usetikzlibrary{shapes.misc}
%\usepackage{enumerate}
%\usepackage{fullpage}

%\usepackage{epsfig,graphicx,color}
%\definecolor{darkblue}{rgb}{.2, 0.2,.8}
%\definecolor{darkgreen}{rgb}{0,0.5,0.3}
%\definecolor{darkred}{rgb}{.8, .1,.1}
%\newcommand{\red}{\color{darkred}}
%\newcommand{\blue}{\color{darkblue}}
%\newcommand{\green}{\color{darkgreen}}
%\newcommand{\black}{\color{black}}

\theoremstyle{plain}
\newtheorem{theorem}{Theorem}[section]
\newtheorem{lemma}[theorem]{Lemma}
\newtheorem{proposition}[theorem]{Proposition}
\newtheorem{corollary}[theorem]{Corollary}
\newtheorem{remark}[theorem]{Remark}
\newtheorem{definition}[theorem]{Definition}\newtheorem{as}{Assumption}

%\pgfplotsset{compat=1.13}
%\setlength{\parindent}{0pt}

\renewcommand{\P}{\mathbb{P}} %probability measure
\newcommand{\E}{\mathbb{E}} %expectation
\newcommand{\N}{\mathbb{N}} %natural numbers
 %probability measure
\newcommand{\R}{\mathbb{R}} % real numbers
\newcommand{\bR}{ (-\infty, \infty]} % real numbers
\newcommand{\1}{\mathds{1}} % indicator function
 % real numbers

\newcommand{\ud}{{\rm d}}
\newcommand{\const}{{\rm C}}
\newcommand{\Mcal}{\mathcal{M}}
\newcommand{\Ccal}{\mathcal{C}}

\newcommand{\shift}{\mathfrak{T}}
\newcommand{\scale}{\mathfrak{S}}
\newcommand{\tip}{\mathfrak{V}}

\begin{document}

\title[Extremes of BRW with stretched exponential displacements]{The extremal point process for branching random walk with stretched exponential displacements}
\author{Piotr Dyszewski}
	\address{Instytut Matematyczny Uniwersytetu Wroc\l{}awskiego, Pl. Grunwaldzki 2/4 50-384, %newline
   Wroc\l{}aw, Poland }
\email{piotr.dyszewski@math.uni.wroc.pl}

\author{Nina Gantert}
\address{Fakultät für Mathematik, Technische Universität München, % \newline 
  Boltzmannstr.~3, 85748 Garching, Germany}
\email{gantert@ma.tum.de}
	\thanks{The research of PD was partially supported by the National Science Centre, Poland (Sonata, grant number 2020/39/D/ST1/00258) }
\maketitle

\begin{abstract}
	We investigate a branching random walk where the~displacements are independent from the branching mechanism and have a stretched exponential 
	distribution.  
	We describe the positions of the particles in the vicinity of the rightmost particle in~terms of point process convergence.
	As a consequence we give a~new limit theorem for the position of the rightmost particle. Our methods rely on providing precise large deviations for 
	sums of i.i.d. random variables with stretched exponential distributions outside the so-called one big jump regime.\smallskip

	\noindent \textbf{Keywords.} branching random walk, limit theorem, point processes, stretched exponential distribution, precise large deviations

	\noindent \textbf{AMS 2000 subject classification:} 60F10, 60J80, 60G50. 
\end{abstract}

\section{Introduction} \label{sec:intro}

	Branching random walk (BRW) has been studied for more than forty years and during this time, due to its relation with the KPP equation, a 
	lot of effort was made to understand the behaviour of its extremes. In the existing literature one can distinguish two different 
	regimes depending on the laws of the 
	displacements, however not many results address BRW outside of this two classical cases.  
	The first one treats displacements with thin tails, i.e. possessing some exponential moments. 
	In this case the asymptotic behaviour of the extremes of BRW is determined by the contribution coming from many, typical particles, 
	see~\cite{J.M.Hammersley1974, 75:kingman:first, 76:biggins:first, 13:aidekon:convergence, madule2017}. 
	In the second regime one investigates regularly varying steps, in this case the asymptotic behaviour of the extremes of BRW is determined by a few, 
	atypically big displacements, see~\cite{Durrett1983, bhattacharya:2017:point}. 
	The contrast between these two cases raises the question about a regime in which both big jumps and typical displacements 
	contribute to the asymptotics 
	of the extremes of BRW. 
	It turned out that in the case of~stretched exponential displacements one can see a clear transition between the two aforementioned 
	regimes~\cite{dyszewski:max:2022}. We aim to give a complete picture of the extremes of BRW in this case.\medskip

	In what follows we study BRW which is a branching process with a spatial component that can be described as follows. 
	The process will evolve at time 
	epochs $n \in \N = \{ 0,1,2, \ldots \}$. 
	Suppose that at time $n=0$ one particle is placed at the origin of the real line $\R$. At time $n=1$ the initial particle splits into a 
	random number 
	of new particles which are randomly 
	displaced from their place of birth. We will always assume that all displacements are independent copies of a given random 
	variable $X$. From now on, 
	all particles independently evolve in the same way as the initial particle. Denote by 
	$\Lambda_n$ the point process obtained by putting a~unit mass in the position of each particle present in the system at time $n$. 
	Note that $\Lambda_n$ is not
	necessarily simple, since two particles can occupy the same position. 
	The sequence $\{\Lambda_n\}_{n \in \N}$ is called a branching random walk. 
	We refer to Section~\ref{sec:prel} for a more detailed set-up and to the recent monograph \cite{Shi2015} for an introduction to the topic.\medskip
	
	For a point measure $\eta = \sum_i\delta_{x_i}$ and $a, b \in \R$, $a>0$ denote the translation of $\eta$ and scaling of $\eta$ via
	\begin{equation*}
		\shift_b \eta = \sum_{i} \delta_{x_i+b} \qquad \mbox{and} \qquad \scale_a \eta = \sum_{i} \delta_{a x_i} 
	\end{equation*}
	respectively.
	The aim of the present article is a description of the asymptotic behaviour of the extremes of $\{\Lambda_n\}_{n \in \N}$ by finding sequences 
	$\{a_n\}_{n \in \N}$ and $\{b_n\}_{n \in \N}$ for which $\scale_{a_n^{-1}}\shift_{-b_n} \Lambda_n$
	has a non-trivial limit in distribution in the sense of point process convergence, which we will make precise in Section~\ref{sec:prel}. 
	Such a~choice is not unique by any means. We will pick  the pair 
	$a_n$ and $b_n$ which allows to describe the position of the rightmost particle. In other words, we will work with sequences $a_n$ and $b_n$ 
	for which 
	the limiting configuration of $\scale_{a_n^{-1}}\shift_{-b_n} \Lambda_n$ does 
	not contain a sequence diverging to infinity. Under some mild moment conditions on the number of offspring of the~initial particle, 
	the asymptotic behaviour of 
	$\{\Lambda_n\}_{n \in \N}$ is determined mainly 
	by the law of the displacements, i.e. the law of $X$. In the classical case one works assuming Cram\'er's condition
	\begin{equation}\label{eq:1:cramer}
		\E \left[ e^{s|X|} \right] <\infty \quad \mbox{for some  $s>0$}.
	\end{equation}
 	Under condition~\eqref{eq:1:cramer} the contribution of a single displacement is negligible and the position of the rightmost particle 
	is concentrated 
	around its mean which moves at a linear speed. 
	One can show that there are 
	constants $\const_1, \const_2$ such that 
	\begin{equation*}
		\shift_{-\const_1 n - \const_2 \log n}\Lambda_n 
	\end{equation*}
	converges to a non-trivial limit, see~\cite{madule2017}. If the condition~\eqref{eq:1:cramer} is not satisfied, some additional 
	scaling is necessary. 
	For example, consider the case of 
	regularly varying tails, i.e. a random variable $X$ such that as $t$ tends to infinity
	\begin{equation}\label{eq:1:regular}
		\P\left[ X > t\right] \sim \const \cdot t^{-\gamma},
	\end{equation}
	for some constants $\gamma, \const >0$. Here and throughout the article the above relation means that the quotient of the two 
	quantities tends to one 
	(see the end of this Section for a~precise statement). 
	The regularly varying distributions follow the so called "one big jump principle" (see \eqref{eq:2:onebig}). Using this fact one can observe a 
	different limiting behaviour of $\Lambda_n$. 
	Namely, if one denotes by $m>1$ the expected 
	number of children of the initial particle, one can show that
	\begin{equation*}
		\scale_{m^{-n/\gamma}} \Lambda_n
	\end{equation*}
	converges in law to a non-trivial limit, see~\cite{bhattacharya:2017:point}. The scaling present above is necessary to 
	compensate the contribution of atypical, 
	huge displacements which, in contrast to 
	the previous case, is not negligible. \medskip
	
	We will investigate an intermediate case between~\eqref{eq:1:cramer} and~\eqref{eq:1:regular}, namely stretched exponential, 
	or Weibull, displacements 
	with tails of the form
	\begin{equation*}%\label{eq:1:stretched}
		\P\left[X>t \right] \sim  e^{- t^r \ell(t)},	
	\end{equation*}
	for $r \in (0,1)$ and a slowly varying function $\ell$. Recall that a function $\ell: \R \to (0, \infty)$ is slowly varying (at infinity) 
	if for any 
	constant $\const>0$, $\ell(\const t) \sim \ell(t)$ as $t \to \infty$. 
	We will show in our main result that
	\begin{equation*}
		 \scale_{n^{1-1/r}\ell_2(n)}\shift_{-n^{1/r}\ell_1(n)}\Lambda_n 
	\end{equation*}
	converges for an appropriate choice of slowly varying functions $\ell_1$ and $\ell_2$.  Moreover we will provide a detailed asymptotic expansion of 
	$n^{1/r}\ell_1(n)$.
	As a consequence one gets a~new limit theorem for $M_n$, the position of the rightmost particle in generation $n$.
	Namely, we will infer that
	\begin{equation*}
		n^{1-1/r}\ell_2(n)\left( M_n - n^{1/r}\ell_1(n) \right)
	\end{equation*}
	converges in distribution to a random shift of the Gumbel law.
	Such results were obtained recently for constant $\ell$ in the case $r \leq  2/3$, see~\cite{dyszewski:max:2022}. 
	Our proof relies on precise large deviation estimates for sums of i.i.d. random variables with stretched exponential tails. Since the corresponding 
	results established in the 
	sixties~\cite{1969:nagaev:integral, 1969:nagaev:integral2} were too 
	implicit for our needs, we revisit the problem and provide a more explicit description of the large deviation probabilities on the 
	polynomial scale.\medskip
	
	The paper is organized as follows. In Section~\ref{sec:prel} we give a precise description of the model and our assumptions.
	Our main results, Theorem \ref{thm:2:main} and Corollary \ref{cor:main}, are presented in  
	Section~\ref{sec:main}. 
	In Sections~\ref{sec:trimming} and~\ref{sec:dec} we present the main steps of our arguments, namely trimming and 
	decoupling of the random weighted tree. 
	Lastly, in Section~\ref{sec:lim} we give the final arguments of our proof and in Section~\ref{sec:LD} we prove precise large deviations for sums of 
	stretched exponential random variables, 
	which may be of independent interest.\medskip
	
	 Throughout the article we write $g(t) \sim f(t)$ for two functions $f, g \colon \R \to \R$ whenever
	 \begin{equation*}
	 	\lim_{t \to \infty} f(t)/ g(t) =1
	 \end{equation*}  
	 and we write $f(t) \gg g(t)$ if $g(t) = o(f(t))$, i.e. if $f(t)/g(t) \to 0$.
	 We will denote by $\const$ positive constants whose value is not important for us. 
	 Note that the value of $"\const"$ may change from line to line. 
	
\section{Preliminaries}\label{sec:prel}	

	Let us now give a precise definition of our model. Since we will assume that the displacements and the reproduction mechanism are independent, 
	we will introduce both of these components separately. 
	If we denote by $Z_n = \Lambda_n(\R)$ the number of particles present in the system at time $n$, then the sequence 
	$Z=\{Z_n\}_{n \in \N}$ forms a Galton-
	Watson process starting from 
	$Z_0=1$, i.e. with one particle, and a~reproduction law with mean
	\begin{equation*}
		m = \E \left[Z_1\right].
	\end{equation*} 
	It is well known from the classical literature, see~\cite[Theorem I.A.5.1]{athreya:1972:branching}, that if $\P[Z_1=1]$ is strictly less than $1$, 
	$Z$ survives with positive probability, that is
	\begin{equation*}
		 \P\left[ \forall \: n \geq 0, \: Z_n >0 \right]>0
	\end{equation*}
	if and only if $m>1$. Since our objects become trivial on the set of extinction of the underlying Galton-Watson process, one can work under the 
	conditional probability
	\begin{equation*}
		\P^*\left[ \: \cdot \: \right] = \P\left[ \: \cdot \: |   \:   \forall \: n \in \N, \:Z_n >0\: \right].
	\end{equation*}
	In fact one can recast our main results in terms of weak convergence under the probability measure $\P^*$. We will however, for notational simplicity,
	continue to work under $\P$.
	Under a mild integrability assumption, known as the Kesten-Stigum condition given below, the exponential growth rate of $Z$ is strictly 
	positive on the event of survival (see~\eqref{Wdef} below).

	\begin{as} \label{as:BP} 
		We have $\P[Z_1=1]<1$ and $\E[ Z_1 \log^+ Z_1] <\infty$ and the Galton-Watson process $Z$ is supercritical, that is $m>1$. 
	\end{as}

	Note that by the martingale convergence theorem there is a random variable $W$ such that $\P$- almost surely, 
	\begin{equation}\label{Wdef}
		\lim_{n \to \infty}m^{-n}Z_n = W.
	\end{equation}
	If Assumption~\ref{as:BP} is satisfied then the convergence holds also in $L^1$ and moreover $\P^*[W > 0] =1$, 
	see for example~\cite[Chapter 2]{Shi2015}. This means that 
	\begin{equation*}
		\P[W=0] = \P[\exists n\in \N, \: Z_n=0].
	\end{equation*}
	The random variable $W$ will play a~crucial role in our results: it will be a~shift parameter in the limit of $\Lambda_n$. \medskip

	We will now describe the branching structure in more detail and introduce the displacements. Denote by 
	$\mathbb{T} \subseteq \mathbb{U} = \{\emptyset\} \cup \bigcup_{n \geq 1}\N^n$ the Galton-Watson tree corresponding to 
	$Z$ with Ulam-Harris labelling. That is, label the initial particle with $\emptyset$ and its children with
	$\{1,2, \ldots , Z_1\} \subset \N$ and for each particle with a label $w \in \mathbb{T}$, label its children with labels of the form 
	$(w,1), (w,2), \ldots , (w, N(w))$, where $N(w)$ is the random variable denoting the number of children of 
	the individual labelled with $w$. Write $|w|=n$ if $w$ is a label of a particle from generation $w$, in other words if $w \in \N^n$. 
	Denote by $(\emptyset, w]$ the unique vertex path from $\emptyset$ to $w$ 
	(excluding $\emptyset$ and including $w$) and write $ w \wedge z$ to denote 
	the last common ancestor of $w$ and $z$, i.e. the unique element of $\mathbb{T}$ for which 
	$[\emptyset, w] \cap [\emptyset, z] = [\emptyset, w \wedge z]$. 
	Finally, write $w \leq z$ whenever $w$ is an ancestor of 
	$z$, that is  $w \in [\emptyset, z]$ and $\mathbb{T}_n$ for the particles from the $n$th generation present in the system, i.e. 
	$\mathbb{T}_n =\{ w \in \mathbb{T} \: : \: |w|=n\}$. \medskip
	
	To model the displacements consider a family $\{ X_w \: : \: w \in  \mathbb{U} \}$ of independent copies of $X$ which are independent of $Z$. 
	The random variable $X_w$ represents the displacement of the particle with label $w$
	from its place of birth. The position 
	of the particle with label $w$ is therefore represented via
	\begin{equation*}
		V(w) = \sum_{ z \in (\emptyset, w]} X_z.
	\end{equation*} 
	To study the collection $\{ V(w)\}_{w \in \mathbb{T}_n}$ define a sequence of point processes by
	\begin{equation*}
		\Lambda_n = \sum_{|w|=n} \delta_{V(w)}.
	\end{equation*}
	We call the sequence $\{\Lambda_n\}_{n \in \N}$ a branching random walk. We will regard $\Lambda_n$ as a~point process, that is  a random element of 
	$\Mcal_p=\Mcal_p(\bR)$, the space of point measures on $\bR$ equipped with the topology 
	of vague convergence. Here $(-\infty, +\infty]$ denotes the homomorphic image of $(0,1]$ where closed neighbourhoods of $+\infty$ are compact.  
	Recall that $\eta$ is a point measure, i.e. an element of $\Mcal_p$, if it can be written in the form 
	$\eta = \sum_i \delta_{x_i}$ for some sequence $\{x_i\}_i \subset \bR$ without an accumulation point. 
	We refer to~\cite[Chapter 3.1]{resnick:2013:extreme} for a more 
	in-depth description of $\Mcal_p$.
	We equip $\Mcal_p$ with the topology of vague convergence, and say 
	that $\eta_n \to \eta$ in $\Mcal_p$ (or vaguely) if for any function $f$ from the set $\Ccal_K^+=\Ccal_{K}^+(\bR)$ of nonnegative, 
	continuous functions 
	with compact 
	support on $\bR$, $\int f(x) \eta_n(\ud x) \to \int f(x) \eta(\ud x)$.
	In our results we use weak convergence of random elements of
	$\Mcal_p$ (denoted by $\Rightarrow$), which is equivalent to the following condition, see \cite[Proposition 3.19]{resnick:2013:extreme}.
	For point processes $\Theta_n$, $\Theta$ we have 
		\begin{equation*}
			\Theta_n \Rightarrow \Theta \quad \mbox{ in }\Mcal_p,
		\end{equation*}
		if and only if for any $f\in \Ccal_K^+$,  
		\begin{equation*}
			 \int f\left( x \right) \:  \Theta_n (\ud x)  \overset{d}\to \int f(x)  \: \Theta(\ud x),
		\end{equation*}
		where $\overset{d}\to$ denotes the convergence in law of random variables.
	Our goal is to study weak convergence of 
	\begin{equation*}
		\scale_{a_n^{-1}}\shift_{-b_n}\Lambda_n=\sum_{|w|=n} \delta_{  a_n^{-1}(V(w) - b_n)}
	\end{equation*}	
	in $\Mcal_p$. Aiming to find a correct choice of $b_n$'s and $a_n$'s we first note that the convergence of 
	$\scale_{a_n^{-1}}\shift_{-b_n}\Lambda_n$ in $\Mcal_p$ describes the behaviour of $\Lambda_n$ 
	near its right frontier, that is the location of the rightmost particle, which we can write as
	\begin{equation}\label{maxdef}
		M_n = \max_{|w|=n} V(w)
	\end{equation}
	with the convention that the maximum of an empty set is $-\infty$. 
	The first order of magnitude of $M_n$ can be predicted using the first moment method.
 	Take $X_1, X_2, \ldots$ to be i.i.d. copies of $X$ and let $S_n = \sum_{k=1}^nX_k$ denote the corresponding random walk.
	For a~sequence of real numbers $\{x_n\}_{n\in \N}$ write
	\begin{equation}\label{eq:2:firstmoment}
		\P\left[ M_n >x_n \right]  = \P \left[ \exists w \in \mathbb{T}_n, \:  V(w) >x_n \right] 
		\leq \E[Z_n] \P[S_n>x_n]= m^n\P[S_n>x_n]\, . 
	\end{equation}
	For the estimate~\eqref{eq:2:firstmoment} to give any non-trivial information about the asymptotics of $M_n$,  one needs to choose the sequence 
	$\{x_n\}_{n\in \N}$ such that the probabilities $\P[S_n> x_n]$ decay as $m^{-n}$, that is
	exponentially fast. Here large deviation estimates for the random walk $\{S_n\}_{n \in \N}$ come into play.\medskip
	
	In the classical case, under Cram\'er's condition~\eqref{eq:1:cramer}, it is well known that if one takes the sequence of the form 
	$x_n = \rho n$ for 
	$\rho > \E[X]$, by the Bahadur-Ranga-Rao theorem, 
	see~\cite[Theorem 3.7.4]{dembo:1998:large},
	\begin{equation}\label{eq:2:rao}
		\P[S_n > \rho n] \sim \const(\rho) n^{-1/2} e^{-\psi^*(\rho) n},
	\end{equation}
	where $\psi^*(\rho) = \sup_{s} \left[ s\rho - \psi(s) \right]$, with $\psi(t) = \log \E\left[ e^{tX}\right]$ and a constant $\const(\rho)$ 
	depending on $\rho$. 
	Provided that $\rho < \sup \{ \psi'(s) \: : \: \psi(s)<\infty\}$, under some mild regularity assumptions, there exists 
	$s_\rho$ such that $\psi'(s_\rho) = \rho$ and then $\psi^*(\rho) = \rho s_\rho - \psi(s_\rho)$.
	Now, our estimate for $\P[M_n>x_n]$ suggests that one should find a constant $\rho$ such that
	$\psi^*(\rho) = \rho s_{\rho} -\psi(s_{\rho}) = \log m$.
	Then one could expect that, after establishing the lower bound corresponding to~\eqref{eq:2:firstmoment}, 
	\begin{equation*}
		\lim_{n \to \infty} M_n/n = \const_1 = \rho  \qquad \P^*\text{- a.s. }
	\end{equation*}
	This is indeed true and dates back to~\cite{J.M.Hammersley1974, 76:biggins:first,  75:kingman:first}. 
	Since the seventies a lot of effort was devoted to a 
	precise description of the asymptotics of $M_n$ with the ultimate result stating that, with $\const_2 = 3/(2s_\rho)$,
	\begin{equation*}
		M_n - \const_1n - \const_2\log n
	\end{equation*} 
	converges in distribution to a random shift of the Gumbel law, see~\cite{13:aidekon:convergence}. One can in fact argue that in this case 
	there is a relatively small portion of the 
	particles from generation $n$ in the neighbourhood of $M_n$ whose children are most likely to be the rightmost one in 
	generation $n+1$ (see Figure~\ref{fig:one}).
	\begin{figure}
		\includegraphics[width=0.5\textwidth]{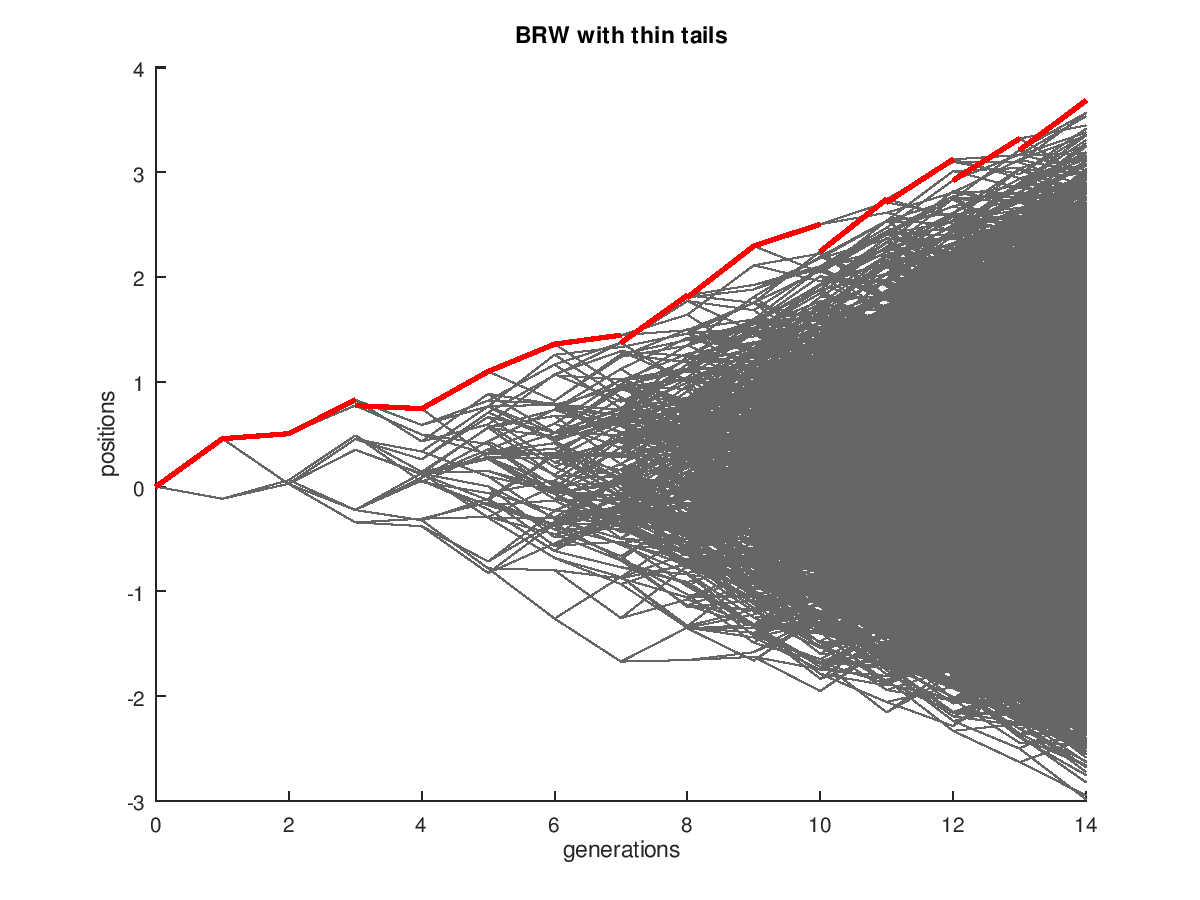}\includegraphics[width=0.5\textwidth]{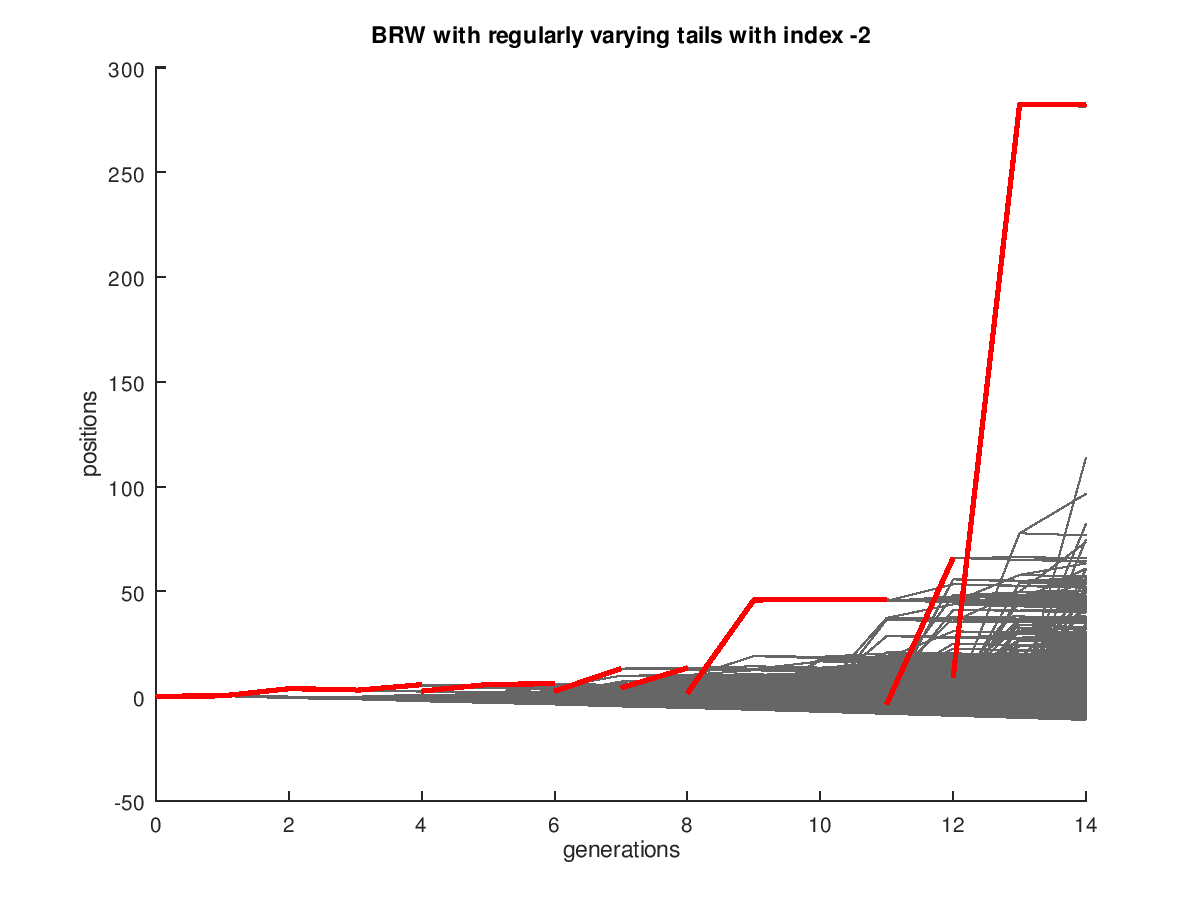}
		\centering
		\caption{BRW with steps distributed uniformly on the interval (-1/2, 1/2) (left) and regularly varying steps with index $\gamma = 2$ (right). 
		The red lines indicate the displacements of the rightmost particles from its place of birth.}
		\label{fig:one}
	\end{figure}
	This is the scale to study the extremes of BRW. Indeed the point process
	\begin{equation}\label{eq:2:pointcramer}
		\shift_{-\const_1n - \const_2\log n}\Lambda_n  
	\end{equation} 
	has a weak limit in $\Mcal_p$, see~\cite{madule2017} for details. It turns out that the limiting measure of~\eqref{eq:2:pointcramer} 
	is a randomly shifted 
	decorated Poisson process~\cite{subag2015freezing}. 
	\begin{definition}\label{def:2:dppp}
		 $\Theta$ is a decorated Poisson point process 
		of intensity $v$ and decoration $D$
		if $\Theta$ is distributed as $\sum_{i \geq 1} \shift_{\zeta_i} D_i$ where $\sum_{i\geq 1}\delta_{\zeta_i}$ is a Poisson point 
		process with 
		intensity $v$ and $D_i$ are i.i.d. copies of a point process $D$. 
	\end{definition}
	
	\begin{definition}\label{def:2:sdppp}
		 $\Theta$ is a randomly shifted decorated Poisson point process 
		of intensity $v$, decoration $D$ and shift $S$ 
		if $\Theta$ is distributed as $\sum_{i \geq 1} \shift_{\zeta_i+S} D_i$ where $\sum_{i\geq 1}\delta_{\zeta_i}$ is a Poisson point 
		process with 
		intensity $v$ and $D_i$ are i.i.d. copies of a point process $D$. One then writes
		$\Theta \sim {\rm SDPPP}(v, D, S)$. 
	\end{definition}
	The weak limit of~\eqref{eq:2:pointcramer} is ${\rm SDPPP}(\const_3 e^{-x} \ud x, N, -\log(D_\infty))$ for some positive $\const_3$, 
	some point process $N$ and $D_\infty$ denoting the a.s. limit of the so-called 
	derivative martingale associated with the BRW~\cite[Chapter 5]{Shi2015}.  \medskip

	The behaviour of $M_n$ under~\eqref{eq:1:regular} is very different. In this case the law of $X$ obeys the "one big jump principle", that is
	\begin{equation}\label{eq:2:onebig}
		\P[S_n> x_n] \sim n \P[X>x_n] 
	\end{equation} 
	provided that $x_n$ is large enough (for example $x_n \gg \sqrt{n\log n}$ if $\gamma>2$). 
	Going back to the estimate~\eqref{eq:2:firstmoment} one can expect, taking into account \eqref{eq:2:onebig} 
	and considering $x_n = m^{n/\gamma}$, that
	\begin{equation*}
		m^{-n/\gamma} M_n 
	\end{equation*}
	converges in law. Indeed, this sequence converges in law to a random shift of the Fréchet distribution, see~\cite{Durrett1983}. 
	In this case the rightmost particles are most likely to originate from the bulk of the population, i.e. the exponential number of 
	typically placed particles (see Figure~\ref{fig:one}).
	Similarly as before, one has convergence of the associated point process and one can show that 
	after an appropriate scaling $\Lambda_n$ converges to so-called Cox cluster process, see~\cite{bhattacharya:2017:point}. 
	Due to the similarity with our main result we now describe the limiting point process in detail.
	Assume for simplicity that the displacements are positive and let $\{T_l\}_{l \in \N}$ be a collection of i.i.d. random variables with 
	common distribution given by
	\begin{equation}\label{eq:2:tk}
		\P[T_1 = k ] = \frac{1}{\upsilon} \sum_{i=0}^\infty m^{-i} \P[Z_i=k], \quad k=1,2, \ldots ,  
	\end{equation} 
	where 
\begin{equation}\label{vdef}
\upsilon =  \sum_{i=0}^\infty m^{-i} \P[Z_i>0]\, .
\end{equation}
	Next let $\{g_l\}_{l\in \N}$ denote the points of a Poisson point process with intensity\\
 $\gamma x^{-\gamma-1}\1_{(0,\infty)}(x)\ud x$. Then  
	\begin{equation*}
		\scale_{m^{-n/\gamma}} \Lambda_n \Rightarrow \sum_{l=0}^\infty T_l \delta_{(\upsilon W)^{1/\gamma} g_l},
	\end{equation*}
	in $\Mcal_p$, where $W$ is given by \eqref{Wdef}. The coefficients $\{ T_l\}_{l \in \N}$ represent the number of 
	descendants of particles that made a big jump.
	As we will see, the limiting point process in the case of stretched exponential displacements has a very similar representation. 
	
\section{Main result}\label{sec:main}	 
	 
	We turn our attention to a case between~\eqref{eq:1:cramer} and~\eqref{eq:1:regular}, namely the case where the displacements have a 
	stretched exponential tail. 
	We will impose some regularity condition on the regularly varying function $R(x) = x^r\ell(x)$ that appears in the tail of the displacement law. 
	One can usually circumvent this kind of 
	restrictions using the smooth variation theorem~\cite[Theorem 1.8.2]{bingham_goldie_teugels_1987}. However this is not sufficient for our 
	proposes since we would 
	need an approximation up to a certain order. We will thus impose sufficient smoothness on $R$ from the beginning.
	From now on we will assume that the law of the displacements is given as follows.  

	\begin{as} \label{as:RW}
		The random variable $X$ is centred $(\E [ X ] =0)$, has variance $1$ $(\E\left[X^2 \right]=1)$ and has a stretched exponential 
		upper tail with index 
		$r \in (0,1)$, that is for some positive $x_0$,
		\begin{equation*}
                  \P[X > x]=a(x)e^{-R(x)}, \qquad x>x_0,
		\end{equation*}
		where $a, R \colon (x_0, +\infty) \to [0, \infty)$, $a(x) \to a>0$ as $x \to \infty$ and $R$ is a $\mathcal{C}^1(x_0, +\infty)$ function 
		such that $R'$ is regularly varying with index $r-1$ and differentiable with $R''(x) = O(x^{r-2+\epsilon})$ for any $\epsilon>0$. 
		Moreover we assume that 
		$R(x) x^{-d}$ is eventually 
		decreasing for some $d\in (0,1)$ and if $r>2/3$ we assume additionally that $R(x) x^{-1/2-\epsilon}$ is eventually 
		increasing and $R(x)x^{-1+\epsilon}$ 
		is eventually decreasing for some $\epsilon>0$.  Further, suppose that 
		\begin{equation*}
			 \E[|X|^k]<\infty
		\end{equation*}
		for all $k \leq \left\lfloor (2-r)/(1-r)\right\rfloor$.
	\end{as}

	We note right away that since $a(\cdot)$ is assumed to be bounded away from zero, the condition regarding regularity of $R$ 
	excludes discrete random variables,
	i.e. taking values in a closed, countable set. 
	Recall that $R'$ is regularly varying at infinity with index $r-1$, if it can be written in the form 
	\begin{equation*}
		R'(x) = x^{r-1}\ell_{R'}(x),\quad x> x_0
	\end{equation*} 
	for some slowly varying function $\ell_{R'}$. 
	By Karamata's Theorem~\cite[Theorem 0.6]{resnick:2013:extreme} this implies that $R$ is also regularly varying 
	with index $r$, i.e. it is of the form 
	\begin{equation*}
		R(x)= x^r \ell_{R}(x), \qquad x>x_0
	\end{equation*}
	for some slowly varying function $\ell_R$. There is a natural relation between $\ell_R$ and $\ell_{R'}$, but it will be of 
	little importance for us and 
	thus we refrain from stating it explicitly. 
	Note that under Assumption~\ref{as:RW}, $R''(x)/R'(x)^2 \to 0$ as $x \to \infty$ and thus $e^{-R(x)}$ is a Von Mises function with an auxiliary 
	function $1/R'(x)$ \cite[Proposition 1.1]{resnick:2013:extreme}. 
	This further implies that under Assumption~\ref{as:RW} 
	the distribution of $X$ lies in the max-domain of attraction of the Gumbel law~\cite[Proposition 1.4]{resnick:2013:extreme}. 
	We will use this fact on the exponential scale. For sufficiently large $n$ we put
	\begin{equation}\label{eq:3:dnDef}
		d_n = \inf\{ x \in (x_0, \infty) \: : \: R(x)> n \log m \}.
	\end{equation}
	Note that by continuity of $R$, for sufficiently large $n$, $d_n$ is the smallest solution to $R(d_n) = n \log m $.
	The sequence $\{d_n\}_{n \in \N}$ is regularly varying with index $1/r$, i.e.
	\begin{equation*}
		d_n = n^{1/r} \ell_d(n)
	\end{equation*}
	for some slowly varying function $\ell_d$~\cite[Proposition 0.8]{resnick:2013:extreme}. Let also 
	\begin{equation}\label{eq:3:anDef}
		a_n = 1/R'(d_n)
	\end{equation}
	for $n$ large enough.
	Since a composition of two regularly varying function is regularly varying, $a_n$ admits the representation 
	\begin{equation}\label{whatisa_n}
		a_n = n^{1/r-1}\ell_a(n)
	\end{equation}
	for some slowly varying function $\ell_a$. Extreme value theory asserts that, under Assumption~\ref{as:RW}, the maximum of 
	$m^n$ independent copies of $X$ shifted by $d_n$ and 
	scaled by $a_n$ converges in distribution to the Gumbel 
	law, that is for any $x \in \R$, as $n \to \infty$,
	\begin{equation*}
		\P[X_k \leq d_n +a_nx, \: k\leq m^n] \to e^{-ae^{-x}}.
	\end{equation*}
	Note that here one uses the fact that $R''(x)/(R'(x))^2 \to 0$ as $x \to \infty$. As we will soon see, the sequences 
	$\{d_n\}_{n \in \N}$ and $\{a_n\}_{n \in \N}$ will drive the centring and 
	scaling for $\Lambda_n$. In fact the latter sequence is the scaling for $\Lambda_n$.\medskip

	Large deviations for random walks under Assumption~\ref{as:RW} date back to the late sixties, 
	see~\cite{1969:nagaev:integral, 1969:nagaev:integral2, rozovskii1994probabilities, linnik1961limit} or~\cite{Denisov2008} for a 
	general set-up. 
	As it turns out~\eqref{eq:2:onebig} is satisfied also in this case provided that 
	$x_n \gg n^{1 /(2-2r)+\varepsilon}$ for some $\varepsilon>0$. Such deviations are said to lie in the one big jump domain. 
	Once again the estimate~\eqref{eq:2:firstmoment} 
	suggests that in this case $M_n$ should be of order 
	$d_n = n^{1/r}\ell_d(n)$. 
	This causes some technical problems since $d_n \gg  n^{ 1/(2-2r)+\varepsilon}$ if and only if $ r < 2/3$. 
	Indeed for $r \geq  2/3$ and $x_n$ of the order $d_n$, the asymptotic of $\P[S_n>x_n]$ differs from the one of 
	$n\P[X>x_n]$. However the latter still provides the leading term in the asymptotic of the former. More precisely, for any $r \in (0,1)$, 
	\begin{equation}\label{logas}
		\log \P\left[S_n>d_n\right] = -(\log m) n (1+o(1)),
	\end{equation}
	see~\cite[Theorem 3]{Gantert2000}. In the sequel we will revisit this problem to provide a new description of the classical 
	results~\cite{1969:nagaev:integral} on the precise asymptotic of $\P[S_n > x_n]$ with 
	$x_n \sim  d_n$. The~logarithmic asymptotics \eqref{logas} suffices to predict the law of large numbers for $M_n$. 
	Indeed, as shown in~\cite{Gantert2000} we have
	\begin{equation*}
		\lim_{n \to \infty }M_n/ d_n = 1 \qquad \P^*- \text{ a.s. } 
	\end{equation*}	
	The behaviour of $M_n - d_n$ was recently studied in \cite{dyszewski:max:2022} for the special case $R(x) = \lambda x^r$ for some $\lambda>0$. 
	In this case 
	\begin{equation*}
		d_n = ( ( \log m) /\lambda)^{1/r} n^{1/r} \quad \mbox{and} \quad a_n = r ((\log m)/\lambda)^{1/r-1}n^{1/r-1}. 
	\end{equation*}	
	It transpires that for $r \leq 2/3$ the rightmost particle is most likely to originate from the bulk of the population (see Figure~\ref{fig:2}) 
	and for $r>2/3$ it typically originates from a subexponential number of particles that deviate from the bulk (located around $O(n^{2-1/r})$), 
	see Figure~\ref{fig:2}. This in turn leads to a more balanced behaviour of $M_n$ for big values of $r \in (0,1)$.
	\begin{figure}
		\centering
		\includegraphics[width=0.5\textwidth]{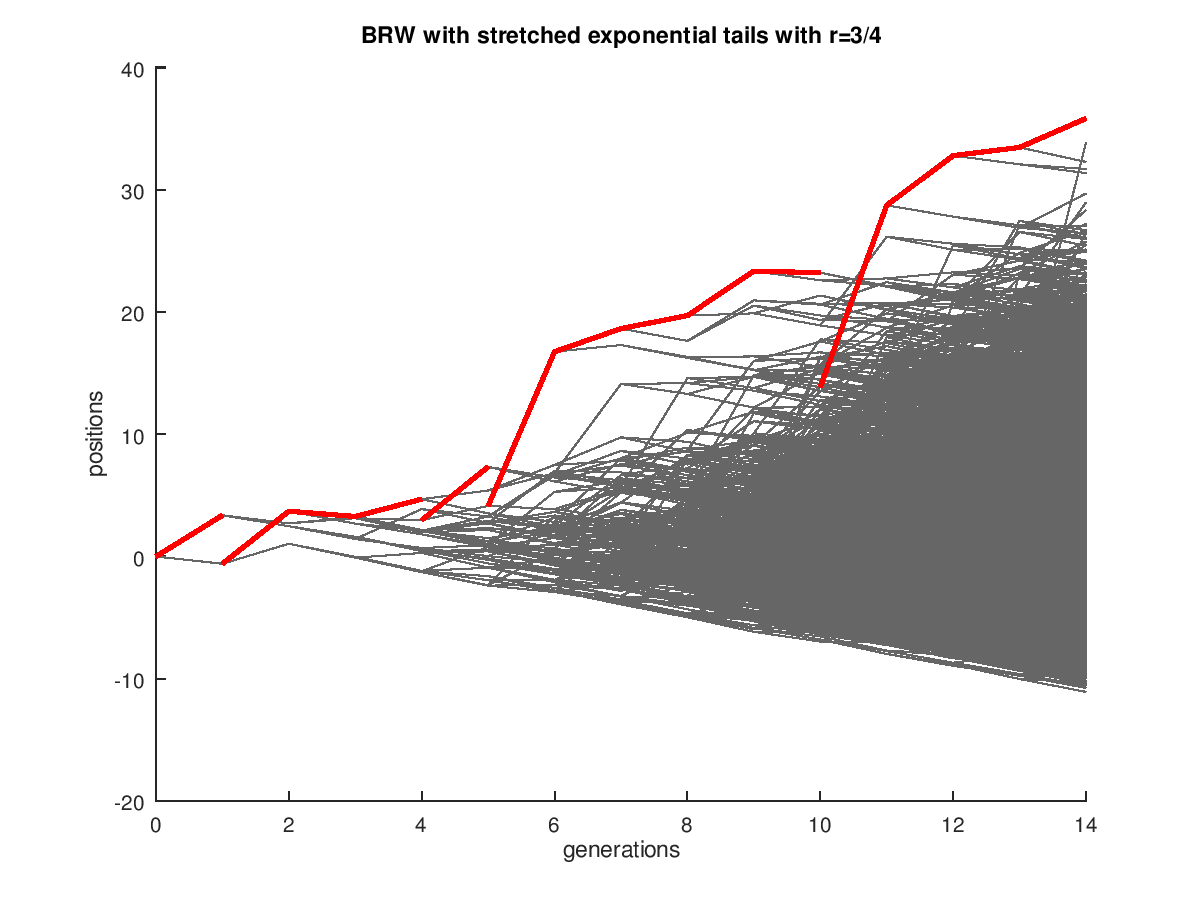}\includegraphics[width=0.5\textwidth]{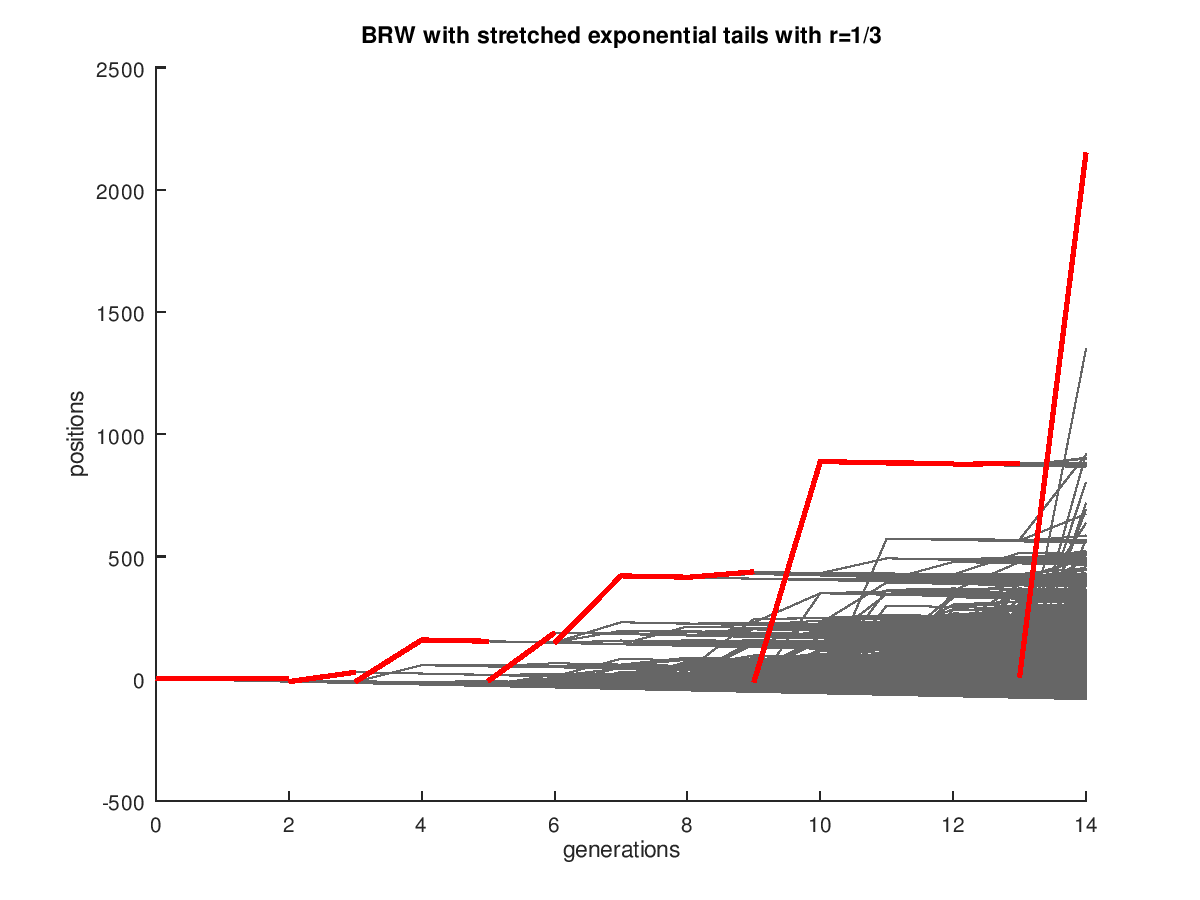}
		\caption{BRW with stretched exponential steps with $r=3/4$ (left) and $r=1/3$ (right). The red lines indicate the displacements of the rightmost particles from its place of birth.}
		\label{fig:2}
	\end{figure}
	More precisely, for this choice of $R$ we have, for $r > 2/3$,
	\begin{equation*}
		\lim_{n \to \infty }\frac{M_n -  ((\log m)/\lambda)^{1/r} n^{1/r} }{n^{ 2 - 1/r} } = \frac{r\lambda^{1/r}(\log m)^{1-1/r}}{2 }  \quad \P^*- \text{ a.s.}
	\end{equation*} 
	and for $r \leq 2/3$, with respect to $\P^*$,
	\begin{equation*}
		\frac{M_n - ((\log m)/\lambda)^{1/r} n^{1/r}}{ n^{ 1/r-1}}
	\end{equation*} 
	converges in distribution to a random shift of the Gumbel law~\cite[Theorem 3.1]{dyszewski:max:2022}.
	Here, we extend these results to distributions satisfying Assumption~\ref{as:RW}
	and prove point process convergence. 
	The limit object of $\Lambda_n$ will be described with the help of an auxiliary polynomial. However it comes into play only in the case $r\geq 2/3$. 
	Let the sequence of cumulants of $X$, $\{k_j\}_{j\leq \kappa}$ be defined recursively via $k_1=0$ and for $j\geq 1$, 
	\begin{equation}\label{eq:4:c1}
			k_{j} = \E\left[X^j\right] - \sum_{i=1}^{j-1} \binom{j-1}{ i} k_{j-i}\E\left[X^i\right]. 
	\end{equation}
	Define the truncated cumulant generating function via
	\begin{equation}\label{eq:2:truncatedcumulant}
		K(x) = \sum_{j=2}^\kappa \frac{k_j}{j!}x^j, \qquad \kappa = \left\lfloor \frac{2-r}{1-r}\right\rfloor.
	\end{equation}
	The name truncated cumulant generating function will become clear after we state Lemma~\ref{lem:4:asK} below.
	For $n \in \N$ consider a function $\Psi_n$ given for $z \in \R$ and sufficiently large $n$ via
	\begin{equation*}
		\Psi_n ( z ) = \inf_{s \in[0,1]}  \{ R(d_n+z-nK'(s)) +n(sK'(s)-K(s))  \}.
	\end{equation*}
	One can then check that $\Psi_n$ is a nondecreasing and continuous function such that for any $n \in \N$, $\Psi_n(z) \to \infty$
	as $z \to \infty$.
	Finally, since $\Psi_n(0) \leq R(d_n)$, for sufficiently large $n \in \N$ we can define $\tau_n$ to be the smallest real number solving
	\begin{equation}\label{eq:3:tau}
		\Psi(\tau_n) = R(d_n).
	\end{equation}
	By a direct calculation using the regular variation of $R$ and $R'$, one gets that for some slowly varying function $\ell$,  
	\begin{equation*}
		\tau_n = nR'(d_n)/2 + O(n^{3-2/r}\ell(n)).
	\end{equation*}
	It follows that $\tau_n$ is regularly varying with index $2-1/r$ and thus, recalling \eqref{whatisa_n}, $\tau_n = o(a_n)$ if $r< 2/3$.
	We will now describe the limiting measure. Let $\{T_k\}_{k \in \N}$ be i.i.d. with distribution given in~\eqref{eq:2:tk} and take 
	$\{\iota_k\}_{k\in \N}$ to be the points of a Poisson point process with intensity $a e^{-x} \ud x$. Define 
	a random element of $\Mcal_p$ by
	\begin{equation}\label{eq:3:LambdaLimit}
		\Lambda = \1_{\{ W>0 \}}\sum_{k=0}^\infty T_k \delta_{ \iota_k -\log(\upsilon W)},
	\end{equation}
	recalling \eqref{vdef}.
	Then $\Lambda$ is a Cox cluster process with the Laplace functional given via
	\begin{equation*}
			\E \left[ e^{ - \int f(x) \: \Lambda(\ud x)} \right] =  
			\E\left[\exp \left\{ - aW \sum_{j=0}^\infty m^{-j}  \int_{\R} \E \left[ 1- e^{-Z_jf(x)}  \right]  e^{-x}\ud x  \right\}\right]
	\end{equation*} 
	for $f \in \Ccal^+_K$. Note that even though $W$ is the limiting random variable of $Z_j/m^j$, the $Z_j$'s present in the above formula
	can be taken independent from $W$ due to the presence of the expectation in the exponent. If we denote by 
	$z \colon [0,1] \to [0,1]$ the probability generating function of the underlying reproduction law 
	\begin{equation*}
		z(s) = \E\left[ s^{Z_1} \right]
	\end{equation*} 
	then a standard application of the branching property yields that the probability generating function of $Z_n$ is an $n$-fold composition
	\begin{equation*}
		\E\left[ s^{Z_n} \right] = z^{(\circ n)}(s) = z\circ z \circ \ldots \circ z(s), \qquad s\in [0,1].
	\end{equation*}
	The formula for the Laplace functional of $\Lambda$ can be thus recast as 
	\begin{equation*}
		\E \left[ e^{ - \int f(x) \: \Lambda(\ud x)} \right] =  
		\E\left[\exp \left\{ - aW \sum_{j=0}^\infty m^{-j}  \int_{\R} \left( 1-  z^{(\circ j)}\left( e^{-f(x)} \right)  \right)  e^{-x}\ud x  \right\}\right].
	\end{equation*} 

	\begin{theorem}\label{thm:2:main}
		Suppose that Assumptions~\ref{as:BP} and~\ref{as:RW} are in force. Let $d_n$ and $a_n$ be given by \eqref{eq:3:dnDef} 
		and \eqref{eq:3:anDef} respectively and take $b_n = d_n +\tau_n$, 
		where $\tau_n$ is given in~\eqref{eq:3:tau}.
		Then 
		\begin{equation*}
			\scale_{a_n^{-1}}\shift_{-b_n}\Lambda_n  \Rightarrow \Lambda \quad \mbox{in } \Mcal_p,
		\end{equation*}
 		with $\Lambda$ given by~\eqref{eq:3:LambdaLimit}.
	\end{theorem}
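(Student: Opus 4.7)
The plan is to prove convergence via the Laplace functional method, using a three-stage reduction: trim, decouple, identify. The trimming step restricts attention to particles whose ancestral line carries exactly one anomalously large displacement. Fix a threshold $t_n$ with $t_n\to\infty$ and $t_n\ll d_n$, chosen so that $n\P[X>t_n]\to\infty$ while $n^2\P[X>t_n]^2\ll m^{-n}$ (feasible under Assumption~\ref{as:RW}, e.g.\ $t_n = d_n/\log n$). Classifying each $x\in\mathbb{T}_n$ by the number $J(x)$ of ancestors $y\in(\emptyset,x]$ with $X_y>t_n$ and writing $\Lambda_n^{(1)}=\sum_{|x|=n,\,J(x)=1}\varepsilon_{V(x)}$, a first-moment bound via the many-to-one identity together with precise large deviations of Section~\ref{sec:LD} applied to the truncated walk yields $\E^*[\#\{|x|=n: J(x)=0, V(x)\geq b_n - Ma_n\}]\to 0$ for every $M$, while for $J(x)\geq 2$ the contribution is bounded directly by $\binom{n}{J(x)}\P[X>t_n]^{J(x)}= o(1)$. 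Hence $\scale_{a_n^{-1}}\shift_{-b_n}\Lambda_n$ and $\scale_{a_n^{-1}}\shift_{-b_n}\Lambda_n^{(1)}$ share their weak limit in $M_p(\bR)$.

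For the decoupling, when $J(x)=1$ with big jump at ancestor $y_k$ (of level $|y_k|=k$), I decompose
\begin{equation*}
V(x) = V(y_{k-1}) + X_{y_k} + \bigl(V(x) - V(y_k)\bigr),
\end{equation*}
with the three summands independent and the outer two being sums of truncated displacements. The aggregate shift contributed by these truncated pieces, when the full walk is conditioned to reach $b_n$, is precisely the optimizer in the saddle-point infimum defining $\Psi_n$, and it is absorbed into $b_n = d_n + \tau_n$. The technical input from Section~\ref{sec:LD} upgrades the logarithmic asymptotic~\eqref{logas} to the uniform sharp statement
\begin{equation*}
m^n\P\bigl[S_n > b_n + a_n z\bigr]\longrightarrow a e^{-z},
\end{equation*}
valid for all $r\in(0,1)$, including $r\geq 2/3$ where the naive ``$n\P[X>\cdot]$'' estimate is no longer tight. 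The conditional law of $a_n^{-1}(V(x)-b_n)$ on a good particle then has a Gumbel-type shape through the local linearization $R(d_n+a_n z) - R(d_n)\to z$, while the within-subtree positions sit within $o(a_n)$ of $X_{y_k}$ by the moment conditions in Assumption~\ref{as:RW}, so each subtree collapses to a single multiple atom at the scale $a_n$.

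To assemble the limit, I compute the Laplace functional of $\scale_{a_n^{-1}}\shift_{-b_n}\Lambda_n^{(1)}$. Organizing by the level $k$ of the big-jump ancestor and using independence of distinct subtrees gives, for $f\in C_K^+(\bR)$, a product over levels $k$ and ancestors $|y|=k$ whose factors involve $\E\bigl[1 - e^{-Z_{n-k}f(a_n^{-1}(X_y-b_n))}\1_{X_y>t_n}\bigr]$. Taking logarithms, using $m^{-k}Z_k\to W$, and invoking the sharp tail asymptotic above, this converges to
\begin{equation*}
aW\sum_{j\geq 0} m^{-j}\int_\R\E\bigl[1 - e^{-Z_j f(x)}\bigr] e^{-x}\ud x,
\end{equation*}
which is exactly the Laplace functional of $\Lambda$ displayed in the excerpt. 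Equivalently, in the $T_k$-representation, mixing $Z_{n-k}$ over $k$ with weights $m^{-k}/\upsilon$ yields the law of $T_1$ from~\eqref{eq:2:tk}, and the shift by $-\log(\upsilon W)$ converts the random intensity $a\upsilon W e^{-x}\ud x$ into $a e^{-x}\ud x$.

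The main obstacle is the sharp large deviation that underwrites the decoupling. For $r<2/3$ one is in the one-big-jump regime and $\P[S_n>b_n]\sim n\P[X>b_n]$ suffices, but for $r\geq 2/3$ a genuine saddle-point expansion must combine the Weibull tail with a cumulant expansion of the truncated walk of order $\kappa=\lfloor(2-r)/(1-r)\rfloor$; this is exactly what produces $\Psi_n$ and $\tau_n$, and making the resulting asymptotics uniform in $z$ on compact sets, with errors good enough to survive substitution into the Laplace functional, is the technical heart of the argument. A subsidiary but nontrivial issue is the cluster collapse: the within-subtree fluctuations around the big jump must be $O(n^{1/2}\vee n^{2-1/r}) = o(a_n)$ once the $\tau_n$ correction has been absorbed.
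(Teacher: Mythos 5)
The overall architecture you describe---trim to a single big jump, decouple the pre-jump walk from the jump, pass to Laplace functionals---is the same as the paper's (Sections~\ref{sec:trimming}--\ref{sec:lim}), but several concrete steps in your sketch are wrong and would derail the proof.

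Your threshold conditions are mutually inconsistent: $n\P[X>t_n]\to\infty$ implies $(n\P[X>t_n])^2\to\infty$, whereas $n^2\P[X>t_n]^2\ll m^{-n}$ says the same quantity is $o(m^{-n})\to 0$. Your example $t_n=d_n/\log n$ in fact satisfies neither. Since $R(t_n)\sim(\log n)^{-r}R(d_n)=(\log n)^{-r}n\log m$, one has $m^n\,n^2\,\P[X>t_n]^2=\exp\{n\log m\,(1-2(\log n)^{-r})(1+o(1))\}\to\infty$, so two-big-jump particles are not negligible at that scale. The threshold must be comparable to $d_n$; the paper takes $\delta_n=\delta d_n$ with $\delta\in(2^{-r},1)$ exactly so that $2R(\delta_n)>R(d_n)$ and the two-jump event dies.

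Your key tail asymptotic, $m^n\P[S_n>b_n+a_nz]\to ae^{-z}$, is off by a factor of $n$. Theorem~\ref{thm:5:LD} gives $\P[S_n>x_n]\sim ne^{-I(x_n)(1+o(1))}$, and $I(b_n+a_nz)=n\log m+z+o(1)$, so $m^n\P[S_n>b_n+a_nz]$ is of order $ne^{-z}\to\infty$. What actually converges (as in the proof of Proposition~\ref{prop:6:main}) is $m^n\P[S_{n-2k_n}+X>b_n+a_nz,\ N_{n-2k_n}\leq\delta_n,\ X>y_n]\to ae^{-z}$, with the big jump pinned to a designated coordinate; the combinatorial factor of $n$ for the location of the jump is absorbed through the sum over generations in the genealogical factorization, not through the tail estimate.

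In the Laplace factor $\E\bigl[1-e^{-Z_{n-k}f(a_n^{-1}(X_y-b_n))}\1_{\{X_y>t_n\}}\bigr]$ you replace the particle position by the big jump alone, discarding the residual walk. For $r>2/3$ this is not legitimate: the residual walk contributes not only the deterministic $\tau_n$ (which you absorb into $b_n$) but also fluctuations of order $\sqrt{n}\gg a_n$, since $a_n$ is regularly varying with index $1/r-1<1/2$; thus $V(x)-X_y$ is not concentrated at scale $a_n$. Your claim that the fluctuations are $O(n^{1/2}\vee n^{2-1/r})=o(a_n)$ is false for $r>2/3$ for the same reason. The Gumbel shape emerges from the interaction between these residual fluctuations and the stretched-exponential tail of the big jump, which is exactly what the saddle-point analysis behind Theorem~\ref{thm:5:LD} encodes; fixing the residual at its typical value short-circuits the argument. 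Finally, the factorization of the Laplace functional over subtrees requires knowing that the big jump occurs in the last $k_n\ll n$ generations and that distinct extreme particles have essentially disjoint ancestral lines past generation $k_n$ (Lemma~\ref{lem:4:Dn} and the event $A_n$); your sketch does not establish this, and without it the claimed independence across subtrees is unjustified.
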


	Note that for any $x \in \R$, $\P[M_n \leq x] = \P[\Lambda_n(x, +\infty]=\emptyset]$.
	As a by-product of Theorem~\ref{thm:2:main} we obtain a new result for $M_n$ which complements the results 
	of~\cite[Theorem 3.1]{dyszewski:max:2022}. 

	\begin{corollary}\label{cor:main}
		Let Assumptions~\ref{as:BP} and~\ref{as:RW} hold. Then for $M_n = \max_{|w|=n} V(w)$ and $b_n$ and $a_n$ as in the statement of 
		Theorem~\ref{thm:2:main},
		\begin{equation*}
			\frac{M_n - b_n  }{ a_n} \overset{d}\to V
		\end{equation*}
		with respect to $\P$, where the random variable $V \in [-\infty, \infty)$ is distributed according to
		\begin{equation*}
			\P[V \leq x] = \E \left[ \exp \left\{ -  a\upsilon W e^{-x} \right\} \right] .
		\end{equation*}
	\end{corollary}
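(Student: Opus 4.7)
The plan is to deduce Corollary~\ref{cor:main} from Theorem~\ref{thm:2:main} by applying the continuous-mapping theorem to the functional $\eta \mapsto \eta((x,+\infty])$ on $M_p(\bR)$. First I would observe that, since $a_n>0$,
\begin{equation*}
    \left\{\frac{M_n - b_n}{a_n} \leq x\right\} = \left\{\scale_{a_n^{-1}}\shift_{-b_n}\Lambda_n\bigl((x,+\infty]\bigr) = 0\right\} \qquad\text{for every } x\in\R.
\end{equation*}
Hence it suffices to prove that $\scale_{a_n^{-1}}\shift_{-b_n}\Lambda_n((x,+\infty])\Rightarrow \Lambda((x,+\infty])$ as $\N$-valued random variables, and to identify $\P^*[\Lambda((x,+\infty])=0]$.

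For the convergence, note that $(x,+\infty]$ is a relatively compact subset of $\bR$ with topological boundary $\{x\}$ (thanks to the compactification of $+\infty$ described in Section~\ref{sec:prel}), so the evaluation functional is vaguely continuous at every $\eta\in M_p(\bR)$ with $\eta(\{x\})=0$. To verify that $\Lambda$ almost surely belongs to this set, I would condition on $W$: the atoms of $\Lambda$ sit at positions $\iota_k - \log(\upsilon W)$, where $\{\iota_k\}$ is a Poisson process with atomless intensity $ae^{-y}\ud y$, so the joint law of these positions has no atom at the deterministic point $x$, giving $\P^*[\Lambda(\{x\})=0]=1$. The continuous mapping theorem then transfers the weak convergence provided by Theorem~\ref{thm:2:main}.

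To compute the limit distribution, I would plug the test functions $f_t = t\,\1_{(x,+\infty]}$ into the Laplace functional of $\Lambda$ stated in Section~\ref{sec:main} and let $t\to\infty$. Since $\E[1-e^{-Z_j t}]\uparrow \P[Z_j>0]$ (with $Z_0\equiv 1$), monotone convergence yields
\begin{equation*}
    \P^*\bigl[\Lambda((x,+\infty])=0\bigr] = \E^*\!\left[\exp\!\left\{-aW\sum_{j=0}^{\infty} m^{-j}\P[Z_j>0]\int_{x}^{+\infty} e^{-y}\ud y\right\}\right] = \E^*\!\left[\exp(-a\upsilon W e^{-x})\right],
\end{equation*}
using $\upsilon=\sum_{j\ge 0} m^{-j}\P[Z_j>0]$. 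The distribution function $x\mapsto \E^*[\exp(-a\upsilon W e^{-x})]$ is continuous on $\R$ by dominated convergence (it is the Laplace transform of the $\P^*$-a.s.\ strictly positive random variable $a\upsilon W$ evaluated at $e^{-x}$), so the convergence extends to every $x\in\R$ and the asserted limit law for $V$ is obtained.

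I do not anticipate a substantial obstacle: this is a standard reduction from point process convergence to an extremal statistic. The only subtle point is to handle the topology on $\bR$ carefully, so that neither the event $\{M_n\le b_n+a_n x\}$ loses information to mass escaping to $+\infty$ nor the evaluation map fails to be continuous at $\Lambda$; both issues are resolved by the compactification convention of Section~\ref{sec:prel} and by the Poissonian structure of $\Lambda$ respectively.
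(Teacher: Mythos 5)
Your proof is correct and follows the standard route that the paper itself presumably has in mind but does not spell out (Corollary~\ref{cor:main} is stated only as a ``by-product'' of Theorem~\ref{thm:2:main}, with no written proof). The reduction $\{M_n\le b_n+a_nx\}=\{\scale_{a_n^{-1}}\shift_{-b_n}\Lambda_n((x,+\infty])=0\}$ is the right observation; $(x,+\infty]$ is relatively compact in $\bR$ with boundary $\{x\}$, and the Poissonian structure of $\Lambda$ (conditionally on $W$, atoms placed by a Poisson process with an absolutely continuous intensity) gives $\Lambda(\{x\})=0$ a.s., so the continuity criterion for vague convergence applies and the evaluation passes to the limit; since $\{0\}$ is a continuity set in $\N_0$ the distribution function converges pointwise. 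The computation of $\P^*[\Lambda((x,+\infty])=0]$ via the Laplace functional with $f_t=t\,\1_{(x,+\infty]}$, $t\uparrow\infty$, and $\E[1-e^{-Z_jt}]\uparrow\P[Z_j>0]$, $\sum_j m^{-j}\P[Z_j>0]=\upsilon$, is exactly right and gives $\E^*[\exp\{-a\upsilon We^{-x}\}]$.

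Two small remarks. First, the Laplace functional in Section~\ref{sec:main} is stated only for $f\in C^+_K(\bR)$, so plugging in the discontinuous $t\1_{(x,+\infty]}$ strictly speaking requires an approximation from above and below by continuous compactly supported functions, or the standard remark that the identity extends to bounded measurable $f$ with support bounded to the left; this is routine but worth a sentence. Second, one can bypass the Laplace functional entirely and compute directly from the Cox representation, $\P^*[\Lambda((x,+\infty])=0\mid W]=\exp\{-a\upsilon W e^{-x}\}$ since, conditionally on $W$, the void probability of the shifted Poisson process over $(x,+\infty]$ is the exponential of minus its intensity mass there (the multiplicities $T_k\ge1$ a.s.\ do not affect the void event); this is a touch more elementary than the limit $t\to\infty$ in the Laplace functional, but both are fine.
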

Note that $V$ can have an atom at $-\infty$ if $W$ has an atom at $0$.
	The point process convergence allows to treat upper order statistics of the positions of particles in generation $n$. For $k \in \N$, let
	$M_n^{(k)}$ denote the $k$-th order statistic of $\{V(w)\}_{w \in \mathbb{T}_n}$. That is for any $n \in \N$
	$\{M_n^{(k)}\}_{k \leq \# \mathbb{T}_n}$ is the non-decreasing enumeration of $\{V(w)\}_{w \in \mathbb{T}_n}$. Then we have
	\begin{equation*}
		\P\left[ M_n^{(k)} \leq b_n+a_nx \right] = \P[ \scale_{1/a_n}\shift_{-b_n}\Lambda_n(x, +\infty] \leq k-1 ] \to 
		\P[\Lambda(x, +\infty]\leq k-1].
	\end{equation*}
	Following~\cite{bhattacharya:2017:point} 
	we can express the limiting quantity on the right hand side in the following way. Consider a marked point process
	\begin{equation*}
		\tilde\Lambda = \sum_{k=0}^\infty \delta_{T_k} \otimes\delta_{ \iota_k -\log(\upsilon W)},
	\end{equation*}
	which conditioned on $W$ is a Poisson point process with intensity 
	\begin{equation*}
		a\upsilon W\P[T_1 \in \ud x] \otimes e^{-y}\ud y.
	\end{equation*}	
	Denote by $\pi$ a generic partition of some given integer, say $l$, of the form $l=i_1y_1+\ldots + i_{|\pi|}y_{|\pi|}$, where
	each $i_j = i_j(\pi)$ repeats $y_j=y_j(\pi)$ times in the partition and the $i_j$ are increasing.
	Here $|\pi|$ denotes the length of the partition, i.e. the number of its distinct elements.
	Write $\Pi_l$ for the collection of all possible partitions of $l \in \N$.
	Then $\P[\Lambda(x, +\infty]\leq k-1]$ is equal to
	\begin{multline*}
		\sum_{l=1}^{k-1} \sum_{\pi \in \Pi_l} \P \left[ \bigcap_{j=1}^{|\pi|} \{ \tilde \Lambda (\{ i_j\}\times (x +\infty] = y_j \} \right]
		=\\ \sum_{l=1}^{k-1} \sum_{\pi \in \Pi_l } 
		\E \left[ \prod_{j=1}^{|\pi|} (a \upsilon W e^{-x} \P[T_1=i_j])^{y_j} \frac{1}{y_j!} e^{-a \upsilon W  e^{-x}\P[T_1=i_j]} \right].
	\end{multline*}
	The right hand side is therefore the limit of $\P[M_n^{(k)} \leq b_n+a_nx]$. 

	\begin{remark}
		The limiting point process~\eqref{eq:3:LambdaLimit} falls into the class of randomly shifted decorated Poisson point processes. 
		Comparing the Definition~\ref{def:2:sdppp} with~\eqref{eq:3:LambdaLimit} one readily sees that
		\begin{equation*}
			\Lambda \sim {\rm SDPPP} \left( a e^{-x} \ud x, T_1 \delta_0,- \log(\upsilon W) \right).
		\end{equation*}
	\end{remark}

	\begin{remark}
		The limiting point process $\Lambda$ enjoys the superposability~\cite[subsection 3.4]{brunet2011branching}: the distances from the 
		rightmost position of a union of realizations of $\Lambda$ have the same distribution as those of a 
		single realization. 
		To state this in a more rigorous way let 
		$\Lambda^{(i)} = \sum_{k=0}^\infty T_k^{(i)} \delta_{ \ell_k^{(i)} -\log(\upsilon W_i)}$
		denote i.i.d. copies of $\Lambda$. Since the martingale limit $W$ is known to satisfy
		\begin{equation*}
			\sum_{i=1}^{Z_1} W_i \stackrel{d}{=} mW
		\end{equation*}
		where $W_1, W_1, \ldots $ are i.i.d. copies of $W$ independent of $Z_1$, one sees from studying the corresponding Laplace functionals that
		\begin{equation*}
			\sum_{i=1}^{Z_1} \Lambda_i \stackrel{d}{=} \shift_{ \log m}\Lambda.
		\end{equation*}
		Now consider a mapping $\tip$ on $\Mcal_p$ given by 
		\begin{equation*}
			\tip \sum_{i}\delta_{x_j} = \left\{ \begin{array}{lr}  \sum_{i}\delta_{x_i - \max_jx_j} & \max_jx_j <\infty \\ 
			o & \max_jx_j=\infty, \end{array} \right.
		\end{equation*}
		where $o$ denotes the null measure.
		Then $\tip \eta$ is the point process $\eta$ seen from the rightmost particle. 
		Since $\tip \circ \shift_x = \tip$ for any $x \in \R$, one readily sees that
		\begin{equation*}
			\tip\sum_{i=1}^{Z_1} \Lambda_i \stackrel{d}{=} \tip \Lambda.
		\end{equation*}
	\end{remark}
	\begin{remark}
		One can show in a similar fashion that $\Lambda$ is exponentially stable provided that $W=1$ a.s. which is the case when the branching is deterministic, i.e.
		$\P[Z_1=k]=1$ for some $k\geq 2$. More precisely $\Lambda$ is exponentially $1$-stable if for any choice of $\alpha, \beta \in \R$
		such that $e^\alpha+e^\beta=1$ we have
		\begin{equation*}
			\shift_\alpha \Lambda + \shift_\beta \Lambda' \stackrel{d}{=} \Lambda, 
		\end{equation*}
		where $\Lambda'$ is an independent copy of $\Lambda$. One can check that this is true since in the case $W=1$ a.s. one has
		\begin{equation*}
			\E \left[ e^{- \int f(x) \shift_\alpha\Lambda(\ud x)} \right] 
			= \exp\left\{-a (k-1)e^{\alpha} \int_\R\sum_{j=0}^\infty k^{-j-1}\left(1-e^{-k^jf(x)}\right)  e^{-x}\ud x \right\}
		\end{equation*}
		for any $\alpha\in \R$. It has been conjectured in~\cite[subsection 6.1]{brunet2011branching} and proved in~\cite{maillard2013note} that a 
		exponentially $1$-stable point process is a decorated exponential Poisson process i.e. with intensity of the form $\const e^{-x}\ud x$ and a certain
		decoration. This can be checked quite easily in our context since the aforementioned $SDPPP$ representation of $\Lambda$ boils down to a $DPPP$
		representation
		\begin{equation*}
			\Lambda = \sum_{k} \shift_{\iota_k} D_k, \qquad D_k = T_k\delta_{-\log \upsilon}.
		\end{equation*}
with $\upsilon$ defined in \eqref{vdef}.
	\end{remark}

\section{Trimming lemmas}\label{sec:trimming}
	In our first steps towards the proof of Theorem~\ref{thm:2:main} we will show which particles in $\mathbb{T}_n$ contribute to the extremes of $\Lambda_n$. 
	To facilitate this we will
	use several deviation estimates for sums of i.i.d. random variables with stretched exponential tails. We will often use the following expansion for $R$,
	\begin{equation*}%\label{eq:4:rpower2}
		R\left( d_n + x\right) =R(d_n) + R'(d_n)x+x^2R''(\xi_{d_n,d_n+x})/2 
	\end{equation*}
	for $x \in \R$ and some $\xi_{d_n,d_n+x}$ that lies between $d_n$ and $d_n+x$.	

	The first few steps of our arguments concentrate on pinpointing the particles that contribute to the point process which is the limit of 
	$\scale_{a_n^{-1}}\shift_{-b_n}\Lambda_n $. 
	As we will see the positions of those particles will be asymptotically independent. 
       We will partition $\mathbb{T}_n$ into four classes of particles and show which class contributes to the asymptotics of $\scale_{a_n^{-1}}\shift_{-b_n}\Lambda_n $.  
       The first one consists of those particles with no big jumps along their ancestral line, i.e.
	\begin{equation*}
		\mathcal{A}_n = \left\{w \in \mathbb{T}_n\: :\: \max_{ z \in (\emptyset, w]}  X_z \leq \varepsilon_n \right\},  
	\end{equation*} 
	where $\varepsilon_n =\varepsilon d_n$ for some fixed $\varepsilon \in \left(2^{-r},1\right)$. 
	The positions of particles from $\mathcal{A}_n$ consist of individual displacements of 
	typical particles and are therefore well controlled. 
	The point process given by the positions of the particles from $\mathcal{A}_n$ will be denoted by
	\begin{equation*}
		\Lambda_n^{\mathcal{A}}=\sum_{w \in \mathcal{A}_n} \delta_{ a_n^{-1}(V(w) - b_n)}.
	\end{equation*}
	As we will see in Lemma~\ref{lem:3:an} below, the contribution of particles from $\mathcal{A}_n$ is negligible. 
	To control the contribution of $\Lambda_n^{\mathcal{A}}$ we will need a deviation results for typical particles. For $L>0$ take 
	$\P_{L}$ to be a probability measure such that the $X_k$'s are i.i.d. under $\P_{L}$ with distribution given by
	\begin{align*}
		\P_{L}\left[ X_k \leq t \right] & = \P\left[\:X_k\leq t\: \left| \: X_k\leq L \: \right. \right].
	\end{align*}	
	One can use classical large deviation techniques to control the sum of the $X_k$'s under $\P_L$. 
	More precisely one can exploit the exponential change of measure and use the 
	cumulant generating function of the truncated variables
	\begin{equation*}
		K_L(s) =  \log \E_{L} \left[  \exp\{ sX_1\} \right], \quad s >0,
	\end{equation*}
	where $\E_{L}$ denotes the expectation corresponding to $\P_{L}$. In order to be able to control the particles from $\mathcal{A}_n$ we need to take 
	$L=\varepsilon_n$ in the above construction. 
	We will first check the asymptotic expansion of 
	$K_L=K_{\varepsilon_n}$ near $0$. For functions $f$, we write $f^{(0)}$, $f^{(1)}$ and $f^{(2)}$ for $f$, $f'$ and $f''$ respectively. 
	Recall the truncated cumulant generating function $K$ given via~\eqref{eq:2:truncatedcumulant}.

	\begin{lemma}\label{lem:4:asK} 
		Let Assumption~\ref{as:RW} be in force and let $\varepsilon_n = \varepsilon d_n$ for some $\varepsilon\in (2^{-r},1)$. 
		Let $d\in (0,1)$ be such that $R(x)x^{-d}$ is decreasing. Then for $\chi \in (0, 1/d)$ and $i \in \{0, 1,2\}$, 
		\begin{equation*}
			\sup_{0 \leq s \leq  \chi R'(d_n) } \left| K_{\varepsilon_n}^{(i)}(s) - K^{(i)}(s) \right| = o \left( n^{-1}\right).
		\end{equation*}
	\end{lemma}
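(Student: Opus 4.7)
The plan is to view $K$ as the formal cumulant generating function of $X$ truncated at order $\kappa$, i.e., the $\kappa$-th partial sum in $s$ of $\log \E[e^{sX}]$ (well-defined as a formal series via the moment-cumulant recursion~\eqref{eq:4:c1}), and compare it with $K_{\delta_n}$ by Taylor-expanding $e^{sX}$ to order $\kappa$ under the truncated measure. First I would write
\begin{equation*}
  \E_{\delta_n}[e^{sX}] = \frac{1}{\P[X \leq \delta_n]} \E\left[ e^{sX}\1_{\{X \leq \delta_n\}}\right] = \tilde E(s) + \mathcal{E}_1(s) + \mathcal{E}_2(s),
\end{equation*}
where $\tilde E(s) = \sum_{j=0}^{\kappa} s^j \E[X^j]/j!$ is the polynomial built from the true moments, $\mathcal{E}_1$ collects the moment corrections $\sum_{j=0}^\kappa s^j(\E_{\delta_n}[X^j] - \E[X^j])/j!$, and $\mathcal{E}_2$ is the Taylor remainder $\E_{\delta_n}[R_\kappa(sX)]$ with the sharp bounds $|R_\kappa(t)|\leq |t|^{\kappa+1}/(\kappa+1)!$ for $t \leq 0$ and $|R_\kappa(t)|\leq t^{\kappa+1}e^{t}/(\kappa+1)!$ for $t \geq 0$.

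For $\mathcal{E}_1$, the difference $\E[X^j]-\E[X^j\1_{\{X\leq \delta_n\}}] = \E[X^j \1_{\{X>\delta_n\}}]$ is evaluated through~\eqref{eq:4:intbyparts} and the tail bound $\P[X>x]\leq C e^{-R(x)}$, giving a quantity of order $\delta_n^{j-1}\P[X>\delta_n]/R'(\delta_n)$; since $\P[X>\delta_n] = m^{-(\delta^r+o(1))n}$ decays exponentially in $n$ while $s^j \leq (\chi R'(d_n))^j$ is only polynomial, each term is $o(n^{-1})$ uniformly. For $\mathcal{E}_2$ I would split according to the sign of $X$. On $\{0 < X \leq \delta_n\}$, integration by parts applied to $\psi(x) = x^{\kappa+1}e^{sx}$ reduces the estimate to $\int_0^{\delta_n}(x^\kappa + sx^{\kappa+1})e^{sx}\P[X>x]\,dx$; the assumption that $R(x)x^{-d}$ is decreasing combined with $\chi < 1/d$ gives $sx \leq \chi R'(d_n)x \leq \chi d\, R(x) < R(x)$ on $(x_0, d_n]$ (using $R'(d_n)\sim rR(d_n)/d_n$ and $d\geq r$), so $e^{sx}\P[X>x]\leq C e^{-(1-\chi d)R(x)}$ and the integral is $O(1)$. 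The prefactor $s^{\kappa+1}\leq (\chi R'(d_n))^{\kappa+1}$ is regularly varying in $n$ of index $-(1-r)(\kappa+1)/r$, and the choice $\kappa = \lfloor (2-r)/(1-r)\rfloor$ forces $(\kappa+1)(1-r)/r > 1$, so this contribution is $o(n^{-1})$. On $\{X<0\}$, the bound $|R_\kappa(sX)|\leq (s|X|)^{\kappa+1}/(\kappa+1)!$ cannot be used directly since $\E[|X|^{\kappa+1}]$ may be infinite, so I split at a level $-M_n$: on $\{-M_n \leq X \leq 0\}$ the bound gives $s^{\kappa+1}M_n\, \E[|X|^\kappa]/(\kappa+1)! = o(n^{-1})$ for $M_n$ growing slowly, and on $\{X<-M_n\}$ I use the crude estimate $|R_\kappa(sX)|\leq C(1+s^\kappa|X|^\kappa)$ together with Markov's inequality ($\P[X<-M_n]\leq M_n^{-\kappa}\E[|X|^\kappa]$) and $(R'(d_n))^\kappa\E[|X|^\kappa \1_{\{X<-M_n\}}] = o(n^{-1})$ by dominated convergence.

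Combining these, $\E_{\delta_n}[e^{sX}]=\tilde E(s)(1+o(n^{-1}))$ uniformly on $[0,\chi R'(d_n)]$. Taking logarithms, using $\log(1+\epsilon_n)=O(\epsilon_n)$ and the fact that on this range the analytic function $\log \tilde E(s)$ agrees with its $\kappa$-th Taylor polynomial, which by the cumulant-moment identity~\eqref{eq:4:c1} is exactly $K(s)$, up to an error $O(s^{\kappa+1})=o(n^{-1})$, yields the conclusion for $i=0$. Together with $\log \P[X\leq \delta_n] = O(e^{-R(\delta_n)})$, which is negligible, this closes the case. For $i=1,2$, differentiating $\E_{\delta_n}[e^{sX}]$ twice under the expectation reduces the problem to identical Taylor-plus-remainder estimates for $\E[X^k e^{sX}\1_{\{X\leq \delta_n\}}]$ with $k=1,2$, where the extra factors $|X|^k$ only lower the order of the remainder from $\kappa$ to $\kappa-k$ and are absorbed since $\E[|X|^k]<\infty$; the chain and quotient rules then produce the corresponding error for $K_{\delta_n}^{(i)} - K^{(i)}$. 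The main obstacle is unquestionably the negative-$X$ part of $\mathcal{E}_2$, because the natural Taylor-remainder bound invokes the $(\kappa+1)$-st absolute moment which may not exist; the $M_n$-split argument together with the precise algebraic inequality $(\kappa+1)(1-r)/r>1$ forced by the definition of $\kappa$ is what makes the uniform $o(n^{-1})$ rate hold.
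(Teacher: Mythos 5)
Your proof follows the same overall strategy as the paper's: Taylor-expand the truncated moment generating function $F_{\delta_n}(s)=\E_{\delta_n}[e^{sX}]$ to order $\kappa$, pass to logarithms, and identify the resulting polynomial with $K$ through the cumulant--moment recursion~\eqref{eq:4:c1}, with all error terms uniform on $[0,\chi R'(d_n)]$. Where you differ is the form of the Taylor remainder and its control on $\{X<0\}$. The paper uses the integral remainder $\int_0^s\frac{(s-t)^\kappa}{(\kappa+1)!}F_{\delta_n}^{(\kappa+1)}(t)\,\ud t$ and quotes Lemma~\ref{lem:a:Funiform}, which is stated under the hypothesis $\E[|X|^{k}]<\infty$ for $k=\kappa+1$ --- a condition Assumption~\ref{as:RW} does not supply, since it guarantees moments only up to order $\kappa=\lfloor(2-r)/(1-r)\rfloor$. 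You use the pointwise Lagrange remainder, split by the sign of $X$, and on $\{X<0\}$ introduce the level $-M_n$ so that only $\E[|X|^\kappa]$ is ever invoked. Your observation that the window $n^{1/\kappa}\ll M_n\ll n^{(\kappa+1)(1-r)/r-1}$ is nonempty exactly because $\kappa>r/(1-r)$, together with the algebraic fact $(\kappa+1)(1-r)/r>1$, is correct and is the crux of why the argument closes. On the positive half-line your bound $sx\le\chi d\,R(x)$ for $x\le d_n$ is right, though the justification should be that $R(x)x^{-d}$ decreasing implies both $R'(d_n)\le d\,R(d_n)/d_n$ and $R(x)/x$ nonincreasing, rather than the asymptotic identity $R'(d_n)\sim rR(d_n)/d_n$ (which gives only an asymptotic, not a pointwise, inequality). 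In short, for $i=0$ your route is sound and is arguably more careful than the paper on the negative tail.

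The weak spot is your one-sentence treatment of $i=1,2$. The statement that the extra factor $|X|^k$ merely ``lowers the remainder order from $\kappa$ to $\kappa-k$ and is absorbed'' is misleading: on $\{X<0\}$ the remainder for $F_{\delta_n}^{(i)}$ is still bounded by $s^{\kappa-i+1}|X|^{\kappa+1}/(\kappa-i+1)!$, so the $(\kappa+1)$-st absolute moment appears again and the $M_n$-split must be redone with the smaller prefactor $s^{\kappa-i+1}$. For $i=2$ this still works on $\{-M_n\le X<0\}$ because $\kappa-1>r/(1-r)$, i.e.\ $(\kappa-1)(1-r)/r>1$; but the crude-bound contribution on $\{X<-M_n\}$, which is of the form $s^{\kappa-2}\E[|X|^\kappa\1_{\{X<-M_n\}}]$, has prefactor of regular-variation index $-(\kappa-2)(1-r)/r\ge -1$ (since $\kappa\le(2-r)/(1-r)$), so multiplying by $n$ does not vanish automatically and one cannot rely on unquantified dominated convergence. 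You need either a sharper remainder bound such as $|R_{\kappa-2}(t)|\le C\min(|t|^{\kappa-1},|t|^{\kappa-2})$ on $t\le 0$ combined with a finer choice of $M_n$, or a bootstrapping argument that deduces the $i=1,2$ estimates from the $i=0$ one. This gap is shared by the paper's own one-line justification of \eqref{eq:a:lasttt}, but in a complete write-up it would need to be filled.
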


	The proof of Lemma~\ref{lem:4:asK} is quite standard and utilizes the fact that the $k_m$'s obey a~similar recursive formula~\eqref{eq:4:c1} as the cumulants 
	of $X_1$ under $\P_{\varepsilon_n}$. 
	The details are given in the appendix.  
	One of the consequences of Lemma~\ref{lem:4:asK} is the fact that since $K(s) \sim s^2/2$ as $s \to 0^+$ we have
	\begin{equation*}
		nK_{\varepsilon_n}(s) = (1+o(1))ns^2/2,
	\end{equation*}
	as $n\to \infty$ and $s \to 0^+$ provided that $s \leq \chi R'(d_n)$.
	For future reference denote
	\begin{equation}\label{eq:4:defN}
		N_n = \max_{1 \leq k \leq n} X_k.
	\end{equation}
	With Lemma~\ref{lem:4:asK} at hand we can state and prove the aforementioned lemma regarding the contribution of particles from $\mathcal{A}_n$. 

	\begin{lemma}\label{lem:3:an}
		Under Assumptions~\ref{as:BP} and \ref{as:RW} for any $r \in (0,1)$, if $b_n \sim d_n$ and $a_n =  1/R'(d_n)$, 
		\begin{equation*}
			\Lambda_n^{\mathcal{A}} \Rightarrow o
		\end{equation*}
		in $\Mcal_p$, where $o$ denotes the null measure.
	\end{lemma}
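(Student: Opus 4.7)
I would establish $\Lambda_n^{\mathcal{A}} \Rightarrow o$ in $M_p(\bR)$ by showing that for every $K \in \R$, $\E^*\bigl[\Lambda_n^{\mathcal{A}}((K, \infty])\bigr] \to 0$. Since $\Lambda_n^{\mathcal{A}}((K, \infty])$ is $\mathbb{Z}_+$-valued, Markov's inequality then gives convergence in probability to $0$; weak convergence to the null measure in $M_p(\bR)$ follows because every $f \in C_K^+(\bR)$ is supported in some $(K, \infty]$ and thus $\int f \, d\Lambda_n^{\mathcal{A}} \leq \|f\|_\infty \Lambda_n^{\mathcal{A}}((K, \infty])$.

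Using the many-to-one identity, that is, $\E[Z_n] = m^n$ combined with the independence of the genealogy from the displacements,
\begin{equation*}
\E^*\bigl[\Lambda_n^{\mathcal{A}}((K, \infty])\bigr] \leq \const \cdot m^n \, \P\!\bigl[\,S_n > b_n + a_n K,\; N_n \leq \delta_n\,\bigr].
\end{equation*}
An exponential Markov inequality on the truncated sum then gives, for any $s > 0$,
\begin{equation*}
\P\!\bigl[\,S_n > y,\; N_n \leq \delta_n\,\bigr] \leq \exp\bigl(-sy + nK_{\delta_n}(s)\bigr)\, \P[X_1 \leq \delta_n]^n.
\end{equation*}
Since $n\P[X_1 > \delta_n] = O(n m^{-\delta^r n}) \to 0$, the last factor tends to $1$. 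Invoking Lemma~\ref{lem:4:asK} for $s \in [0, \chi R'(d_n)]$ with any $\chi < 1/d$ replaces $nK_{\delta_n}(s)$ by $nK(s) + o(1)$. The task reduces to choosing $s$ so that $n\log m - sy + nK(s) \to -\infty$ with $y = b_n + a_n K \sim d_n$.

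The main obstacle is closing this Chernoff bound. With $s = \chi R'(d_n)$, Karamata's theorem yields $R'(d_n)\,d_n \sim r\cdot n\log m$, and for $r \leq 2/3$ the higher-cumulant contributions satisfy $nK(s) \to 0$, giving an exponent $\approx (1 - \chi r)n\log m$, which is positive since $\chi < 1/d \leq 1/r$. To bridge this gap, I would complement the Chernoff estimate with a union bound over jump configurations. Setting $\eta_n = \eta d_n$ for some $\eta \in (2^{-1/r}, \delta)$ (nonempty since $2^{-1/r} < 2^{-r} < \delta$ for $r < 1$), let $J = \#\{i \leq n : X_i > \eta_n\}$. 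Then $\{J \geq 2\}$ has probability at most $n^2\,\P[X > \eta_n]^2 \leq n^2\, m^{-2\eta^r n}$, and since $2\eta^r > 1$, $m^n$ times this tends to $0$. On the events $\{J = 0\}$ and $\{J = 1\}$, the sum $S_n$ equals at most one medium jump plus a sum of iid copies truncated at $\eta_n$; applying the Chernoff bound to the latter with $s$ now in the range of an $\eta_n$-analogue of Lemma~\ref{lem:4:asK}, and choosing $\eta$ close enough to $\delta$ so that the associated exponent genuinely beats $n\log m$, finishes the argument. The delicate synthesis between the multi-jump union bound and the truncated-sum Chernoff across all residual configurations is the chief technical hurdle.
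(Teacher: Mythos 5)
Your opening is identical to the paper's: reduce to $\E^*\bigl[\Lambda_n^{\mathcal{A}}[L,\infty]\bigr] \to 0$, apply the first moment bound $\E^*[Z_n]\leq \const\,m^n$, pass to the tilted law $\P_{\delta_n}$, and estimate $\P_{\delta_n}[S_n>b_n+La_n]$ by an exponential Markov inequality combined with Lemma~\ref{lem:4:asK}. Where you diverge is in closing the Chernoff bound. You fix $s=\chi R'(d_n)$ with $\chi<1/d\leq 1/r$, observe the exponent $(1-\chi r)n\log m$ stays positive, declare the Chernoff route stuck, and launch a multi-jump decomposition. This is the gap: the choice $s=\chi R'(d_n)$ with $\chi$ bounded strictly away from $1/r$ is suboptimal, and the paper does not make it. The paper takes $s=((\log m)n+n^{\xi})/(b_n+La_n)$ for $\xi\in(3-2/r,1)\cap(0,1)$, which by Karamata is $\sim R(d_n)/d_n\sim R'(d_n)/r$. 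This makes the linear term exactly $-s(b_n+La_n)=-(\log m)n-n^{\xi}$, and Lemma~\ref{lem:4:asK} then gives $nK_{\delta_n}(s)=nK(s)+o(1)=o(n^{\xi})$ (the leading cumulant term $ns^2/2$ is regularly varying of index $3-2/r<\xi$), so after multiplying by $m^n$ one is left with $\exp\{-n^{\xi}/\const\}\to 0$. The reason this $s$ is admissible in Lemma~\ref{lem:4:asK} is that, as proved in the appendix (Lemma~\ref{lem:a:Funiform}), the supremum there ranges over $s\leq \chi R'(\delta_n)$, not $s\leq\chi R'(d_n)$; since $R'$ is eventually decreasing and $\delta_n=\delta d_n<d_n$, one has $R'(\delta_n)\sim\delta^{r-1}R'(d_n)$ with $\delta^{r-1}>1$, which is precisely the extra room needed for $s\sim R'(d_n)/r$ to fall in the valid range (for a suitable $\chi<1/d$ and $\delta$).

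Beyond being unnecessary, your proposed detour does not obviously close. You want to discard $\{J\geq 2\}$ jump configurations by a union bound, which is fine, but in the $J=1$ branch the single medium jump $Y$ may be as large as $\delta_n=\delta d_n$; the Chernoff bound on the $\eta_n$-truncated remainder must then beat $m^n\cdot n\,\P[X>\eta_n]$ uniformly over $Y\in(\eta_n,\delta_n]$, which you do not verify. Moreover you say to take ``$\eta$ close enough to $\delta$,'' which shrinks rather than enlarges $R'(\eta_n)$ and hence the admissible window for $s$. The paper's single, optimally tuned exponential Markov bound is both simpler and complete.
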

	
	\begin{proof}
		Since the convergence is tested by integrating functions of compact support, it is enough to show that
		\begin{equation*}
			\Lambda_n^{\mathcal{A}} [L, \infty]  \overset{\P}\to 0
		\end{equation*}
		for any constant $L\in \R$. Since $\Lambda_n^{\mathcal{A}}$ is integer valued it is sufficient to estimate
		\begin{align*}
			\P \left[ \Lambda_n^{\mathcal{A}} [L , \infty] \geq 1 \right] \leq \E \left[ \Lambda_n^{\mathcal{A}} [L , \infty] \right] = 
			\E[Z_n] \P\left[S_n > b_n + L a_n, \: N_n \leq \varepsilon_n\right].
		\end{align*} 
		To tackle the second term on the right hand side note that
		\begin{equation*}
			\P\left[S_n > b_n + L a_n, \: N_n \leq \varepsilon_n\right] \leq \P_{\varepsilon_n}\left[S_n > b_n + L a_n\right].
		\end{equation*}
		Now use the exponential Markov inequality to get for $s>0$,
		\begin{equation*}
			\P_{\varepsilon_n}\left[S_n > b_n +L a_n\right] \leq \exp \left\{ -s (b_n + L a_n) + nK_{\varepsilon_n}(s) \right\}.
		\end{equation*}
		Choose $s = (n\log m + n^\xi)/(b_n + L  a_n)$ with $\xi \in (3 - 2/r,1) \cap (0,1)$. By Lemma~\ref{lem:4:asK}, $nK_{\varepsilon_n}(s) = o(n^{\xi})$ and thus
		\begin{equation}\label{eq:4:dtrongerclaim}
			\exp \left\{ -s (b_n + L a_n) + nK_{\varepsilon_n}(s) \right\} \leq m^{-n}\exp \left\{ -  n^\xi/\const  \right\}
		\end{equation}
		which yields our claim.
	\end{proof}
	
	The next class of particles consist of those that had at least two big jumps along the ancestral line, that is
	\begin{equation*}
		\mathcal{B}_n = \left\{w \in \mathbb{T}_n\: : \:   \exists v, u \in (\emptyset, w], \: v \neq u, \:  \: \min \{X_v, X_u\} > \varepsilon_n \right\}.
	\end{equation*} 		
	By the choice of $\varepsilon\in (2^{-r},1)$, $\mathcal{B}_n =\emptyset$ almost surely for sufficiently large $n$. Indeed, by a direct first moment bound
	\begin{equation}\label{eq:4:bn}
		\P[|\mathcal{B}_n|\geq 1] \leq \const \:  n^2 \: m^n \P\left[X> \varepsilon_n\right]^2 =o(1),
	\end{equation}
	since $2R(\varepsilon_n) \sim 2\varepsilon^r R(d_n)$ and $2 \varepsilon^r>1$.
	Thus
	\begin{equation*}
		\Lambda_n^{\mathcal{B}}=\sum_{w \in \mathcal{B}_n} \delta_{ a_n^{-1}(V(w) - b_n)} \Rightarrow o
	\end{equation*}
	in $\Mcal_p$.
	Particles that are not in $\mathcal{A}_n \cup \mathcal{B}_n$  have exactly one big jump along their ancestral line. 
	We will need to distinguish whether this jump is greater or smaller than
	\begin{equation}\label{eq:4:yn}
		y_n=\left\{ \begin{array}{rr}
				d_n - (\log n)^2 \frac{d_n^2+nR(d_n)^2}{ d_nR(d_n)} & r\in \left(0,  \frac 23 \right] \\
				d_n -  n R'(d_n)& r \in \left( \frac 23, 1 \right) .		
			\end{array}\right.
	\end{equation}
	For $w \in \mathbb{T}$ define $v^*(w)$ to be the element of $(\emptyset, w]$ closest to the root such that $X_{v^*(w)} = \max_{z \in (\emptyset,w]}X_z$. 
	The next class of particles we will consider is given by
	\begin{equation*}
		\mathcal{C}_n = \left\{w \in \mathbb{T}_n \: : \:  X_{v^*(w)} \in \left(\varepsilon_n, y_n\right] \mbox{ and }  
		\max _{ u \in (\emptyset, w]\setminus \{v^*(w)\}}  \:  X_{u} \leq \varepsilon_n \right\}.
	\end{equation*}		
	Define the corresponding point process via
	\begin{equation*}
		\Lambda_n^{\mathcal{C}}=\sum_{w \in \mathcal{C}_n} \delta_{ a_n^{-1}(V(w) - b_n)}.
	\end{equation*}

	The second truncation sequence $\{y_n\}_{n \in \N}$ is chosen such that the contribution of the particles in $\mathcal{C}_n$ is also negligible. 
	\begin{lemma}\label{lem:3:cr<1}
		Suppose that Assumptions~\ref{as:BP} and \ref{as:RW} are satisfied.
		Then for sequences $\{a_n\}_{n \in \N}$ and $\{b_n\}_{n \in \N}$ chosen as in Theorem~\ref{thm:2:main} we have 
		\begin{equation*}%\label{Cnegligible}
			\Lambda_n^{\mathcal{C}} \Rightarrow o
		\end{equation*}
		in $\Mcal_p$.
	\end{lemma}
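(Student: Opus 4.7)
The plan is to mimic the first-moment method used in Lemma~\ref{lem:3:an}, but now carefully tracking the effect of the unique medium jump. Since $\Lambda_n^{\mathcal{C}}[L,\infty]$ is $\N$-valued, it suffices to show $\E^*[\Lambda_n^{\mathcal{C}}[L,\infty]] \to 0$ for every $L\in\R$. Applying the many-to-one lemma, and picking up a combinatorial factor of $n$ for the location of the unique big jump along the ancestral line while forcing the remaining $n-1$ increments to be at most $\delta_n$, one gets
\begin{equation*}
\E^*\!\left[\Lambda_n^{\mathcal{C}}[L,\infty]\right] \le \const\cdot n\, m^n \int_{\delta_n}^{y_n}\!\P_{\delta_n}\!\left[S_{n-1} > b_n + La_n - x\right]\,\P(X_1\in \ud x).
\end{equation*}

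Next I would apply exponential Chebyshev under $\P_{\delta_n}$ and invoke Lemma~\ref{lem:4:asK} to replace $(n-1)K_{\delta_n}(s)$ by $nK(s)$ at a cost of $o(1)$ in the exponent, for $s$ in the admissible range. Combined with the tail identity $\P[X>x]=a(x)e^{-R(x)}$ handled via the integration-by-parts formula~\eqref{eq:4:intbyparts}, and optimizing pointwise in $x$ by $s_x=(K')^{-1}((b_n+La_n-x)/n)$, the integrand is controlled by a polynomial-in-$n$ prefactor times $\exp\{-\Psi_n^*(x)\}$, with
\begin{equation*}
\Psi_n^*(x):=R(x)+n\bigl[s_xK'(s_x)-K(s_x)\bigr].
\end{equation*}
By construction $\Psi_n^*$ is exactly the infimand appearing in the definition of $\Psi_n(b_n+La_n-d_n)=\Psi_n(\tau_n+La_n)$ after the change of variables $s\leftrightarrow x$; hence its global infimum over $x\in\R$ equals $\Psi_n(\tau_n+La_n)=n\log m+O(1)$ by~\eqref{eq:3:tau}, attained at $x^*=d_n+\tau_n+La_n-nK'(s^*)$ with $s^*=R'(x^*)$.

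The crucial quantitative step is to show that on the truncated range $x\in(\delta_n,y_n]$ the value $\Psi_n^*(x)$ exceeds $n\log m$ by more than $\log n$. Implicit differentiation yields $(\Psi_n^*)'(x)=R'(x)-s_x$ and $(\Psi_n^*)''(x)=R''(x)+1/(nK''(s_x))$; since Assumption~\ref{as:RW} forces $|R''(x)|=o(1/n)$ on the relevant scale, $\Psi_n^*$ is convex with $(\Psi_n^*)''(x^*)\sim 1/n$, so a Taylor expansion around $x^*$ gives $\Psi_n^*(x)-n\log m \ge c(x-x^*)^2/n$. For $r\le 2/3$ one has $\tau_n=o(a_n)$ so $x^*\approx d_n$, while $d_n-y_n\sim(\log n)^2 r a_n$, producing a margin of order $(\log n)^4 a_n^2/n\ge (\log n)^4$ (using $a_n^2/n\ge \const$ for $r\le 2/3$). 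For $r>2/3$ the optimum shifts to $x^*\approx d_n-nR'(d_n)/2$, so $x^*-y_n\sim nR'(d_n)/2$ yields a margin of order $nR'(d_n)^2 = n^{3-2/r}\ell(n)$, again dominating $\log n$. In both cases the margin absorbs the prefactor $nm^n$ and all polynomial corrections.

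The principal obstacle is making the quadratic expansion of $\Psi_n^*$ quantitative and uniform on $(\delta_n,y_n]$: the second-order Taylor remainder of $R$ must be controlled through the bound $R''(x)=O(x^{r-2+\epsilon})$ of Assumption~\ref{as:RW} (choosing $\epsilon<2(1-r)$ so that $n|R''(d_n)|\to 0$), and the higher cumulants contributing to $K''(s_x)$ must be tracked at small $s_x$, particularly for $r>2/3$ where $\kappa\ge 4$. Once this uniform control is established, the polynomial factors from integration by parts and from the density estimate $R'(x)\le R'(\delta_n)$ are easily dominated by the exponential margin, and one concludes $\E^*[\Lambda_n^{\mathcal{C}}[L,\infty]]=o(1)$, as required.
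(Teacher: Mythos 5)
Your approach differs from the paper's in a meaningful way: the paper applies the exponential Chebyshev inequality with a \emph{single, fixed} tilt $h_n=R(d_n)/d_n$ for all of $\mathcal{C}_n$, and then bounds $\E[e^{h_nX}\1_{\{X\in(\delta_n,y_n]\}}]$ via integration by parts and regular variation (Lemmas~\ref{lem:a:Hbound} and~\ref{lem:a:Hbound>23}); the deficit in the exponent relative to $n\log m$ falls out directly. You instead propose to tilt \emph{pointwise} in the size $x$ of the big jump, using $s_x=(K')^{-1}((b_n+La_n-x)/n)$, and to read the deficit off a quadratic Taylor expansion of $\Psi_n^*(x)$ around its minimiser $x^*>y_n$. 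Conceptually this is clean --- it explains in one stroke why $\tau_n$ enters through~\eqref{eq:3:tau} --- but it is a heavier argument, and two things are unaddressed.

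First, and most seriously: for $x$ near $\delta_n$ the tilt parameter $s_x$ leaves the admissible window. You invoke Lemma~\ref{lem:4:asK} to replace $K_{\delta_n}$ by $K$, but that lemma only gives an $o(1/n)$ error uniformly for $s\in[0,\chi R'(d_n)]$ with $\chi<1/d$. As $x\downarrow\delta_n$ one has $(b_n+La_n-x)/n\sim(1-\delta)d_n/n$, which is regularly varying with index $1/r-1>0$ and hence diverges, so $s_x\gg R'(d_n)$ there. You would need to cap the tilt (say at $\chi R'(d_n)$) on the low end of the range and argue separately --- which does work, since with a capped tilt the exponent $-R(x)-\chi R'(d_n)(b_n+La_n-x)+nK(\chi R'(d_n))$ is decreasing in $x$ on $(\delta_n, d_n)$ because $R'(x)>R'(d_n)>\chi R'(d_n)$, so the small-$x$ regime is in fact even better than $x=y_n$ --- but you never say so, and without this the uniform quadratic lower bound $\Psi_n^*(x)-n\log m\geq c(x-x^*)^2/n$ is simply not established on all of $(\delta_n,y_n]$. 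Second, and more cosmetically: for $r\le 2/3$ the claim ``$a_n^2/n\geq\const$'' may fail at $r=2/3$ (where $a_n^2/n$ is merely slowly varying); the paper's choice of $y_n$ contains the extra term $d_n^2/(d_nR(d_n))$ whose contribution to the margin is a fixed constant times $(\log n)^4$, which saves the bound, but your phrasing leaves this to luck. You also flag the uniform control of $(\Psi_n^*)''$ as ``the principal obstacle'' without resolving it; the bound $R''(x)=O(x^{r-2+\epsilon})$ with $\epsilon<2-2r$ does give $|R''(x)|=o(1/n)$ uniformly on $[\delta_n,y_n]$, so this part is fine once stated, but the proposal as written is still a plan rather than a proof.
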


	\begin{proof}
		We will proceed similarly as in the proof of Lemma~\ref{lem:3:an}. In order to carry out this program we will first estimate
		\begin{equation*}
			P_n=\P\left[S_{n-1}+X_n > b_n + L a_n, \: N_{n-1} \leq \varepsilon_n,  \:  X_n \in \left(\varepsilon_n, y_n\right]\right],
		\end{equation*}
		for $L\in \R$ and $N_n$ given in~\eqref{eq:4:defN}. We will first treat the case $r\leq  2/3$. 
		Note that with $h_n = R(d_n)/d_n$, using the exponential Markov inequality, 
		\begin{equation}\label{eq:4:Pnbound}
			P_n  \leq \const e^{-h_n(b_n+La_n) +nK_{\varepsilon_n}\left(h_n\right) } 
			\E \left[ e^{h_nX} \1_{\left\{ X \in \left(\varepsilon_n, y_n\right] \right\}} \right]. 
		\end{equation}
		By Lemma~\ref{lem:4:asK}, $nK_{\varepsilon_n}(h_n) =O(nR(d_n)^2/d_n^2)$ and the expectation on the r.h.s. of \eqref{eq:4:Pnbound}, by 
		Lemma~\ref{lem:a:Hbound} from the appendix, has an upper 
		bound of the form 
		\begin{equation*}
			 \E \left[ e^{h_n X} \1_{\left\{ X \in \left(\varepsilon_n, y_n\right] \right\}} \right] \leq  
			 \exp \left\{ - (1+ n R(d_n)^2/d_n^2)(\log n)^2/\const \right\}.
		\end{equation*}
		Finally for sufficiently large $n$, $h_n(b_n+La_n) \geq R(d_n) - |L/d|$ as $R(x)x^{-d}$ is assumed to be increasing for some $d\in (0,1)$.
		Gathering all the estimates together shows that for $r\leq 2/3$, and sufficiently large $n$,
		\begin{equation*}
			P_n \leq m^{-n} e^{- (\log n)^2/\const}.
		\end{equation*}
		For the case $r >2/3$ we proceed in the same way and get~\eqref{eq:4:Pnbound} with $nK_{\varepsilon_n}(h_n) = (1/2+o(1))  nR(d_n)^2/d_n^2$ and by 
		Lemma~\ref{lem:a:Hbound>23} from the appendix
		\begin{equation*}
			 \E \left[ e^{h_n X} \1_{\left\{ X \in \left(\varepsilon_n, y_n\right] \right\}} \right] \leq   
			 \exp \left\{ \frac{nR(d_n)R'(d_n)}{d_n} \left(\frac{R'(d_n)d_n}{R(d_n)} -1\right)(1+o(1)) \right\}.
		\end{equation*}
		Recall that $b_n = d_n+\tau_n$ with $\tau_n = (1+o(1)) nR'(d_n)/2$ and write
		\begin{equation*}
			h_n(b_n+La_n) = -R(d_n) - (1+o(1)) nR(d_n)R'(d_n)/(2d_n).
		\end{equation*}
		Combining all the estimates yields
		\begin{equation*}
			P_n \leq m^{-n} \exp \left\{ \frac{nR(d_n)R'(d_n)}{d_n} \left(\frac{R'(d_n)d_n}{R(d_n)} -\frac 32 + \frac{R(d_n)}{2R'(d_n)d_n}\right)(1+o(1)) \right\}.
		\end{equation*}
		Since we assume that for some $\epsilon>0$ the functions $R(x)x^{-1/2-\epsilon}$ and $R(x) x^{-1+\epsilon}$ are eventually increasing and 
		decreasing respectively, $y=xR'(x)/R(x) \in [1/2+\varepsilon, 1-\varepsilon]$ 
		for sufficiently large $x$ and thus the expression $y-3/2+1/(2y)$ remains negative and bounded away from $0$.
		We can therefore conclude that for $r >2/3$,
		\begin{equation*}
			P_n \leq m^{-n} \exp \left\{ -nR(d_n)R'(d_n) /(\const d_n)\right\}.
		\end{equation*}
		Note that the sequence $nR(d_n)R'(d_n)/d_n$ is regularly varying with index $3-2/r>0$ and thus it diverges to $+\infty$ as $n \to \infty$.
		\medskip

		Finally to conclude our claim note that we have proven that for any $r \in (0,1)$, and $L\in \R$, 
		\begin{equation*}
			\P\left[ \Lambda_n^{\mathcal{C}} [L, \infty) \geq 1\right] \leq \E\left[ \Lambda_n^{\mathcal{C}} [L, \infty)  \right] \leq \const \: n m^n P_n \leq e^{-  (\log n)^2/\const} \to 0.
		\end{equation*}
	\end{proof}

		Our considerations up to this point show that the particles from $\mathcal{A}_n\cup \mathcal{B}_n\cup \mathcal{C}_n$ do not affect the 
		asymptotic behaviour of  $\scale_{a_n^{-1}}\shift_{-b_n}\Lambda_n$ provided that the 
		latter has a non-trivial limit. In the rest of the article we will show that for 
		$\mathcal{D}_n = \mathbb{T}_n \setminus (\mathcal{A}_n \cup \mathcal{B}_n \cup \mathcal{C}_n)$ the point process
		\begin{equation*}
			\Lambda_n^{\mathcal{D}}=\sum_{w \in \mathcal{D}_n} \delta_{ a_n^{-1}(V(w) - b_n)}
		\end{equation*}
		has a non-trivial limit in distribution. For this reason we need to show that the extremes on $\mathcal{D}_n$ decouple.

\section{Decoupling}\label{sec:dec}

		Until now we investigated only the contribution of 
		big jumps which as it will soon turn out is not enough for our needs. Now we will be also investigating the contribution coming from typical particles. 
		This is when the precise large deviations beyond the so called one big jump domain 
		come into play. In this Section we will state the large deviation result in Theorem~\ref{thm:5:LD} and then use it to show that the extremal positions
		of particles from $\mathcal{D}_n$ decouple in Lemma~\ref{lem:4:Dn}.  \medskip
	
		For functions $f_n, g_n \colon \R \to \R$ we will say 
		that $f_{n-j_n}(z_n) \sim g_{n-j_n}(z_n)$ uniformly with respect to $j_n \in J_n\subseteq \R$ and $z_n \in H_n\subseteq \R$ if
		\begin{equation*}
			\lim_{n \to \infty}\sup_{j_n\in J_n}\sup_{z_n \in G_n} \left| \frac{f_{n-j_n}(x_n)}{g_{n-j_n}(x_n)}-1 \right| = 0.
		\end{equation*}
		Equivalently, $f_{n-j_n}(z_n) \sim g_{n-j_n}(z_n)$ for any choice of $x_n \in H_n$ and $j_n \in J_n$.
		Put 
		\begin{equation}\label{eq:5:hn}
			H_n = \left[d_n+nR'(d_n)/2 -z_n^*, d_n+nR'(d_n)/2+z_n^*\right],
		\end{equation}
		where $z_n^*$ is any sequence of real numbers that is regularly varying with index strictly less than $2-1/r$ if $r > 2/3$ and less than 
		$1/r-1/2$ if $r \leq 2/3$.
		Finally for $x_n \in H_n$ we write
		\begin{equation}\label{eq:5:rate}
			I(x_n) =  \inf_{s \in [0,1]} \left\{ R(x_n-nK'(s)) +  n(sK'(s) - K(s)) \right\}.
		\end{equation}
		Theorem~\ref{thm:5:LD} given below states that the probability that $S_n$ exceeds $x_n\in H_n$ decays as $ne^{-I(x_n)}$. The form of $I(x_n)$ can be 
		heuristically derived in the following way. 
		Write $S_n = N_n+S_n-N_n$. Then $S_n-N_n$ can be seen as the sum of $n-1$ of the $X_k's$ that are not the maximal among $X_1, \ldots , X_n$. 
		In other words. $S_n-N_n$ is the sum of typical values $X_1, \ldots X_n$ and $N_n$ 
		is the one atypical value among $X_1, \ldots , X_n$. Now the event $\{ S_n>x_n\}$ is a union of $\{ N_n> a \} \cap \{ S_n-N_n>b\}$ along $a+b = x_n$. 
		For technical reasons it is most convenient to write $a = x_n -nK'(s)$ and 
		$b = nK'(s)$ for some $s>0$. Then the probability of $\{ N_n = x_n-nK'(s)\}$ decays as $n e^{-R(x_n-nK'(s))}$ and $\{S_n-N_n > nK'(s)\}$ decays as 
		$n^{-1/2}e^{n(K(s)-sK'(s))}$ (see~\eqref{eq:2:rao}). Finally the infimum
		present in the definition of $I(x_n)$ should be interpreted as taking the most probable choice of $s$ and thus of 
		$a = x_n -nK'(s)$ and $b = nK'(s)$.\medskip

 		Note that the infimum in~\eqref{eq:5:rate} is attained for $s=s^*_n$ which is the unique solution to
		\begin{equation*}\label{eq:sstar}
			s^*_n = R'(x_n - nK'(s_n^*))
		\end{equation*}
		in the interval $[0,1]$. By computing the derivative one can easily check that, for a sufficiently large $n \in \N$, 
		the function $s \mapsto R'(x_n - nK'(s))$ is a contraction on $[0,1]$ and thus the existence and 
		uniqueness of $s^*_n$ follows from the Banach fixed 
		point theorem. We thus have the following representation
		\begin{equation*}
			I(x_n) =R(x_n-nK'(s^*_n)) +  n(sK'(s^*_n) - K(s^*_n)) .
		\end{equation*}
		One then checks, using the regular variation of $R'$ that $s^*_n$ is bounded from above by $(1+o(1)) R'(d_n)$.  It transpires that 
		if $ r <2/3$ then $I(x_n) =R(x_n) +o(1)$ uniformly in $x_n\in H_n$ and for $r \geq 2/3$ we have $I(x_n) = R(d_n) + O(n^{4-3/r}\ell(n))$ 
		uniformly in $x_n \in H_n$ for some slowly varying function $\ell$.

		\begin{theorem}\label{thm:5:LD}
			Let Assumption~\ref{as:RW} be satisfied and let $J_n =  \left[0, j_n^* \right] \cap \N$ where $j_n^*$ is any sequence of natural numbers 
			which is regularly varying with index 
			strictly less than $\min \{ 1, 3/r-3\}$. 
			Then uniformly in $x_n \in H_n$ and $j \in J_n$,
			\begin{equation*}
				\P[S_{n-j_n}>x_n] \sim ne^{-I(x_n) + O(j_n^* R'(x_n)^2)}.
			\end{equation*}
		\end{theorem}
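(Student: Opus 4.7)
The plan is to make rigorous the heuristic given after~\eqref{eq:5:rate}: decompose $S_{n-j_n}=N_{n-j_n}+(S_{n-j_n}-N_{n-j_n})$ and treat the maximum $N_{n-j_n}$ as the unique atypical summand, of size $\approx x_n-nK'(s^*)$, while the remaining sum is handled by classical Cram\'er theory for truncated variables. Integrating against the position of the maximum and applying Laplace's method should produce the rate $I(x_n)$ from~\eqref{eq:5:rate} and the polynomial prefactor~$n$.

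First I would discard all configurations other than a single big jump. A first moment bound as in~\eqref{eq:4:bn} gives $\P[\#\{k\leq n-j_n:X_k>\delta_n\}\geq 2]\leq \const\, n^2 e^{-2R(\delta_n)}$, which is $o(ne^{-I(x_n)})$ whenever $\delta\in(2^{-r},1)$, since $2R(\delta_n)\sim 2\delta^r R(d_n)>R(d_n)$ and $I(x_n)=R(d_n)(1+o(1))$. By symmetry this reduces the target probability to
\begin{equation*}
(n-j_n)\int_{\delta_n}^{\infty} \P_{\delta_n}[S_{n-j_n-1}>x_n-y]\,\P[X\in \ud y]\;+\;o\bigl(ne^{-I(x_n)}\bigr),
\end{equation*}
with $\P_{\delta_n}$ the truncated measure introduced before Lemma~\ref{lem:4:asK}. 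A further exponential Chebyshev argument, in the spirit of Lemma~\ref{lem:3:cr<1}, then confines the essential mass of the integral to a neighbourhood of the critical point $y^*:=x_n-nK'(s^*)$ of radius $\sqrt{n}\log n$.

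On this window, the Bahadur--Ranga-Rao formula~\eqref{eq:2:rao} applied under $\P_{\delta_n}$, combined with Lemma~\ref{lem:4:asK} which replaces $K_{\delta_n}$ by $K$, yields uniformly in $y$
\begin{equation*}
\P_{\delta_n}[S_{n-j_n-1}>x_n-y]\sim \frac{1}{s(y)\sqrt{2\pi(n-j_n-1)K''(s(y))}}\,e^{-(n-j_n-1)K^*\left(\frac{x_n-y}{n-j_n-1}\right)},
\end{equation*}
where $s(y)$ is the tilting parameter defined by $K'(s(y))=(x_n-y)/(n-j_n-1)$ and $K^*(\rho)=\sup_s(s\rho-K(s))$. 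Using $\P[X>y]=a(y)e^{-R(y)}$ and integration by parts~\eqref{eq:4:intbyparts}, the measure $\P[X\in \ud y]$ becomes $aR'(y)e^{-R(y)}\ud y$ up to dominated boundary terms. The resulting single integral has exponent
\begin{equation*}
\phi_n(y)=R(y)+(n-j_n-1)K^*\!\left(\tfrac{x_n-y}{n-j_n-1}\right),
\end{equation*}
which, under the substitution $y=x_n-(n-j_n-1)K'(s)$, is exactly the function appearing in~\eqref{eq:5:rate} with $n$ replaced by $n-j_n-1$. Computing $\phi_n'(y)=R'(y)-s(y)$ one sees that the saddle point satisfies $R'(y^*)=s(y^*)=s^*$, while $\phi_n''(y^*)\sim 1/(nK''(s^*))$ since $R''(x)=O(x^{r-2+\epsilon})$ and $d_n\sim n^{1/r}$ together make $R''(y^*)=o(1/n)$. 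A Laplace evaluation therefore produces $\sqrt{2\pi nK''(s^*)}$, which cancels the denominator from Bahadur--Ranga-Rao, leaves the factor $R'(y^*)/s(y^*)=1$ at the saddle, and combines with $(n-j_n)$ to yield the announced prefactor~$n$.

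The main obstacle is uniformity in $x_n\in H_n$ and $j_n\in J_n$. The error $O(j_n^* R'(x_n)^2)$ in the exponent arises from the discrepancy between minimising $\phi_n$ and minimising the $j_n=0$ version of~\eqref{eq:5:rate}: since $s^*\sim R'(x_n)$ and $K(s)\sim s^2/2$ near~$0$ by~\eqref{eq:2:truncatedcumulant}, replacing $(n-j_n-1)$ by $n$ in the $(sK'-K)$ term costs exactly $j_n(s^*K'(s^*)-K(s^*))=O(j_n R'(x_n)^2)$. The choice of $H_n$ ensures that $s^*(x_n)$ stays bounded and regularly varying so that the Laplace saddle is non-degenerate, while the index condition on $j_n^*$ is calibrated so that $(n-j_n)/n\to 1$ uniformly in the Bahadur--Ranga-Rao prefactor and so that the quadratic Laplace expansion remains valid throughout $H_n$. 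Control of $R''$ via Assumption~\ref{as:RW}, Karamata-type estimates for $R'$, and Lemma~\ref{lem:4:asK} will be the main analytic tools.
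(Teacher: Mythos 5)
Your overall strategy matches the paper's exactly: isolate the event of a single jump exceeding $y_n$ (the preliminary lemmas in Section~\ref{sec:LD} discard no-big-jump, two-big-jump, and medium-jump configurations), localize the remaining probability to a narrow window, tilt the truncated law, and extract $I(x_n)$ with a polynomial prefactor $n$ from the choice of which summand jumps. The error term $O(j_n^* R'(x_n)^2)$ you derive from $s^*K'(s^*)-K(s^*)\approx (s^*)^2/2\approx R'(x_n)^2/2$ is also the computation the paper performs at the very end of its proof.

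The genuine gap is in the step where you ``apply Bahadur--Ranga-Rao under $\P_{\delta_n}$, combined with Lemma~\ref{lem:4:asK}.'' The BRR formula~\eqref{eq:2:rao} is a local-limit-type statement for a \emph{fixed} non-lattice law; here $\P_{\delta_n}$ is a triangular array whose truncation level diverges, and the tilting parameter $s(y)$ varies with $y$, $x_n$ and $j_n$. Making BRR uniform over this array requires a uniform Edgeworth/Berry--Esseen estimate under the tilted laws and some regularity of the law of $X$ (non-latticity, or a density for your step $\P[X\in\ud y]=aR'(y)e^{-R(y)}\ud y$) that Assumption~\ref{as:RW} does not grant. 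The paper deliberately avoids this: it conditions on $S_{n_0}$ instead of on the big jump, uses integration by parts~\eqref{eq:4:intbyparts} so that only $\P[X>t]=a(t)e^{-R(t)}$ is needed (no density of $X$), and replaces the local CLT by Lemma~\ref{lem:7:last}, which asserts only that $\int_{D_n}e^{ye_n-y^2f_n/2}\,F_n^{(s^*)}(\ud y)\to 1$. That lemma needs nothing beyond distribution-function convergence $F_n^{(s^*)}\to\Phi$ (a triangular-array CLT, secured via Lemma~\ref{lem:4:asK}) and a Chernoff tail bound~\eqref{eq:7:upper}. You correctly flagged uniformity as the main obstacle; to close the argument you would have to either prove a uniform BRR for the array $\P_{\delta_n}$ or, as the paper does, reorganize the Fubini so that only the cumulative distribution function --- not a local density --- of the tilted sum enters.
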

		
		The case $r \leq  2/3$ corresponds to~\cite[Theorem 3]{1969:nagaev:integral}, the case $r= 2/3$ to~\cite[Theorem 5]{1969:nagaev:integral} and 
		the case $r > 2/3$ to ~\cite[Theorem 2]{1969:nagaev:integral}.  
		However the statement of the last one was too implicit for our needs and therefore we  provide a new proof with a more explicit statement. 
		We will present a self-contained proof of Theorem~\ref{thm:5:LD} in Section~\ref{sec:LD}. 
		Note that the case $r<2/3$ is covered in~\cite{Denisov2008}. \medskip

		We will use Theorem~\ref{thm:5:LD} in the proof of our decoupling lemma which we will state below. For $k<j$ and $w \in \mathbb{T}$ define
		\begin{equation*}
			V_{k,j}(w) =  \sum_{u\leq w, \: k < |u|\leq j} X_u .
		\end{equation*}
		The next lemma asserts that the particles in $\mathcal{D}_n$ whose positions are around $b_n$ are asymptotically independent.
		The idea of the proof is the following. First, note that for $w \in \mathbb{T}_n \setminus (\mathcal{A}_n \cup \mathcal{B}_n \cup \mathcal{C}_n)$ 
		we have $X_{v^*(w)} > y_n$. One can use this information to infer estimates on $V_{k_n^{(1)},n}(w)$ for sufficiently big $k_n^{(1)}$. 
		Then, using the branching property and the aforementioned estimates, one can get bounds on $V_{k_n^{(2)},n}(w)$ for $k_n^{(2)} < k_n^{(1)}$. One can then
		iterate this procedure $j$ times until $k_n^{(j)}$ gets small enough.

		\begin{lemma}[Decoupling lemma]\label{lem:4:Dn}
			Suppose that Assumptions~\ref{as:BP} and \ref{as:RW} are satisfied. Then there exists a sequence of natural numbers $k_n$ 
			such that $k_n=o\left( n^{1-r-\epsilon}\right)$ for some $\epsilon>0$ 
			and a sequence $l_n=o(b_n)$ such that $a_n=o(l_n)$ with the following property.
			The~sequence of events $\{A_n\}_{n \in \N}$ such that $A_n$ occurs whenever there exist $w, z \in \mathcal{D}_n$ such that 
			\begin{equation*}
				\min\{V_{k_n,n}(w), V_{k_n,n}(z)\}>b_n-l_n, \: v^*(w)\neq v^*(z),\: |w\wedge z| > k_n
			\end{equation*}
			or if $r>2/3$ and there exists $w\in \mathcal{D}_n$ such that $V_{k_n,n}(v^*(w)) > b_n-l_n$ and $|v^*(w)| \leq n-k_n$ 
			satisfies
			\begin{equation*}
				\lim_{n \to \infty} \P \left[ A_n \right] =0.
			\end{equation*}
		\end{lemma}
		\begin{proof}
			We will first treat $r \leq 2/3$. In this case the event $A_n$ is determined only by the first condition. 
			Take $k_n$ to be the smallest integer grater than  
			\begin{equation*}
				(\log n)^3 R'(d_n) (d_n-y_n+1),
			\end{equation*}
			take $l_n=0$ and write, 
			for sufficiently large $n$, 
			a simple estimate
			\begin{equation*}
				\P[A_n] \leq \const m^{2n-k_n}\P[X>y_n]^2\leq e^{-(\log n)^3/ \const} 
			\end{equation*}
			which secures our claim for $r \leq 2/3$. \medskip
			
			The proof for $r> 2/3$ is more involved. In this case the event $A_n$ is determined by both conditions mentioned in the statement.
			For $j \geq 1$ define the sequence of indexes $\{q_j\}_{j \geq 1}$, where $q_1 \in (3-2/r, 1)$ and 
			$q_{j+1}= 1+(q_j-1)/r$. Then $q_{j+1}=1 -2 r^{-j}(1-q_1) \to -\infty$. Let $j_0$ to be the smallest positive integer for which $q_{j_0} < (1-r)$. 
			Note that by the recursive formula for the $q_j$'s,  $q_{j_0} \geq 0$. We can always choose the value of $q_1$ such that the last inequality 
			is strict, so we have $q_{j_0}>0$. 
			Next define the approximating sequences $\{ k_n^{(j)}\}_{n\in \N}$ and $\{ \ell_n^{(j)}\}_{n\in \N}$ via 
			$k_n^{(1)} = n^{q_1}$, $k_n^{(j)} = d_{k_n^{(j-1)}} R'(d_n) \log n$ and 
			$\ell_n^{(j)} = d_{k_n^{(j)}} R'(d_n) (\log n)^{1/2}$. Then for each $j$, $\{ k_n^{(j)}\}_{n\in \N}$ is 
			regularly varying with index $q_j$ and $\{ \ell_n^{(j)}\}_{n\in \N}$ is regularly varying with index $q_j/r$.
			In what follows we will make use of three asymptotic relations between the two sequences which follow from the construction. 
			Namely for each $j\geq 2$, as $n \to \infty$, 
			\begin{equation}\label{eq:5:asympLem}
				k_n^{(j)} \gg R'(d_n) \ell_n^{(j-1)},  \quad \ell_n^{(j)} \gg d_{k_{n}^{(j)}}, \quad \mbox{and}\quad  k_n^{(1)} \gg nR'(d_n)^2.
			\end{equation}
			Define $l_n = l_n^{(j_0)}$ and $k_n = k_n^{(j_0)}$. 
			Consider the subsets of $\mathbb{T}_n$ given via
			%\begin{equation*}
			%	\mathcal{F}_n^{(0)} = \mathbb{T}_n \setminus (\mathcal{A}_n\cup \mathcal{B}_n \cup \mathcal{C}_n) =\left\{ x \in \mathbb{T}_n \: : \: X_{v^*(x)} > y_n, \:\max_{w \in (\emptyset, x]\setminus \{ v^*(x)\}} X_w \leq \varepsilon_n\right\}
			%\end{equation*}
			$\mathcal{F}_n^{(0)} = \mathcal{D}_n$ and for $j \geq 1$, let
			\begin{align*}
				\mathcal{G}_n^{(j)} & = \left\{ w \in \mathcal{F}_n^{(j-1)} \: : \: | v^*(w)| \leq n-k_n^{(j)}\right\}, \\
				\mathcal{H}_n^{(j)} & = \left\{ w \in \mathcal{F}_n^{(j-1)} \: : \: | v^*(w)| > n-k_n^{(j)},\: V_{k_n^{(j)},n}((v^*(w)) \leq b_n -\ell_n^{(j)}\right\},\\
				\mathcal{F}_n^{(j)} & = \left\{ w \in \mathcal{F}_n^{(j-1)} \: : \:  | v^*(w)| > n-k_n^{(j)}, \: V_{k_n^{(j)},n}(v^*(w)) > b_n -\ell_n^{(j)}   \right\} . \\
						    & = \left\{ w \in \mathcal{D}_n \: : \:  | v^*(w)| > n-k_n^{(j)}, \: V_{k_n^{(i)},n}(v^*(w)) > b_n -\ell_n^{(i)}, i<j   \right\} .	
			\end{align*}
			We have 
			\begin{equation*}
				\P \left[ \mathcal{G}_n^{(1)} \neq \emptyset \right] \leq \const m^{n-k_n^{(1)}}\P[X>y_n] \leq \exp \left\{ - n^{q_1}/\const \right\}
			\end{equation*}
			and 
			\begin{multline*}
				\P \left[ \exists w \in \mathcal{H}_n^{(1)},  \: V_{k_n,n}(w)>b_n-l_n  \right] 
					\leq \\ \const m^{n}\P[X>y_n] \P\left[S_{k_n^{(1)}-k_n}>l_n^{(1)}(1+o(1))\right] \leq \exp \left\{ - n^{q_1}/\const \right\}.
			\end{multline*}	
			Turning our attention to $ \mathcal{G}_n^{(m)}$ and $\mathcal{H}_n^{(m)}$ with $j_0>m\geq 2$ we use the uniform estimates in 
			Theorem~\ref{thm:5:LD} and the first condition in~\eqref{eq:5:asympLem} to write 
 			\begin{multline*}
				\P\left[ \mathcal{G}_n^{(m)} \neq \emptyset \right]  \\
				\leq \P\left[ \exists w \in \mathbb{T}, \:  n-k_n^{(m-1)}< |w| \leq n-k_n^{(m)} , \: V_{k_n,n}(w) >b_n-l_n^{(m-1)} \right]\\
				 \leq \sum_{j=k_{n}^{(m)}}^{k^{(m-1)}_n} m^{n-j} \P[S_{n-j-k_n}>b_n -l_n^{(m-1)}] \leq \exp \left\{- k_n^{(m)}/\const \right\}.
			\end{multline*}
			In a similar way, using the first two conditions in~~\eqref{eq:5:asympLem} and Theorem~\ref{thm:5:LD},
			\begin{multline*}
				\P \left[ \exists w \in\mathcal{H}_n^{(m)} , \: V_{k_n,n}(w)>b_n -l_n \right]\leq  \\
				 %\P \left[ \exists x \in \mathcal{D}_n^{(m)}  \:  |v^*(x)| >  n-k_n^{(m)}, \:  b_n+La_n-l_n^{(m-1)}< V(v^*(x)) , \: V_{|v^*(x)|, n}(x)>l^{(m)}_n  \right] \\
				\leq m^n\sum_{j=0}^{k_n^{(m)}} \P\left[ S_{n-j-k_n} > b_n+La_n-l_n^{(m-1)}\right] \P_{\varepsilon_n}\left[S_j> l^{(m)}_n \right] \\\leq  \exp \left\{- k_n^{(m)} /\const\right\}.
			\end{multline*}	
			From the above considerations
			\begin{equation}\label{eq:5:secondcond}
				\P \left[ \exists w \in \bigcup_{j=0}^{j_0-1} \mathcal{F}_n^{(j)}, \: V_{k_n,n}(v^*(w)) >b_n-l_n, |v^*(w)|\leq n-k_m \right] \to 0.
			\end{equation}
			Next consider the events
			\begin{multline*}
				A_n^{(j)} = \left\{ \exists w, \: z \in \mathcal{F}_n^{(j_0)}\, :  \min \{ V_{k_n,n}(w), V_{k_n,n}(z) \} >b_n-l_n, \right.\\ 
					\left. v^*(w) \neq v^*(z), \: |v^*(w) \wedge v^*(z) | \geq k_n^{(j)} \right\}.
			\end{multline*}
			Then for $j=1$, and sufficiently large $n$,
			\begin{equation*}
				\P\left[  A_n^{(1)} \right] \leq \const m^n m^{n-k_n^{(1)}} \P[X>y_n]^2 \leq \exp \left\{ - n^{q_1} / \const \right\}
			\end{equation*}
			and  for $j>1$,
			\begin{equation}\label{eq:4:aj}
				\P\left[ A_n^{(j)} \right]  \leq  \P\left[ A_n^{(j-1)} \right]  +  \P\left[A_n^{(j)}\setminus A_n^{(j-1)}\right].%\P\left[  \exists x, \: y \in \mathcal{F}_n^{(j-1)}, \: k_n^{(j-1)}>|v^*(x) \wedge v^*(y) | \geq k_n^{(j)} \right] .
			\end{equation}
			On the event $\{k_n^{(j-1)}>|v^*(w) \wedge v^*(z) |\}$ the random variables $V_{k_n^{(j-1)}, n}(w)$ and $V_{k_n^{(j-1)}, n}(z)$ are independent 
			given $\sigma \left( X_v \: : |v|\leq k_n^{(j-1)} \right)$. 
			Now by the definition of $\mathcal{F}_n^{(j-1)}$ we use the aforementioned conditional independence and write
			\begin{multline*}
				\P\left[  A_n^{(j)}\setminus A_n^{(j-1)}\right]\leq
				\P\left[  \exists w, \: z \in \mathcal{F}_n^{(j_0)}, \: V_{k_n,n}(w)>b_n-l_n, \right. \\  V_{k_n^{j-1},n}(z^*(y)) >b_n-l_n^{(j-1)} \left. k_n^{(j-1)}>|v^*(w) \wedge v^*(z) | \geq k_n^{(j)} \right] \\
				\leq \const m^n m^{n-k_n^{(j)}} \P\left[S_{n-k_n^{(j-1)} -k_n} > b_n-l_n^{(j-1)}\right] \P[S_{n-k_n}>b_n-l_n].
			\end{multline*}
			The large deviation result for stretched exponential random variables in Theorem~\ref{thm:5:LD} implies that for sufficiently large $n$, 
			\begin{equation*}
				\P[S_{n-k_n^{(j-1)}} > b_n+La_n-l_n^{(j-1)}] \leq m^n \exp \left\{  \const R'(d_n) l_n^{(j-1)} \right\}.
			\end{equation*}
			By our choice of sequences,  $R'(d_n)l_n^{(j-1)} \ll  k_n^{(j)}$ as stated in~\eqref{eq:5:asympLem}. Going back to~\eqref{eq:4:aj} gives
			\begin{equation*}
				\P\left[ A_n^{(j)} \right]  \leq  \P\left[ A_n^{(j-1)} \right]   + \exp \left\{ - k_n^{(j)}/\const \right\}
			\end{equation*}
			which by induction implies that for any $j \leq j_0$, $\P[A_n^{(j)}]\to0$. The claim follows since $A_n$ is a union of $A_{n}^{(j_0)}$ and the 
			event considered in~\eqref{eq:5:secondcond}.
		\end{proof}
		
		Lemma~\ref{lem:4:Dn} will allow for an efficient decoupling. The position of each $w \in \mathcal{D}_n $ can be written in the following way:
		\begin{equation}\label{eq:4:Vdec}
			V(w) = H_1(w) + T(w) +X_{v^*(w)} + H_2(w),
		\end{equation}
		where
		\begin{align*}
			&H_1(w)   = \sum_{z\leq w} X_z \1_{\{|z|\leq k_n  \}},  %\1_{\{ X_w \leq \delta n^{1/r} \}} \\
			 &H_2(w)  = \sum_{ z \leq w} X_z\1_{\{ X_z \leq \varepsilon_n, \: |z|>n-k_n \}}, \\ 
			&T(w) =  \sum_{|z|\leq w} X_z\1_{\{ |z|\in [k_n, n-k_n]\}}. %\1_{\{ X_w \leq \delta n^{1/r} \}}
		\end{align*}

		As one may expect, the first term on the r.h.s. of~\eqref{eq:4:Vdec} is negligible.
		
		\begin{lemma}\label{lem:5:neg}
			For any $r \in (0,1)$,  $\P$ - a.s.
			\begin{equation*}
				\lim_{n \to \infty}\max_{w \in \mathcal{D}_n}H_1(w)/a_n= 0.
			\end{equation*}
		\end{lemma}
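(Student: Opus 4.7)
The plan is to reduce the maximum $\max_{x \in \mathcal{E}_n} H_1(x)$ to the maximum of the BRW at the auxiliary generation $k_n$ and then invoke the law of large numbers for that maximum. First I observe that since $H_1(x) = \sum_{w \leq x,\, |w|\leq k_n} X_w$ depends only on the ancestor $x|_{k_n}$ of $x$ at generation $k_n$, one has $H_1(x) = V(x|_{k_n})$ and hence
\begin{equation*}
\max_{x \in \mathcal{E}_n} H_1(x) \leq \max_{v \in \mathbb{T}_{k_n}} V(v) = M_{k_n}.
\end{equation*}
It therefore suffices to prove $M_{k_n}/a_n \to 0$ $\P^*$-almost surely.

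Next I would invoke the strong law $M_m/d_m \to 1$ $\P^*$-a.s. from~\cite{Gantert2000}, already quoted in the introduction. Since $k_n \to \infty$ deterministically (by the construction of $k_n$ in Lemma~\ref{lem:4:Dn}), passing to the subsequence $m = k_n$ gives $M_{k_n} \sim d_{k_n}$ $\P^*$-a.s. The problem thereby reduces to the purely deterministic statement $d_{k_n} = o(a_n)$.

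I would verify this in the two regimes separately. For $r \leq 2/3$, Lemma~\ref{lem:4:Dn} guarantees $k_n = o((\log n)^4)$, so $d_{k_n} = k_n^{1/r}\ell_d(k_n)$ is polylogarithmic in $n$ while $a_n = n^{1/r - 1}\ell_a(n)$ is regularly varying with positive index $1/r - 1 \geq 1/2$; the ratio plainly tends to zero. For $r > 2/3$, inspection of the iterative construction of $k_n = k_n^{(j_0)}$ in the proof of Lemma~\ref{lem:4:Dn} shows that $k_n$ is regularly varying of index $q_{j_0} \in (0, 1-r)$; consequently $d_{k_n}/a_n$ is regularly varying of index $(q_{j_0} - 1 + r)/r$, which is strictly negative because $q_{j_0} < 1 - r$, and so the ratio vanishes.

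The main obstacle lies in the $r > 2/3$ case, where $k_n$ can be almost as large as $n^{1-r}$ and the index calculation is tight; it rests on the strict inequality $q_{j_0} < 1-r$ that is built into the definition of $j_0$ in the proof of Lemma~\ref{lem:4:Dn}. Should one prefer a self-contained argument avoiding~\cite{Gantert2000}, the same conclusion follows from the first-moment bound $\P^*[M_{k_n} > \epsilon a_n] \leq C m^{k_n} \P[S_{k_n} > \epsilon a_n]$ combined with a Fuk--Nagaev-type estimate $\P[S_{k_n} > \epsilon a_n] \leq k_n\P[X > \epsilon a_n/2] + \exp(-c a_n^2/k_n)$: the heavy-tail term is super-polynomially small since $R(a_n)$ is regularly varying of positive index, and the same comparison of indices $q_{j_0} < 1/r - 1$ ensures that the Gaussian exponent dominates $k_n \log m$, yielding summability and hence the claim via Borel--Cantelli.
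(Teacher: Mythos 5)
Your proposal is correct and takes essentially the same approach as the paper: bound $\max_{x\in\mathcal{E}_n}H_1(x)$ by $M_{k_n}$, invoke the law of large numbers $M_m/d_m\to 1$ from \cite{Gantert2000}, and reduce to the index comparison $d_{k_n}=o(a_n)$ enabled by the constraint $k_n=o(n^{1-r})$ (resp.\ $k_n=o((\log n)^4)$) built into Lemma~\ref{lem:4:Dn}. You spell out the two-regime index arithmetic and offer a self-contained Fuk--Nagaev alternative, both of which are correct but go beyond what the paper's terse one-line proof records.
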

		
		\begin{proof}
			One can use the quite generous upper bound 
			\begin{equation*}
				\max_{w \in \mathcal{D}_n}|H_1(w) |\leq M_{k_n}.
			\end{equation*}
			Since $d_{k_n} = o\left( n^{1/r-1-\epsilon} \right)$ for some $\epsilon>0$ as in the claim of Lemma~\ref{lem:4:Dn}, we get $M_{k_n} / a_n \to 0$ 
			as $a_n$ is regularly varying with index $1/r-1$. 
		\end{proof}
		
		As a consequence of Lemma~\ref{lem:5:neg} in order to establish convergence of $\Lambda_n^{\mathcal{D}}$ 
		it is sufficient to study
		\begin{equation*}
				\Theta_n=\sum_{w \in \mathcal{D}_n} \delta_{  a_n^{-1}(T(w)+X_{v^*(w)} +H_2(w) - b_n )}.
		\end{equation*}
		
		\begin{lemma}\label{lem:5:toTheta}
			Suppose that Assumptions~\ref{as:BP} and \ref{as:RW} are satisfied. Let $a_n$ and $b_n$ be chosen as in the statement of 
			Theorem~\ref{thm:2:main}. Then as $n \to \infty$,
			\begin{equation*}
				 \Lambda^{\mathcal{D}}_n-  \Theta_n \Rightarrow o
			\end{equation*}
			in $\Mcal_p$. 
		\end{lemma}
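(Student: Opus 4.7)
The plan is to verify the vague convergence through Proposition~\ref{vague-conv}: for each $f \in C_K^+(\bR)$ with $\mathrm{supp}\, f \subseteq [L,+\infty]$, it suffices to show that $\langle \Lambda_n^{\mathcal{E}} - \Theta_n, f\rangle \to 0$ in probability. Since both point processes are indexed by $\mathcal{E}_n$ and the atom of $\Lambda_n^{\mathcal{E}}$ associated with $x$ differs from the atom of $\Theta_n$ only by the shift $\epsilon_x := a_n^{-1}H_1(x)$, the difference admits the single-sum representation
\begin{equation*}
\langle \Lambda_n^{\mathcal{E}} - \Theta_n, f\rangle = \sum_{x \in \mathcal{E}_n}\bigl[f(u_x + \epsilon_x) - f(u_x)\bigr],
\end{equation*}
with $u_x = a_n^{-1}(T(x) + X_{v^*(x)} + H_2(x) - b_n)$.

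Lemma~\ref{lem:5:neg} delivers $\Delta_n := \sup_{x \in \mathcal{E}_n}|\epsilon_x| \to 0$ $\P^*$-a.s. Since $f$ is uniformly continuous with modulus $\omega_f$ and vanishes outside $[L,+\infty]$, each summand is bounded by $\omega_f(\Delta_n)$ and is nonzero only when $u_x \geq L - 1$ (eventually, once $\Delta_n < 1$). Consequently
\begin{equation*}
|\langle \Lambda_n^{\mathcal{E}} - \Theta_n, f\rangle| \leq \omega_f(\Delta_n) \cdot N_n, \qquad N_n := |\{x \in \mathcal{E}_n : u_x \geq L - 1\}|,
\end{equation*}
so the task reduces to proving that $N_n$ is tight under $\P^*$; then $\omega_f(\Delta_n)\cdot N_n \to 0$ in probability and Proposition~\ref{vague-conv} applies.

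The tightness of $N_n$ is the main obstacle. A direct first moment estimate is inconclusive, because by Theorem~\ref{thm:5:LD} combined with the calibration $R(d_n) = n\log m$ one has $\E^*[N_n] \leq \const \cdot m^n\,\P[S_n \geq b_n + (L-1)a_n] \sim \const \cdot n\,e^{-(L-1)}$, which diverges with $n$; this reflects the fact that the limit process $\Lambda$ in~\eqref{eq:3:LambdaLimit} satisfies $\E[\Lambda([L-1,+\infty])] = \infty$ since $\E[T_1] = \infty$. To circumvent this, I would rely on the cluster structure furnished by Lemma~\ref{lem:4:Dn}: off an event of vanishing $\P^*$-probability, every $x$ contributing to $N_n$ descends from one of finitely many distinct big-jump vertices $v^*(x)$ lying in the last $k_n$ generations of $\mathbb{T}_n$. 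A one-big-jump count at level $y_n$, as in~\eqref{eq:4:bn}, renders the number of such cluster roots tight, while within each cluster the number of contributing descendants is stochastically controlled by the size of a Galton--Watson subtree of depth at most $k_n$, whose distribution approximates the a.s.\ finite random variable $T_1$ from~\eqref{eq:2:tk}. Combining these two tightness statements yields $N_n = O_{\P^*}(1)$ and completes the proof.
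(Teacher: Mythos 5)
Your reduction is the same as the paper's: bound $|\langle \Lambda_n^{\mathcal{E}} - \Theta_n, f\rangle|$ by $\omega_f(\Delta_n)$ times the number of $\Theta_n$-points in a slightly enlarged support of $f$, then conclude once you know this count is tight. You also correctly identify the obstruction that first moments fail (the intensity of $\Lambda$ on $[L-1,+\infty]$ is infinite since $\E[T_1]=\infty$), which is a genuine and easy-to-miss point.

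The gap is in the tightness step. The paper's phrase ``We will present the proof of this lemma once we establish the convergence of $\Theta_n$'' is a signal: the intended argument is to prove Proposition~\ref{prop:6:main} first and then simply invoke it --- since $\Theta_n \Rightarrow \Lambda$ in $M_p(\bR)$ and $\Lambda$ is a.s.\ Radon, $\Theta_n\bigl(\mathrm{supp}(f)+(-\epsilon,\epsilon)\bigr)$ is automatically bounded in probability for every fixed $\epsilon$. Your substitute sketch does not quite close. Specifically, \eqref{eq:4:bn} is a \emph{second}-moment bound showing two jumps exceeding $\delta_n$ cannot coexist; it says nothing about the count of \emph{single} jumps exceeding $y_n$ in the last $k_n$ generations. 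That count is in fact \emph{not} tight: its expectation is of order $m^n k_n \P[X>y_n]$, which diverges. To make a cluster-root count tight you must restrict to roots $v$ whose path sum $V_{k_n,|v|}(v)$ is already near the threshold $b_n + La_n - l_n$, i.e.\ use the second defining condition of $\mathcal{E}_n$, and then feed this through the uniform large-deviation estimate of Theorem~\ref{thm:5:LD}. Likewise the claim that the per-cluster subtree size ``approximates $T_1$'' is precisely (a piece of) what Proposition~\ref{prop:6:main} establishes via the Laplace-functional computation; asserting it here without proof is circular in spirit. The clean fix is simply to cite Proposition~\ref{prop:6:main} for tightness of $\Theta_n$ on compacts, as the paper does.
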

		We will present the proof of this lemma once we establish the convergence of $\Theta_n$.

\section{Convergence of the point processes}\label{sec:lim}
		
	We now turn to the final arguments in in the proof of Theorem~\ref{thm:2:main}. We will first show that the decoupled point process $\Theta_n$ converges to 
	the desired limit. Then, using the tightness of $\Theta_n$, we will show Lemma~\ref{lem:5:toTheta} which will yield immediately Theorem~\ref{thm:2:main}.
	Note first that we can rewrite $\Theta_n$ in the following way
	\begin{equation*}
		\Theta_n = \sum_{n-k_n<|v|\leq n}\1_{\{X_v >y_n\}}  \sum_{z\geq v, \: |z|=n} \delta_{a_n^{-1}(T(z)+X_v+H_2(z)-b_n)}  \1_{E_n(v,z)},
	\end{equation*}
	where $E_n(v,z) = \left\{ X_w  \leq \varepsilon_n \: \mbox{ for all } w \in ( \emptyset, z]\setminus \{v\}, |w|\geq k_n\right\}$.

	\begin{proposition}\label{prop:6:main}
		Suppose that Assumptions~\ref{as:BP} and \ref{as:RW} are satisfied. Then for sequences $\{ b_n\}_{n\in \N}$ and $\{a_n\}_{n \in \N}$ as in the 
		statement of Theorem~\ref{thm:2:main}, and $\Lambda $ given by \eqref{eq:3:LambdaLimit},
		\begin{equation*}
			\Theta_n \Rightarrow \Lambda
		\end{equation*}
		in $\Mcal_p$.
	\end{proposition}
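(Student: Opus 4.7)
The plan is to establish convergence of Laplace functionals $\E^*[e^{-\int f\,d\Theta_n}] \to \E^*[e^{-\int f\,d\Lambda}]$ for every $f \in C_K^+(\bR)$, which by Proposition~\ref{vague-conv} characterises weak convergence in $M_p(\bR)$. Using the Laplace functional formula for $\Lambda$ displayed just above the statement, the target value is $\E^*\!\left[\exp\!\left(-aW\sum_{j=0}^\infty m^{-j}\int_\R \E[1-e^{-Z_j f(y)}]e^{-y}\,dy\right)\right]$.

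The first step is to rewrite $\Theta_n$ as a sum of point-mass clusters indexed by big-jump ancestors. By the definition of $\mathcal{E}_n$ and Lemma~\ref{lem:4:Dn}, each $x \in \mathcal{E}_n$ has a unique big-jump ancestor $v = v^*(x)$ with $|v| \in (n-k_n, n]$, and distinct such ancestors have common ancestry only up to depth $k_n$, eventually almost surely. Combining Lemma~\ref{lem:5:neg} with a truncated random-walk estimate for $H_2$ (analogous to the proof of Lemma~\ref{lem:3:an}, exploiting the truncation at $\delta_n$, the fact that $k_n \ll a_n$, and second-moment control), one obtains $a_n^{-1}(V(x)-b_n) = Y_v + o_{\P}(1)$ uniformly, with $Y_v := a_n^{-1}(T(v) + X_v - b_n)$. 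Hence
\[
	\Theta_n = \sum_{v \in \mathcal{V}_n} N_v\, \varepsilon_{Y_v} + o_{\P}(1) \quad \text{in } M_p(\bR),
\]
where $\mathcal{V}_n$ is the set of big-jump ancestors and $N_v$ is the number of level-$n$ descendants of $v$ reached via small-jump steps only; the latter is asymptotically distributed as $Z_{n-|v|}$ and independent of the path leading to $v$.

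The second step is to identify $\{(Y_v, N_v)\}_{v\in\mathcal{V}_n}$ with a marked Poisson point process using a Poisson approximation (the requisite second-moment control being supplied by Lemma~\ref{lem:4:Dn}) and to compute its intensity via the many-to-one formula together with Theorem~\ref{thm:5:LD}. For a random-walk path of length $n-j$ with $X_{n-j}$ being the terminal (large) step,
\[
	\E\!\left[\#\{v \in \mathcal{V}_n : |v| = n-j,\, Y_v \in dL\}\right] \approx m^{n-j}\, \P\bigl[S_{n-j} \in b_n + a_n\,dL,\, X_{n-j} > y_n\bigr]\, \P[Z_j > 0].
\]
The calibration $\Psi_n(\tau_n) = R(d_n) = n \log m$ combined with Theorem~\ref{thm:5:LD} gives $\P[S_{n-j} > b_n + L a_n] \sim nam^{-n}e^{-L}$, and by exchangeability the restriction that the big step sits at the last coordinate contributes a factor $1/n$, yielding $\P[S_{n-j}\in b_n+a_n\,dL,\, X_{n-j}>y_n] \sim am^{-n}e^{-L}\,dL$. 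Summing over $j$ via $\sum_j m^{-j}\P[Z_j>0] = \upsilon$, and replacing $m^{n-j}$ by the actual $Z_{n-j}\sim W m^{n-j}$ via the Kesten--Stigum convergence under Assumption~\ref{as:BP}, yields the conditional intensity $aW\upsilon\, e^{-L}\,dL$. The marks $N_v$ inherit in the limit exactly the mixture distribution~\eqref{eq:2:tk} of $T_1$, since a big-jump vertex at level $n-j$ is sampled with weight proportional to $m^{-j}\P[Z_j>0]$ and its cluster is distributed as $Z_j$ conditional on $\{Z_j > 0\}$.

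Assembling these, the limiting conditional Laplace functional equals $\exp(-aW\upsilon\int(1 - \E[e^{-T_1 f(y)}])e^{-y}\,dy)$, which upon unfolding the definition of $T_1$ coincides with the target. The main obstacle is the combination of Poisson approximation with precise large deviations in the regime $r > 2/3$: there the random-walk fluctuation $T(v)$ consistent with $Y_v$ in a compact set is of moderate-deviation order $\tau_n \gg \sqrt{n}$, so the joint distribution of $(T(v), X_v)$ cannot be controlled by a CLT and must be treated via the rate function $I$ of Theorem~\ref{thm:5:LD}. The calibration $\Psi_n(\tau_n) = R(d_n)$ is precisely the algebraic identity ensuring $e^{-I(b_n+La_n)} \sim m^{-n}e^{-L}$, so that the branching factor $m^{n-j}$ cancels $m^{-n}$ to produce a finite, $W$-dependent intensity. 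A secondary issue is transferring from the conditional statement given the tree to the unconditional one, which is handled by dominated convergence using the boundedness of Laplace functionals by $1$.
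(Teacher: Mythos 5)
Your proposal is correct and follows essentially the same approach as the paper: both compute the conditional (given the tree and $W$) Laplace functional, identify the intensity $ae^{-x}\,dx$ via the precise large deviation estimate of Theorem~\ref{thm:5:LD}, trace the cluster mark $T_1$ to the mixture $\sum_j m^{-j}\E[1-e^{-Z_j f}]$, and recover the shift $-\log(\upsilon W)$ from Kesten--Stigum. Where the paper factors the Laplace functional over generations $n-k_n,\ldots,n$ as $\E^*\bigl[\prod_k \E[\exp(-\int f\,d\Theta_{n,k})]^{Z_k}\bigr]$ and passes to the limit factor by factor, you phrase the same calculation as a Poisson approximation for the marked point process $\{(Y_v,N_v)\}$; these are two presentations of the same argument, with the paper's version being the more direct rigorous execution. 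One small point worth flagging: the paper keeps the $H_2$ term inside $\Theta_n$ and controls its effect through the modulus-of-continuity bound $|\Delta_{n-j}|\le Z_j\,\omega_f(M_{k_n}/a_n)$ rather than discarding it up front; your claim that $a_n^{-1}(V(x)-b_n)=Y_v+o_\P(1)$ uniformly over descendants of a given $v$ needs exactly that argument (a maximal bound on a BRW of depth $\le k_n$ with truncated steps, as in Lemma~\ref{lem:5:neg}), so you should spell it out rather than relegate it to "a truncated random-walk estimate."
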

	
	\begin{proof}%[Proof of Proposition~\ref{prop:5:scheme}]
		By the choice of the centring sequence $\{ b_n\}_{n \in \N}$ and the scaling sequence $\{a_n\}_{n \in \N}$, by an appeal to Lemma~\ref{lem:3:an}, 
		Lemma~\ref{lem:3:cr<1} and Theorem~\ref{thm:5:LD} we have for any $x \in \R$,  
		\begin{equation*}
			\lim_{n \to \infty }m^n\P\left[  S_{n-2k_n}+X >b_n + x a_n , \:  N_{n-2k_n} \leq \varepsilon_n, \: X>y_n \right] = ae^{-x}.
		\end{equation*}
		Therefore if we denote
		\begin{align*}
			\eta_n(\cdot) & = \P\left[  a_n^{-1}(S_{n-2k_n}+X-b_n) \in \cdot \:, \:  N_{n-2k_n} \leq \varepsilon_n, \: X>y_n\right] 
		\end{align*}
		the above observation implies that vaguely
		\begin{equation*}
			m^n \eta_n(\ud x) \to \eta(\ud x) = ae^{-x}\ud x.
		\end{equation*}
		Fix any $f \in \Ccal^+_K$ and take $n\in \N$ sufficiently small such that ${\rm supp}(f) \subseteq [-l_n/a_n, \infty]$. 
		Recall the statement of Lemma~\ref{lem:4:Dn} and the definition of the event $A_n$. By conditioning on 
		$\mathbb{T}$ and  $\mathcal{E}_n=\sigma(X_u, {|u| \geq n-k_n})$ we can write on the set $A_n^c$, 
		\begin{multline*}
			\E \left[\left. \exp \left\{ - \int f(x) \: \Theta_n (\ud x) \right\} \right| \mathbb{T}, \mathcal{E}_n \right]= \\
				\prod_{|v|\in(n-k_n, n] }\!\!\! \E \left[ \left.
					\exp\left\{ - \1_{E_n} \!\!\!\sum_{w\geq v, |w|=n}\!\!\!\1_{E_n'(v,w)}f \left(\frac{S_{n-k_n-1} + H_2(w) +X_v}{a_n} \right)  \right\} \right| 
				\mathbb{T}, \mathcal{E}_n  \right]
		\end{multline*}		
		where $S_{n-k_n-1}$ is independent from everything else,
		\begin{equation*}
			E_n'(v,w) = \left\{ X_z  \leq \varepsilon_n \: \mbox{ for all } z \in ( \emptyset, w]\setminus \{v\}, |z|\geq n-k_n, X_v>y_n\right\} \in \mathcal{E}_n
		\end{equation*}
		and
		$E_n= \left\{  N_{n-k_n-1} \leq \varepsilon_n \right\}$.
		Integrating the conditional expectation over the set $A_n^c$ and using the fact that $\P[A_n]=o(1)$ we get
		\begin{equation*}	
			\E \left[\exp \left\{ - \int f(x) \: \Theta_n (\ud x) \right\} \right] 
		= \E\left[\prod_{k=n-k_n}^n  \E \left[ \exp \left\{ - \int f(x) \: \Theta_{n,k} (\ud x) \right\} \right] ^{Z_k} \right]+o(1)
		\end{equation*}
		where
		\begin{equation*}
			\Theta_{n,k} =  \1_{E_n} \1_{\{ X >y_n \}}\sum_{ w \in \mathcal{A}_{n-k}}   \delta_{a_n^{-1}\left( S_{n-2k_n-1} +X +V(w)-b_n\right)},
		\end{equation*}
		We will argue that for $ j \in [0, k_n)$, 
		\begin{multline}\label{eq:6:claim}
			 Z_{n-j}\E \left[  1-  \exp \left\{ - \int f(x) \: \Theta_{n,n-j} (\ud x) \right\} \right] \\ \to m^{-j}W \int \E \left[ 1- e^{-Z_{j}f(x)}  \right] \eta(\ud x)
		\end{multline}
		a.s. which implies 
		\begin{multline*}
			\prod_{j=0}^{k_n}  \E^* \left[   \exp \left\{ - \int f(x) \: \Theta_{n,n-j} (\ud x) \right\} \right] ^{Z_{n-j}} \to \\ \E^* \left[ \exp \left\{ - W \sum_{j=0}^\infty m^{-j}  \int \E \left[ 1- e^{-Z_jf(x)}  \right] \eta(\ud x)  \right\}\right].
		\end{multline*}
		To show~\eqref{eq:6:claim} define the modulus of continuity of $f$ via
		\begin{equation}\label{eq:6:modulusf}
				\omega_f (\epsilon) = \sup_{|x-y| \leq \epsilon} |f(x)-f(y)|, \quad \epsilon>0\, .
		\end{equation}
		Denote
		\begin{equation*}
			\Delta_{n-j}= \int f(x) \: \Theta_{n,n-j} (\ud x) - Z_j f \left( a_n^{-1}\left( S_{n-2k_n} +X -b_n\right)\right)
		\end{equation*}
		and notice that the law $ |\Delta_{n-j}|$ is dominated in the sense of the stochastic order via
		\begin{equation*}	
			 |\Delta_{n-j}|\stackrel{d}{\leq} Z_j \omega_f\left( M_{k_n}/a_n  \right)
		\end{equation*}
		where the second term on the right hand side goes to $0$ in probability as $n \to \infty$, since $M_{k_n}/a_n  \overset{\P}\to 0$ (see the proof of Lemma~\ref{lem:5:neg}) . Note that since $f$ is necessarily bounded, the aforementioned convergence holds also in $L^1$. Now write
		\begin{multline*}
					 Z_{n-j}  \E \left[  1-  e^{ - \int f(x) \: \Theta_{n,n-j} (\ud x)} \right] 
					 	= Z_{n-j} \E \left[  1-  e^{-Z_j f \left( a_n^{-1}\left( S_{n-2k_n} +X -b_n\right)\right)} \right] \\
					 	+ Z_{n-j}\E \left[e^{-Z_j f \left( a_n^{-1}\left( S_{n-2k_n} +X -b_n\right)\right)} (1-e^{-\Delta_{n-j}})  \right].
		\end{multline*}
		The first term has the desired asymptotics, i.e. by an appeal to the vague convergence $m^n \eta_n(\cdot) \to \eta(\cdot)$, we can write
		\begin{multline*}
			Z_{n-j} \E \left[  1-  e^{-Z_j f \left( a_n^{-1}\left( S_{n-2k_n} +X -b_n\right)\right)} \right]   =  Z_{n-j}\int \E \left[1-e^{-Z_jf(x)} \right] \eta_n(\ud x) \\
				 = \frac{Z_{n-j}}{m^{n-j}} m^{-j}\int \E \left[1-e^{-Z_jf(x)} \right]  \: m^n\eta_n(\ud x) \rightarrow  m^{-j}W \int \E \left[ 1- e^{-Z_{j}f(x)}  \right] \eta(\ud x).
		\end{multline*}
		The second term is negligible since
		\begin{multline*}
			Z_{n-j}\E \left[e^{-Z_j f \left( a_n^{-1}\left( S_{n-2k_n} +X -b_n\right)\right)} (1-e^{-\Delta_{n-j}})  \right] \\
				\leq Z_{n-j}\E \left[e^{-Z_j f \left( a_n^{-1}\left( S_{n-2k_n} +X -b_n\right)\right)} \Delta_{n-j}  \right] \\
			 \leq \frac{Z_{n-j}}{m^{n-j}}  \E \left[ \frac{Z_j}{m^j} \omega_f\left( \frac{M_{k_n}}{R'(d_n)} \right)  \right] m^n \eta_n( (\varepsilon, \infty]).
		\end{multline*}
		The convergence in ~\eqref{eq:6:claim} follows and so does the claim.
		\end{proof}

		\begin{proof}[Proof of Lemma~\ref{lem:5:toTheta}]
			Fix $f \in \Ccal_K^+$ and recall \eqref{eq:6:modulusf}. 
			As an element of $\Ccal_K^+$ the function $f$ must have a limit at $+\infty$ and thus  $\omega_f(\epsilon) \to 0$ as $\epsilon \to 0$. 
			For any fixed $\epsilon>0$, on the event $B_n(\epsilon)=\left\{\max_{w \in \mathcal{D}_n}|H_1(w) |< \epsilon /R'(d_n) \right\}$,
			\begin{align*}
				&\left| \int f(x) \: \Lambda^{\mathcal{D}}_n(\ud x) - \int f(x) \: \Theta_n (\ud x) \right|  \leq  \Theta_n  ((\epsilon, \epsilon)+{\rm supp}(f)) \omega_f(\epsilon).
			\end{align*}
			The claim follows since the events $B_n(\epsilon)$ have probability close to $1$ and $\omega_f(\epsilon) \to 0$ while the measure of 
			$(\epsilon, \epsilon)+{\rm supp}(f)$ with respect to $ \Theta_n$ 
			remains bounded in probability as $\epsilon \to 0$.
		\end{proof}

	\begin{proof}[Proof of Theorem~\ref{thm:2:main}]
		The claim is an immediate consequence of Lemma~\ref{lem:5:toTheta} and Proposition~\ref{prop:6:main}.
	\end{proof}

\section{Large deviations for stretched exponential tails}\label{sec:LD}

		In this section we will revisit precise large deviations for sums of i.i.d. random variables with stretched exponential tails. This problem was addressed previously, see~\cite{1969:nagaev:integral, 1969:nagaev:integral2}. However the general
		results in~~\cite{1969:nagaev:integral} were not sufficiently explicit for our purposes. For this reason we decided to revisit the problem and study the deviations on the scale $d_n$, where $d_n$ is any regularly varying 
		function with index $1/r$. 
		We will study deviations uniformly in the time interval
		\begin{equation*}
			n_0 = n-j_n, \quad j_n \in J_n =  \left[0, j_n^* \right],
		\end{equation*}
		where $j_n^*$ is any sequence of natural number which is regularly varying with index strictly less than $\min \{ 1, 3/r-3\}$. 
		Recall $H_n \subseteq \R$ given in~\eqref{eq:5:hn} and the rate function $I(x_n)$ given in~\eqref{eq:5:rate}. 

		The first lemma asserts that whenever $S_{n_0}>x_n$ there has to be at least one jump grater than $\varepsilon_n= \varepsilon d_n$ for some fixed $\varepsilon \in (2^{-r},1)$. 
		The proof goes along the exact same lines as the proof of Lemma~\ref{lem:3:an} (see~\eqref{eq:4:dtrongerclaim}) 
		and is therefore omitted.

	\begin{lemma}
		Suppose that Assumption~\ref{as:RW} is satisfied. Then  
		\begin{equation*}
			\sup_{x_n \in H_n} \sup_{j_n \in J_n}\P[S_{n_0}>x_n, \: N_{n_0}\leq \varepsilon_n] =o \left( ne^{-I(x_n)} \right) .
		\end{equation*}
	\end{lemma}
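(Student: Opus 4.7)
The plan is to adapt the Chernoff argument leading to~\eqref{eq:4:dtrongerclaim}, replacing the constant exponent $n\log m$ by the variable rate $I(x_n)$, and to verify uniformity over $(x_n,j_n)\in H_n\times J_n$.

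First, I would bound
\begin{equation*}
\P[S_{n_0} > x_n,\, N_{n_0} \leq \delta_n] \;\leq\; \P_{\delta_n}[S_{n_0} > x_n] \;\leq\; \exp\{-s x_n + n_0 K_{\delta_n}(s)\}
\end{equation*}
by truncation followed by the exponential Markov inequality under $\P_{\delta_n}$, which is valid for any $s>0$. Then, fixing any $\xi\in(\max\{0,\,3-2/r\},\,1)$ (a nonempty interval for every $r\in(0,1)$, since $3-2/r<1$), I would choose $s=s_n(x_n):=(I(x_n)+n^\xi)/x_n$, so that $-s x_n=-I(x_n)-n^\xi$ exactly, and the Chernoff bound reduces to $\exp\{-I(x_n)-n^\xi+n_0 K_{\delta_n}(s)\}$.

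To estimate the cumulant term, note that by the analysis of the minimizer $s^*$ of~\eqref{eq:5:rate} and the definition of $H_n$ in~\eqref{eq:5:hn}, both $I(x_n)/R(d_n)\to 1$ and $x_n/d_n\to 1$ uniformly over $(x_n,j_n)\in H_n\times J_n$, so $s_n(x_n)\sim(\log m)\,n/d_n$ uniformly, a quantity of the same regular-variation index $1-1/r$ as $R'(d_n)$. Lemma~\ref{lem:4:asK} therefore yields $K_{\delta_n}(s)=K(s)+o(n^{-1})$ uniformly in this range, and the polynomial form $K(s)=s^2/2+O(s^3)$ gives $n_0 K_{\delta_n}(s)=(1+o(1))\,n s^2/2=O(n^{3-2/r})$, which is $o(n^\xi)$ by the choice of $\xi$. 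Inserting this into the Chernoff bound yields
\begin{equation*}
\P_{\delta_n}[S_{n_0} > x_n] \;\leq\; \exp\{-I(x_n) - (1-o(1))\,n^\xi\},
\end{equation*}
and since $e^{-(1-o(1))n^\xi+\log n}\to 0$, the right-hand side is $o(n\,e^{-I(x_n)})$ uniformly in $(x_n,j_n)\in H_n\times J_n$.

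The main technical point is securing uniformity of all the $o(\cdot)$-estimates: uniformity in $j_n$ is immediate since $n_0\leq n$ implies $n_0 K_{\delta_n}(s)\leq n K_{\delta_n}(s)$, while uniformity in $x_n$ rests on $H_n$ having diameter of strictly smaller regular-variation index than $d_n$, so that $I(x_n)$ and $x_n$ vary only slowly and $s_n(x_n)$ stays within a single interval $[0,\chi R'(d_n)]$ on which Lemma~\ref{lem:4:asK} applies with uniform error. I expect this uniformity, together with the borderline fit between our $s\sim R'(d_n)/r$ and the range permitted by Lemma~\ref{lem:4:asK}, to be the most delicate point — exactly as in the proof of Lemma~\ref{lem:3:an}.
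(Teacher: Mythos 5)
Your proposal is correct and follows essentially the same route as the paper: the paper explicitly states that the proof of this lemma goes along the exact same lines as the proof of Lemma~\ref{lem:3:an}, namely truncation to $\P_{\delta_n}$, the exponential Markov inequality, the tilt $s=(\text{rate}+n^\xi)/(\text{threshold})$ with $\xi\in(\max\{0,3-2/r\},1)$, and the uniform control of $n_0K_{\delta_n}(s)$ via Lemma~\ref{lem:4:asK}. Your observation that $I(x_n)\sim R(d_n)$ and $x_n\sim d_n$ uniformly on $H_n\times J_n$, so that $s_n(x_n)\sim(\log m)n/d_n$, is exactly the bookkeeping needed to transplant that argument, and the boundary issue you flag (whether $s_n(x_n)$ fits inside $[0,\chi R'(\delta_n)]$) is the same one implicitly present in the paper's own proof of Lemma~\ref{lem:3:an}; it resolves because $R'(x)x/R(x)\to r$, so $R(x)x^{-d}$ is eventually decreasing for any $d>r$, allowing $d$ and $\delta$ to be tuned so that $\chi$ can be taken slightly above the limit $\delta^{1-r}/r$ of $s_n(x_n)/R'(\delta_n)$.
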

	
	Next we will treat the possibility of having two jumps bigger than $\varepsilon_n$. The proof is straightforward, see~\eqref{eq:4:bn}, and is therefore also omitted.

	\begin{lemma}
		Suppose that Assumption~\ref{as:RW} is satisfied. Then
		\begin{equation*}
			\sup_{x_n \in H_n} \sup_{j_n \in J_n}\P[S_{n_0}>x_n, \: \exists i, j\leq n_0, \:  \min \{ X_i, X_j\} >\varepsilon_n, \: i\neq j] = o \left( n e^{-I(x_n)} \right).
		\end{equation*}
	\end{lemma}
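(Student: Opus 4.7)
The plan is to apply a simple union bound over pairs of indices, exactly parallel to the derivation of~\eqref{eq:4:bn}, except that here we are not multiplying by the population size $m^n$ — we are estimating a probability about a single random walk of length $n_0$, so the argument is strictly easier. Concretely, I would first drop the constraint $\{S_{n_0}>x_n\}$ and bound, using independence and identical distribution together with $n_0\leq n$,
\begin{equation*}
\P\bigl[S_{n_0}>x_n,\; \exists\, i\neq j\leq n_0:\min\{X_i,X_j\}>\delta_n\bigr] \leq \binom{n_0}{2}\P[X>\delta_n]^2 \leq \tfrac{1}{2}n^2\P[X>\delta_n]^2.
\end{equation*}

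Next I would plug in the tail estimate. By Assumption~\ref{as:RW}, $\P[X>\delta_n]\leq \const\cdot e^{-R(\delta_n)}$ for large $n$, and the regular variation of $R$ with index $r$ combined with $\delta_n = \delta d_n$ gives $R(\delta_n) = \delta^r R(d_n)(1+o(1))$. On the other hand, as observed in the paragraph just after the display defining $s^*$, one has $I(x_n) = R(d_n)(1+o(1))$ uniformly in $x_n\in H_n$ — for $r<2/3$ the correction is only $o(1)$, and for $r\geq 2/3$ it is $O(n^{4-3/r}\ell(n))=o(R(d_n))$. Combining these facts yields
\begin{equation*}
\frac{n^2\P[X>\delta_n]^2}{n\,e^{-I(x_n)}} \leq \const\cdot n\exp\bigl\{-(2\delta^r - 1)R(d_n)(1+o(1))\bigr\}.
\end{equation*}

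Since $\delta>2^{-r}$ forces $2\delta^r - 1>0$ and $R(d_n) = n\log m\to\infty$, the right-hand side decays exponentially in $n$, uniformly in $x_n\in H_n$ and $j_n\in J_n$, which establishes the claim. There is really no obstacle here: the choice $\delta\in(2^{-r},1)$ was engineered precisely so that two independent jumps of size $\delta_n$ are exponentially less likely than a single deviation of size $d_n$, with enough margin to absorb both the combinatorial factor $n^2$ and the factor $n$ on the right-hand side of the statement. The only bookkeeping is the uniformity in $x_n$ and $j_n$, which is immediate because the tail bound on $\P[X>\delta_n]^2$ does not depend on either parameter, because $n_0\leq n$, and because $I(x_n)/R(d_n)\to 1$ is already known to hold uniformly on $H_n$.
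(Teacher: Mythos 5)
Your argument is correct and is essentially the same one the paper intends: the paper explicitly refers the reader to the first-moment union bound~\eqref{eq:4:bn}, and your proof reproduces exactly that bound (dropping $m^n$ since one now divides by $ne^{-I(x_n)}$ instead) together with the correct observation that $I(x_n)=R(d_n)(1+o(1))$ uniformly on $H_n$ and that $2\delta^r>1$ provides the needed margin. The uniformity bookkeeping in $x_n$ and $j_n$ is handled properly.
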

	Taking the last two lemmas into account, we see that in order to establish the asymptotics of $\P[S_{n_0}>x_n]$ it is sufficient to study 
 	$$\P\left[ S_{n_0} +X > x_n , \: N_{n_0} \leq \varepsilon_n, \: X > \varepsilon_n \right]. $$
	In our next step we will show that $X$ has to be sufficiently big. Recall the sequence $\{ y_n\}_{n \in \N}$ given by~\eqref{eq:4:yn}.
	
	\begin{lemma}
		Suppose that Assumption~\ref{as:RW} is satisfied. Then 
		\begin{equation*}
			\sup_{x_n \in I_n} \sup_{j_n \in J_n}\P\left[ S_{n_0} +X > x_n , \: N_{n_0} \leq \varepsilon_n, \: X \in \left(\varepsilon_n, y_n\right] \right] = o \left( n e^{I(x_n)} \right).
		\end{equation*} 
	\end{lemma}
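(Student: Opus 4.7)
The plan is to adapt the proof of Lemma~\ref{lem:3:cr<1}, which treats the special case $x_n = b_n + La_n$, to the uniform setting required here. First I would apply independence of $S_{n_0}$ from $X$ together with the exponential Markov inequality at a tilt $h_n$:
\begin{equation*}
\P[S_{n_0}+X>x_n,\,N_{n_0}\leq\delta_n,\,X\in(\delta_n,y_n]]\leq e^{-h_n x_n}\,\E\bigl[e^{h_n S_{n_0}}\1_{\{N_{n_0}\leq\delta_n\}}\bigr]\,\E\bigl[e^{h_n X}\1_{\{X\in(\delta_n,y_n]\}}\bigr].
\end{equation*}
The middle factor is at most $\exp\{n_0 K_{\delta_n}(h_n)\}$, which by Lemma~\ref{lem:4:asK} equals $\exp\{n_0 K(h_n)+o(1)\}$ uniformly in $j_n\in J_n$ provided $h_n$ stays in the admissible range $[0,\chi R'(d_n)]$.

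I would take $h_n = s^*(x_n)$, the optimizer in~\eqref{eq:5:rate}, so that $h_n = R'(x_n - nK'(h_n))$ by stationarity. Writing $y(x_n) = x_n - nK'(h_n)$ and using this identity, a direct rearrangement gives
\begin{equation*}
-h_n x_n + nK(h_n) = -\bigl(I(x_n) - R(y(x_n))\bigr) - h_n\,y(x_n),
\end{equation*}
so that $e^{-h_n x_n + nK(h_n)} = e^{-I(x_n)}\cdot e^{R(y(x_n)) - h_n\,y(x_n)}$. The residual factor is absorbed into the restricted $X$-MGF: invoking Lemma~\ref{lem:a:Hbound} (when $r\leq 2/3$) or Lemma~\ref{lem:a:Hbound>23} (when $r>2/3$), together with the fact that the truncation level $y_n$ stays well separated from the peak $y(x_n)$ of the integrand $h_n e^{h_n t - R(t)}$, the product $e^{R(y(x_n)) - h_n\,y(x_n)}\cdot\E[e^{h_n X}\1_{\{X\in(\delta_n,y_n]\}}]$ is bounded by $e^{-g_n}$ with $g_n\gg\log n$; concretely $g_n\gg(\log n)^2$ for $r\leq 2/3$ and $g_n\gg nR(d_n)R'(d_n)/d_n$ for $r>2/3$. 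Collecting the three factors yields $\P[\cdot]\leq e^{-I(x_n) - g_n + O(j_n^*R'(d_n)^2)}$, which is $o(n e^{-I(x_n)})$.

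The main obstacle will be the uniformity in $x_n \in H_n$ and $j_n \in J_n$. As $x_n$ varies over $H_n$, both $s^*(x_n)$ and $y(x_n)$ shift within a window of lower order than $d_n - y_n$, so the peak--truncation gap $g_n$ is preserved uniformly; this follows from the regularity conditions imposed on $R$ in Assumption~\ref{as:RW} together with the index constraint on $z_n^*$ in the definition~\eqref{eq:5:hn} of $H_n$. The error $j_n^* K(h_n) = O(j_n^* R'(d_n)^2)$ coming from the middle factor is absorbed because the assumption that $j_n^*$ has index strictly less than $\min\{1, 3/r-3\}$ ensures $j_n^* R'(d_n)^2 \ll g_n$. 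Both comparisons reduce to the same regular-variation bookkeeping already present in the proof of Lemma~\ref{lem:3:cr<1}.
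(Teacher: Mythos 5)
The key step in your proposal is to take the tilt $h_n = s^*(x_n)$, the optimizer in $I(x_n)$, and then absorb the residual factor $e^{R(y(x_n)) - s^* y(x_n)}$ into the truncated $X$--moment. This does not work, and the gap is fatal. The algebraic identity $e^{-s^* x_n + nK(s^*)} = e^{-I(x_n)}\,e^{R(y) - s^* y}$ is correct, but the residual factor is \emph{enormous}: since $s^* y / R(y) = R'(y) y / R(y) \to r$, one has $R(y) - s^* y = (1-r)R(d_n)(1+o(1))$, which is of order $n\log m$. Your plan is that this is cancelled by the truncated $X$--moment, but it is not. The function $\phi(t) = s^* t - R(t)$ is \emph{convex} (because $R'' < 0$) with a \emph{minimum} at $y(x_n)$, so on the interval $(\delta_n, y_n]$, which lies entirely to the left of $y(x_n)$, $\phi$ is decreasing. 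Hence both the boundary term and the integral in the integration-by-parts formula are dominated by the $\delta_n$ endpoint, giving $\E\bigl[e^{s^*X}\1_{\{X\in(\delta_n,y_n]\}}\bigr] \asymp (\text{poly})\,e^{(r\delta - \delta^r)R(d_n)}$. Combining, the exponent in your bound is
\begin{equation*}
-I(x_n) + \bigl[(1-r) + r\delta - \delta^r\bigr]R(d_n)(1+o(1)).
\end{equation*}
A one-line check shows $g(\delta) := 1-r + r\delta - \delta^r$ satisfies $g(1)=0$ and $g'(\delta) = r(1-\delta^{r-1}) < 0$ for $\delta<1$, so $g(\delta) > 0$ for any $\delta\in(2^{-r},1)$. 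Thus your bound is $e^{-I(x_n) + cR(d_n)}$ with $c>0$, which is wildly larger than $n\,e^{-I(x_n)}$.

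The paper instead uses the tilt $h_n = R(d_n)/d_n$, which is \emph{strictly larger} than $s^* \sim R'(d_n) \sim r R(d_n)/d_n$. This deliberate over-tilting moves the critical point of $t\mapsto h_n t - R(t)$ from near $d_n$ to $t^* \approx r^{1/(1-r)}d_n$, which lies to the \emph{left} of $\delta_n$. As a result, the integrand $h_n e^{h_n t - R(t)}$ is decreasing on $(\delta_n, y_n]$ and the truncated moment is dominated by the $y_n$ endpoint, with $h_n y_n - R(y_n) < h_n d_n - R(d_n) = 0$. This is exactly what Lemmas~\ref{lem:a:Hbound} and \ref{lem:a:Hbound>23} compute, and it is crucial that both are stated with $h_n = R(d_n)/d_n$; you cannot invoke them with $h_n = s^*$, nor would their conclusions hold for that tilt. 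The subsequent cancellation in Lemma~\ref{lem:3:cr<1}, which brings in the condition that $u = R'(d_n)d_n/R(d_n)$ stays in $(1/2,1)$ so that $u - 3/2 + 1/(2u) < 0$, is specific to the over-tilted $h_n$. To repair your argument, replace $h_n = s^*$ by $h_n = R(d_n)/d_n$, apply Lemma~\ref{lem:a:Hbound} or \ref{lem:a:Hbound>23} as they are stated, and carry out the bookkeeping of Lemma~\ref{lem:3:cr<1} uniformly over $x_n\in H_n$ and $j_n\in J_n$; the uniformity considerations in the last paragraph of your proposal are then the right kind of thing to say, but they hinge on the corrected tilt.
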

	The proof goes along the same lines as the proof of Lemma~\ref{lem:3:cr<1} and is therefore omitted. 	
	In what follows we will investigate
	\begin{equation*}
		P_n  = \P\left[ S_{n_0} +X > x_n , \: N_{n_0} \leq \varepsilon_n, \: X > y_n\right] .
	\end{equation*}
	Consider $w_n>y_n$ given via
	\begin{equation}\label{eq:4:ynNEW}
		w_n=\left\{ \begin{array}{rr}
				d_n +  (\log n)^2/R'(d_n) & r\in \left(0,  2/3 \right] \\
				d_n + n R'(d_n)/3& r \in \left(  2/3, 1 \right) 		
			\end{array}\right..
	\end{equation}

 	We will start by writing 
	\begin{multline*}
		P_n =\P\left[ S_{n_0} > x_n-y_n , \: N_{n_0} \leq \varepsilon_n, \: X > y_n\right] \\
		+ \P\left[ S_{n_0} < x_n-w_n , \: N_{n_0} \leq \varepsilon_n, \: S_{n_0}+X>x_n, \: X>y_n\right]  \\+\P\left[ x_n-S_{n_0} \in[ y_n, w_n], \: N_{n_0} \leq \varepsilon_n, \: X > x_n-S_{n_0}\right] = A_n+B_n+C_n.
	\end{multline*}

	\begin{lemma}
		Suppose that Assumption~\ref{as:RW} is satisfied. We have 
		\begin{equation*}
			\sup_{x_n \in I_n} \sup_{j_n \in J_n}(A_n+B_n) = o \left( n e^{-I(x_n)} \right).
		\end{equation*}
	\end{lemma}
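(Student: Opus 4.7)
The strategy is to bound $A_n$ and $B_n$ separately and show that each is $o(ne^{-I(x_n)})$ uniformly in $x_n\in H_n$ and $j_n\in J_n$. In both cases the defining event forces either $X$ to exceed the ``optimal'' jump size prescribed by the variational problem defining $I(x_n)$, or the truncated random walk $S_{n_0}$ to cover too much ground by typical fluctuations; both of these are penalised exponentially.

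For $B_n$ the key observation is that on the event $\{S_{n_0}<x_n-w_n\}\cap\{S_{n_0}+X>x_n\}$ one necessarily has $X>w_n$, and hence $B_n\le \P[X>w_n]\le \const\cdot e^{-R(w_n)}$. It then suffices to verify that $R(w_n)-I(x_n)\gg\log n$ uniformly on $H_n$. For $r\le 2/3$, a second order Taylor expansion of $R$ around $d_n$ yields $R(w_n)=R(d_n)+(\log n)^2+o((\log n)^2)$; combined with $I(x_n)=R(d_n)+O(1)$ (which holds in this regime since $nR'(d_n)^2$ is bounded and the second-order corrections to $I$ are of that order) this gives a gap of order $(\log n)^2$. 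For $r>2/3$ the analogous expansion yields $R(w_n)=R(d_n)+nR'(d_n)^2/3+o(nR'(d_n)^2)$, while optimising in the definition of $I(x_n)$ around the fixed point $s^*\sim R'(d_n)$ shows $I(x_n)=R(d_n)+o(nR'(d_n)^2)$; the resulting gap is at least $nR'(d_n)^2/4$, which diverges polynomially since $nR'(d_n)^2$ is regularly varying with strictly positive index $3-2/r$.

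For $A_n$ I would use the independence of $X$ and $S_{n_0}$ to factorise
\begin{equation*}
A_n\le \P[X>y_n]\cdot \P_{\delta_n}[S_{n_0}>x_n-y_n]\le \const\cdot e^{-R(y_n)}\cdot \P_{\delta_n}[S_{n_0}>x_n-y_n]
\end{equation*}
and then apply the exponential Markov inequality to the second factor. Lemma~\ref{lem:4:asK} permits replacing the truncated cumulant $K_{\delta_n}$ by the truncated cumulant generating function $K$ at additive cost $o(1)$ in the exponent, provided the chosen $s$ lies in $[0,\chi R'(d_n)]$. Choosing the Legendre-optimal $s_n$ with $n_0K'(s_n)=x_n-y_n$ and using $K(s)=s^2/2+O(s^3)$ for small $s$ (justified because $s_n\sim(x_n-y_n)/n\to 0$) produces the bound $\P_{\delta_n}[S_{n_0}>x_n-y_n]\le \exp\{-(x_n-y_n)^2/(2n)(1+o(1))\}$. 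The specific choice of $y_n$ in~\eqref{eq:4:yn} is engineered precisely so that the resulting exponent $R(y_n)+(x_n-y_n)^2/(2n)$ exceeds $I(x_n)$ by a margin that grows faster than $\log n$. For $r\le 2/3$, Taylor expansion at $d_n$ gives $R(y_n)-R(d_n)=-O((\log n)^2)$ whereas $(x_n-y_n)^2/(2n)$ is of order $(\log n)^4/(nR'(d_n)^2)$, and since $nR'(d_n)^2\to 0$ for $r<2/3$ (and stays bounded at $r=2/3$) the quartic term dominates. For $r>2/3$ the choice $y_n=d_n-nR'(d_n)$ gives $R(y_n)-R(d_n)\sim -nR'(d_n)^2$ and $(x_n-y_n)^2/(2n)\sim 9nR'(d_n)^2/8$, leaving a net positive contribution of order $nR'(d_n)^2$ after subtracting $I(x_n)\sim R(d_n)$.

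The main obstacle is executing these expansions uniformly in $x_n\in H_n$ and $j_n\in J_n$ while tracking the second-order terms carefully. The assumption $R''(x)=O(x^{r-2+\epsilon})$ controls the Taylor remainder for $R$, while the uniformity in $j_n$ follows from the hypothesis that $j_n^*$ is regularly varying with index strictly less than $3/r-3$, which ensures that $j_n R'(d_n)^2$ is negligible compared to the margins identified above. The required uniform expansion $I(x_n)=R(d_n)+o(\cdot)$ on $H_n$ — which is used in both parts of the argument — can be extracted from the Banach fixed point characterisation of $s^*$ in the discussion preceding Theorem~\ref{thm:5:LD}.
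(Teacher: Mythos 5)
Your treatment of $B_n$ is exactly the paper's: $B_n \le \P[X>w_n]$, and the choice of $w_n$ makes $R(w_n)-I(x_n)$ diverge; that part is fine. Your factorisation of $A_n$ also matches the paper. The gap is in the choice of the Chernoff parameter.

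You propose taking the Legendre-optimal $s_n$ with $n_0 K'(s_n)=x_n-y_n$, so $s_n \approx (x_n-y_n)/n$, and then invoking Lemma~\ref{lem:4:asK} together with $K(s)=s^2/2+O(s^3)$. But Lemma~\ref{lem:4:asK} controls $K_{\delta_n}-K$ only on $[0,\chi R'(d_n)]$ with $\chi<1/d$, and for $r<2/3$ your $s_n$ falls far outside this window: with $y_n$ as in~\eqref{eq:4:yn}, $x_n-y_n$ is regularly varying with index $1/r-1$, so $s_n/R'(d_n)$ is regularly varying with index $2/r-3>0$ and diverges. (For $r<1/2$ one even has $s_n\to\infty$, so the expansion $K(s_n)=s_n^2/2+O(s_n^3)$ is not a small-$s$ expansion at all.) At such $s$ the truncated cumulant $K_{\delta_n}(s)$ is not close to $s^2/2$: the truncation boundary at $\delta_n=\delta d_n$ contributes a term of size $\exp\{s\delta_n-R(\delta_n)\}$, and $s_n\delta_n$ has regularly varying index $2/r-2>1$ while $R(\delta_n)$ has index $1$, so this term blows up. Consequently the bound you assert, $\P_{\delta_n}[S_{n_0}>x_n-y_n]\le\exp\{-(x_n-y_n)^2/(2n)(1+o(1))\}$, is not just unproved by this method but is in fact \emph{false} for $r<2-\sqrt 2$: a single truncated jump already gives
\begin{equation*}
\P_{\delta_n}[S_{n_0}>x_n-y_n]\ \ge\ \P_{\delta_n}[X_1>x_n-y_n]\,\P_{\delta_n}[S_{n_0-1}>0]\ \gtrsim\ e^{-R(x_n-y_n)},
\end{equation*}
and $R(x_n-y_n)$ is regularly varying in $n$ with index $1-r$, strictly smaller than $2/r-3$, the index of $(x_n-y_n)^2/(2n)$, when $r<2-\sqrt 2$.

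The paper sidesteps all of this by deliberately using a sub-optimal tilt $s=(1+\epsilon)R'(d_n)$, a fixed small multiple of $R'(d_n)$ that is guaranteed to lie inside the window of Lemma~\ref{lem:4:asK}. The resulting exponent is then linear rather than quadratic in $x_n-y_n$, namely $\exp\{-s(x_n-y_n)+nK_{\delta_n}(s)\}\approx\exp\{-(1+\epsilon)R'(d_n)(x_n-y_n)(1+o(1))\}$, and the definition of $y_n$ is engineered so that this weaker, but controllable, bound together with $\P[X>y_n]\approx e^{-R(y_n)}$ still produces the required margin of $\epsilon(\log n)^2$ (for $r\le 2/3$) or $\epsilon nR'(d_n)^2$ (for $r>2/3$) beyond $R(d_n)$. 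To repair your argument, replace the Legendre-optimal $s_n$ by $(1+\epsilon)R'(d_n)$ and carry the linear-in-$(x_n-y_n)$ exponent through; the rest of your bookkeeping for the comparison with $I(x_n)$ is on the right track.
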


	\begin{proof}
		We will first treat $A_n$. Simply write
		\begin{equation*}
			A_n \leq \P[X>y_n] \P_{\varepsilon_n}[S_{n_0} > x_n-y_n].
		\end{equation*}
		If $r \leq 2/3$ the above estimate combined with an exponential Markov's inequality with $s = (1+\epsilon)R'(d_n)$ constitutes
		\begin{equation*}
			A_n \leq \P[X >y_n] \exp \{ n_0K_{\varepsilon_n}(s) -s(x_n-y_n)  \}. %\leq \exp \left\{-R(d_n)- \varepsilon (\log n)^2 \right\} .
		\end{equation*}
		Provided that $\epsilon>0$ is sufficiently small, we can invoke Lemma~\ref{lem:4:asK} and conclude that
		\begin{equation*}
			A_n \leq \exp \left\{-R(d_n)- \epsilon (\log n)^2 \right\} .
		\end{equation*}
		For $r >2/3$ we apply the same argument (with the same choice of $s$ in the exponential Markov's inequality) and get
		\begin{equation*}
			A_n \leq  \exp \left\{-R(d_n)- \epsilon n R'(d_n)^2\right\} .
		\end{equation*}
		This proves our first claim.
		For the second term we have a simple bound $B_n \leq \P[X>w_n]$ which implies our claim.
	\end{proof}

	To treat $C_n$ firstly condition on $X_1, \ldots X_{n_0}$ and write
	\begin{equation}\label{eq:7:Cn}
		C_n = \E[e^{-R(x_n-S_{n_0})} \1_{\{ N_{n_0} \leq \varepsilon_n, \:  x_n-S_{n_0} \in[ y_n, w_n] \}}].
	\end{equation}
	Recall the measure $\P_{\varepsilon_n}$  introduced in Section~\ref{sec:trimming} and that we write $\E_{\varepsilon_n}$ for 
	its expectation.
 	We have uniformly in $x_n \in H_n$ and $j_n \in J_n$, 
	\begin{equation*}
		C_n =(1+o(1))  \E_{\varepsilon_n}[e^{-R(x_n-S_{n_0})} \1_{\{  x_n-S_{n_0} \in[ y_n, w_n] \}}].
	\end{equation*}	
	We will use yet another change of measure. 
	For $s>0$ denote by $\P_{s,\varepsilon_n}$ a probability measure with respect to which $X_k$ are i.i.d. distributed according to  
	\begin{equation*}
		\P_{s,\varepsilon_n}[X_k \leq t  ] = e^{-K_{\varepsilon_n}(s)} \E_{\varepsilon_n} \left[ \1_{\{ X_k\leq t\}} e^{sX_k} \right].
	\end{equation*}	
	Denoting by $\E_{s,\varepsilon_n}$ the expected value corresponding to $\P_{s,\varepsilon_n}$ we have $\E_{s,\varepsilon_n}[X_1] = K'_{\varepsilon_n}(s)$ and 
	$\E_{s, \varepsilon_n}\left[(X_1 - \E_{s, \varepsilon_n}[X_1])^2 \right] = K''_{\varepsilon_n}(s)$.
	Consider the distribution of normalized sums 
	\begin{equation*}
		F_n^{(s)}(y) = \P_{s, \varepsilon_n} \left[ \theta_n(S_{n_0}) \leq y  \right], \quad \text{where}\quad  \theta_n(y) = (y-n_0K'_{\varepsilon_n}(s))/\sqrt{n_0K''_{\varepsilon_n}(s)}.
	\end{equation*}
	Then $F_n^{(s)}$ converge point wise to the cumulative distribution function of standard normal distribution. 
 	and observe that for $D_n = [\theta_n(x_n-w_n), \theta_n(x_n-y_n)]$ we have
	\begin{multline*}
		 \E_{\varepsilon_n}[e^{-R(x_n-S_{n_0})} \1_{\{  x_n-S_{n_0} \in[ y_n, w_n] \}}] = \\
		e^{n_0 K_{\varepsilon_n}(s)}\E_{s, \varepsilon_n} \left[ e^{ -s S_{n_0}-R(x_n-S_{n_0})} \1_{\{  x_n-S_{n_0} \in[ y_n, w_n] \}}\right] \\
			  =   \exp\left\{ n_0K_{\varepsilon_n}(s)  \right\} \int_{D_n} e^{-s\theta_n^{-1}(y) - R(x_n-\theta_n^{-1}(y))} \: F_n^{(s)}(\ud y).
	\end{multline*}
	Now use Taylor expansion for $R$ to write, for $y \in \theta_n( [x_n-w_n, x_n-y_n])$, and $x_n(s) = x_n-n_0K_{\varepsilon_n}'(s)$, 
	\begin{equation*}
		R(x_n-\theta_n^{-1}(y)) = R(x_n(s)) -yR'(x_n(s)) \sqrt{n_0K''(s)} +  y^2R''(\xi_{y}) n_0K''(s) /2
	\end{equation*}
	for some $\xi_{y}$ that lies between $x_n(s)$ and $x_n(s)- y \sqrt{n_0K''_{\delta}(s)}$. 
	Thus we arrive at
	\begin{equation*}
		C_n =(1+o(1)) e^{ n_0K_{\varepsilon_n}(s) - n_0sK'_{\varepsilon_n}(s) - R(x_n(s)) } \int_{D_n} e^{y e_n(s) -y^2 R''(\xi_y) n_0K''_{\varepsilon_n}(s)/2}  \: F_n^{(s)}(\ud y)
	\end{equation*}
	with
	\begin{equation*}
		e_n(s) = (R'(x_n(s)) - s)\sqrt{n_0K''_{\varepsilon_n}(s)}.
	\end{equation*}
	We want to take $s=s^*$ in the above construction, where $s^*$ is characterised by being the unique solution to~\eqref{eq:sstar}. Then, by the merit of regular variation of $R''$, for some slowly varying function $\ell_n$,
	\begin{equation*}
		|e_n(s^*)| \leq |R'(x_n(s^*)) - R'(x_n-nK'(s^*))| \sqrt{n K''_{\varepsilon_n}(s^*)} \leq n^{-1/2} \ell_n.
	\end{equation*}
	Note that we used here the bound on $j_n^*$. In the sequel we will write $e_n = e_n(s^*)$.
	Finally by the regular variation of $R'''$ for some slowly varying sequence $\ell_n$
	\begin{equation}\label{eq:7:epsilonstar}
		\sup_{z \in [x_n-w_n, x_n-y_n]} |R''(\xi_{x_n, x_n+z}) - R''(x_n)| \leq  \epsilon_n=n^{3-4/r}\ell_n.
	\end{equation}
	It is therefore sufficient to show the claim of our next lemma.

	\begin{lemma}\label{lem:7:last}
		Suppose that Assumption~\ref{as:RW} is satisfied. Let 
		$f_n =(R''(x_n(s^*)) +\epsilon_n) n_0K''_{\varepsilon_n}(s^*)$ where $\{\epsilon_n\}_{n \in \N}$ is regularly varying with index $3-4/r$. 
		Then, as $n \to \infty$, 
		\begin{equation*}
			\sup_{x_n \in I_n} \sup_{j_n \in J_n}  \left| \int_{D_n} e^{y e_n -y^2 f_n/2}  \: F_n^{(s^*)}(\ud y) - 1 \right| \to 0.
		\end{equation*}
	\end{lemma}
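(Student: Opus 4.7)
The plan is to read the integral as an expectation under the tilted measure,
\begin{equation*}
\int_{D_n} e^{y e_n - y^2 f_n/2}\, F_n^{(s^*)}(\ud y)
= \E_{s^*,\delta_n}\!\left[ e^{\theta_n(S_{n_0}) e_n - \theta_n(S_{n_0})^2 f_n/2}\, \1_{\{\theta_n(S_{n_0}) \in D_n\}} \right],
\end{equation*}
and to combine a central limit theorem for the tilted sum $S_{n_0}$ with the smallness of $e_n$ and $f_n$ already established in the text.

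The first step is to verify that, under $\P_{s^*,\delta_n}$, the normalized sum $\theta_n(S_{n_0})$ converges weakly to the standard normal law $\Phi$, uniformly in $x_n \in H_n$ and $j_n \in J_n$. Since $s^* \leq (1+o(1)) R'(d_n) \to 0$, Lemma~\ref{lem:4:asK} gives $K'_{\delta_n}(s^*) \to 0$ and $K''_{\delta_n}(s^*) \to 1$ with the required uniformity. Under $\P_{s^*,\delta_n}$ the summands are bounded by $\delta_n$, and $\delta_n / \sqrt{n_0 K''_{\delta_n}(s^*)} \to 0$, so Lindeberg's condition holds trivially. A direct calculation with the explicit expressions in \eqref{eq:4:yn} and the asymptotics of $s^*$ then shows $\theta_n(x_n - y_n) \to +\infty$ and $\theta_n(x_n - w_n) \to -\infty$ uniformly, so $D_n$ eventually contains every bounded interval and $F_n^{(s^*)}(D_n) \to 1$.

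Next I split the integral at a fixed truncation $T > 0$. The bounds derived in the text give $|e_n| = O(n^{-1/2} \ell_n)$ and, via \eqref{eq:7:epsilonstar} together with $n_0 K''_{\delta_n}(s^*) \asymp n$, also $|f_n| \to 0$ (both at rates regularly varying with strictly negative index since $r < 1$). For $y \in [-T, T]$ the exponent $y e_n - y^2 f_n/2$ tends to zero uniformly, so combining with the CLT the bulk integral tends to $\Phi([-T, T])$. For the tail piece $D_n \setminus [-T, T]$, the tilted summands are bounded by $\delta_n$ and nearly centred, so Hoeffding's inequality yields a subgaussian estimate $F_n^{(s^*)}(\{|y| \geq t\}) \leq 2 e^{-c t^2}$ valid throughout $D_n$. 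Since $|f_n| < c$ for large $n$, the integrand is dominated by $e^{|e_n| |y| + c y^2/4}$, so the tail contribution is bounded by $C \int_{|y| \geq T} e^{-c y^2 / 2}\, \ud y = O(e^{-c T^2/2})$. Letting $n \to \infty$ first and then $T \to \infty$ gives the claim.

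The principal obstacle will be the tail step: one needs a subgaussian bound for $F_n^{(s^*)}$ that is uniform in $x_n \in H_n$ and $j_n \in J_n$ and holds on the entire range of $D_n$, which can extend to polynomial scales in $n$ when $r > 2/3$ because $w_n - y_n$ is then of order $n R'(d_n)$. Moreover, since $R$ is regularly varying of index $r < 1$ one typically has $R''(x_n(s^*)) < 0$, so $f_n$ may be negative and the quadratic term $-y^2 f_n / 2$ becomes a slowly growing exponential that the subgaussian decay of $F_n^{(s^*)}$ must beat; keeping track of this uniformly in $s^*$ (which itself varies with $x_n, j_n$) is the delicate bookkeeping that the proof has to carry out.
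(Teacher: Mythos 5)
Your overall plan — interpret the integral as a tilted expectation, use a CLT for the bulk, and a subgaussian bound for the tails — matches the structure of the paper's proof, and your discussion of the negative sign of $R''$ and the consequent sign of $f_n$ correctly identifies a real subtlety. However, both of the two probabilistic inputs you invoke are flawed, and for the same reason: you are treating $\delta_n$ as if it were $O(1)$, when in fact $\delta_n = \delta d_n \sim n^{1/r}$ is polynomially large.

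Concretely, in the Lindeberg step you assert $\delta_n / \sqrt{n_0 K''_{\delta_n}(s^*)} \to 0$. But $\sqrt{n_0 K''_{\delta_n}(s^*)} \asymp \sqrt{n}$ while $\delta_n \asymp n^{1/r}$, and $1/r > 1/2$ for every $r \in (0,1)$, so this ratio tends to infinity. Boundedness by $\delta_n$ therefore does not give the Lindeberg condition; one must use the actual (stretched-exponential) tail of the truncated law, not the worst-case a.s. bound. The same issue sinks the Hoeffding step: under $\P_{s^*,\delta_n}$ the summands are bounded above by $\delta_n$ (and not at all from below by any $n$-independent constant), so Hoeffding/Bernstein would produce a subgaussian constant of order $\delta_n^2$, i.e.\ $F_n^{(s^*)}(\{|y| \geq t\}) \lesssim e^{-c t^2 / \delta_n^2}$, which is useless on the range $|y| \lesssim \theta_n(x_n - y_n) \asymp n^{3/2 - 1/r}$ that you actually need for $r > 2/3$. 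This is exactly the region where you yourself flag the ``principal obstacle.''

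The paper circumvents this by never using an a.s.\ bound at all. Instead it applies an exponential Markov inequality under the already-tilted measure $\P_{s^*,\delta_n}$ with a second tilting parameter $h = \epsilon y / \sqrt{n_0}$, yielding
\begin{equation*}
1 - F_n^{(s^*)}(y) \leq (1+o(1)) \exp\left\{ n_0\bigl(K_{\delta_n}(s^*+h) - K_{\delta_n}(s^*) - h K'_{\delta_n}(s^*)\bigr) - \sqrt{n_0 K''_{\delta_n}(s^*)}\,hy \right\},
\end{equation*}
and then invokes Lemma~\ref{lem:4:asK}, which says that $K_{\delta_n}$ is uniformly close to the polynomial $K$ on $[0, \chi R'(d_n)]$. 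Since $s^* + h$ stays in this range for $\epsilon$ small, the second-order Taylor term of $K_{\delta_n}$ is uniformly $\asymp h^2/2 = \epsilon^2 y^2/(2n_0)$, which after multiplying by $n_0$ gives the genuine subgaussian bound $1-F_n^{(s^*)}(y) \leq \const\, e^{-y^2/\const_\epsilon}$ with an $n$-independent constant. In other words, the whole point of the truncation machinery and Lemma~\ref{lem:4:asK} is that the truncated moment generating function behaves as if the variables were bounded by $O(1)$, even though they are not; your proposal discards precisely this and replaces it with the crude a.s.\ bound. After that correction the rest of your argument (truncate at fixed $T$, send $n \to \infty$ then $T \to \infty$, use $|f_n| \to 0$ to have the subgaussian decay beat the slowly growing quadratic exponent) would go through, and is a legitimate variant of the paper's integration-by-parts bookkeeping.
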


	\begin{proof}
		Firstly, for $r \geq 2/3$, $y e_n -y^2 f_n/2\to 0$ uniformly in $y \in D_n$, $x_n \in H_n$, and $j_n \in J_n$ and so the claim follows from the central limit theorem. We thus treat only the case $r >2/3$. 
		We will utilize the following estimate, for $y \in [0, \theta_n(x_n-y_n)]$ and $h>0$  we have
	\begin{multline*}
		1-F_n^{(s^*)} (y)  = e^{-n_0 K'_{\varepsilon_n}(s^*)} \E_{\varepsilon_n}[ e^{s^* S_{n_0}} \1_{\{ \theta_n(S_{n_0}) >y \}}] \leq \\
			(1+o(1))\exp\left\{ n_0(K_{\varepsilon_n}(s^*+h) - K_{\varepsilon_n}(s^*) -hK'_{\varepsilon_n}(s^*)) - \sqrt{n_0 K''_{\varepsilon_n}(s^*)} hy  \right\}.
	\end{multline*}
	Take $h  = \epsilon y/\sqrt{n_0}$ for $\epsilon>0 $ sufficiently small so that we can invoke Lemma~\ref{lem:4:asK} to get
	\begin{equation}\label{eq:7:upper}
		1-F_n^{(s^*)} (y)  \leq \const \exp\left\{ - y^2 / \const_\epsilon  \right\},
	\end{equation}
	for some constant $\const_\epsilon$ that depends on $\epsilon$.
	Similarly, for $y \in [\theta_n(x_n-w_n),0]$
	\begin{equation*}
		F_n^{(s^*)} (y)  \leq \const \exp\left\{ - y^2 / \const_\epsilon  \right\}.
	\end{equation*}

		Denote $D_n^{+} = \left[ 0, \theta_n(x_n-y_n)\right]$.
		We will first show that
		\begin{equation*}
			\sup_{z_n \in I_n} \sup_{j_n \in J_n}  \left| \int_{D_n^+} e^{y e_n -y^2 f_n/2}  \: F_n^{(s^*)}(\ud y)  - \frac 12\right| \to 0.
		\end{equation*}
		Using the integration by parts formula we arrive at
		\begin{multline*}
			 \int_{D_n^+} e^{y e_n -y^2 f_n/2}  \: F_n^{(s^*)}(\ud y)  =  \int_{D_n^+} (e_n -y f_n) e^{y e_n -y^2f_n /2} (1- F_n^{(s^*)}( y) )\: \ud y \\
				- e^{\theta_n(x_n-y_n) e_n -\theta_n(x_n-y_n)^2f_n /2} (1- F_n^{(s^*)}( \theta_n(x_n-y_n))) + (1- F_n^{(s^*)}( 0)) .
		\end{multline*}
		One can show that the first two terms on the right hand side are negligible using the estimate~\eqref{eq:7:upper}. Indeed the behaviour of the second term follows directly from~~\eqref{eq:7:upper}. 
		The integral can be treated by considering $y > n^{ 2/r-2-\epsilon}$ and $y< n^{2/r-2-\epsilon}$. In the first case the 
		pre factor $e_n +f_ny$ is bounded via $n^{-\epsilon}$. In the second case the exponent tends to $-\infty$.
		Our claim follows since $F_n^{(s^*)}( 0) \to 1/2$.
		In a similar fashion we treat $D_n^{-} = \left[\theta_n(x_n-w_n),  0 \right]$, we write
		\begin{multline*}
			 \int_{D_n^-} e^{y e_n -y^2 f_n/2}  \: F_n^{(s^*)}(\ud y)  =  \int_{D_n^-} (e_n -y f_n) e^{y e_n -y^2f_n /2} (1- F_n^{(s^*)}( y) )\: \ud y \\
				- e^{\theta_n(x_n-w_n) e_n -\theta_n(x_n-w_n)^2f_n /2} (F_n^{(s^*)}( \theta_n(x_n-w_n))) + F_n^{(s^*)}( 0) .
		\end{multline*}
		and conclude that the limit of the right hand side is $1/2$. 

\end{proof}

	\begin{proof} [Proof of Theorem~\ref{thm:5:LD}]
		Our considerations up to this point show that it is enough to justify that $C_n$ given in~\eqref{eq:7:Cn} has the desired asymptotics. Note that with $\varepsilon_n^*$ be given 
		via~\eqref{eq:7:epsilonstar} one has
		\begin{multline*}
			 \int_{D_n} e^{y e_n +y^2 f_n^\downarrow/2}  \: F_n^{(s^*)}(\ud y) \leq  \\ C_n  e^{n_0s^*K'_{\varepsilon_n}(s^*)  - n_0K_{\varepsilon_n}(s^*) + R(x_n(s^*)) } \leq \int_{D_n} e^{y e_n +y^2 f^\uparrow_n/2}  \: F_n^{(s^*)}(\ud y) 
		\end{multline*}
		where $f^\downarrow_n =(R''(x_n(s^*)) +\varepsilon_n^*) n_0K''_{\varepsilon_n}(s^*)$ and $f_n^\uparrow =(R''(x_n(s^*)) -\varepsilon_n^*) n_0K''_{\varepsilon_n}(s^*)$. The claim now follows from Lemma~\ref{lem:7:last} and the fact that
		\begin{equation*}
			n_0s^*K'_{\varepsilon_n}(s^*)  - n_0K_{\varepsilon_n}(s^*) + R(x_n(s^*))  = I(x_n) + O\left(j_n^*R'(x_n)^2\right).
		\end{equation*}
	\end{proof}

\appendix

\section{Appendix}

Here we present the proof of the auxiliary lemmas used in the article. On a few occasions will make use of the following integration by parts formula: 
for $A<B$ and a continuously differentiable function $\psi$,
\begin{multline}\label{eq:4:intbyparts}
	\E \left[\psi(X)\1_{\left\{ X \in \left(A, B\right] \right\}} \right] =\\ \int_{A}^{B} \psi'(s) \P[X>s]\ud s + \psi(A)\P\left[X >A\right] - \psi(B)\P[X>B].
\end{multline}

In the proof of Lemma~\ref{lem:4:asK} it will be convenient to first analyse the moment generating function of the truncated random variables
\begin{equation*}
	F_{\varepsilon_n}(s) = \E_{\varepsilon_n} \left[  \exp\{ sX_1\} \right],
\end{equation*}
where $\varepsilon_n=\varepsilon d_n$, with $\varepsilon \in \left( 2^{-r}, 1 \right)$.
We begin with a result which will allow to control $F_{\varepsilon_n}$. Recall that $\kappa$ is the smallest integer greater than or equal to $(2-r)/(1-r)$. Recall that under Assumption~\ref{as:RW} the 
function $R(x)x^{-d}$ is decreasing for some $d \in (0,1)$. 

\begin{lemma}\label{lem:a:Funiform}
	Let Assumption~\ref{as:RW} be in force and let $d\in (0,1)$ be such that $R(x)x^{-d}$ is decreasing. Then for any $k >\kappa$ such that $\E[|X|^k]<\infty$ 
	and $\chi \in (0, 1/d)$ we have, as $n \to \infty$, 
	\begin{equation*}
		\sup_{0 \leq s \leq  \chi R'(\varepsilon_n) } s^{k} F_{\varepsilon_n}^{(k)}(s) = o\left( n^{-1}\right).
	\end{equation*}
\end{lemma}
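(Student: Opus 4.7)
The approach is to decompose $\E_{\delta_n}[X^k e^{sX}]$ based on whether $X$ is non-positive, in $(0,x_0]$, or in $(x_0,\delta_n]$, with $x_0$ as in Assumption~\ref{as:RW}. On the first two ranges the trivial bounds $|X^k e^{sX}| \leq |X|^k$ (for $X \leq 0$, $s\geq 0$) and $|X^k e^{sX}| \leq x_0^k e^{sx_0}$ give contributions bounded by $2\E[|X|^k] < \infty$ (by hypothesis) and $O(1)$ respectively. These contribute $O(s^k) = O(R'(\delta_n)^k)$ to $s^k F_{\delta_n}^{(k)}(s)$; since $R'(\delta_n)$ is regularly varying in $n$ with negative index $(r-1)/r$, and $k > \kappa = \lfloor(2-r)/(1-r)\rfloor$ implies $k > (2-r)/(1-r)$ so that $k(1-r)/r > (2-r)/r > 1$ strictly, the slowly varying factors do not spoil this and the two pieces are already $o(n^{-1})$.

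The essential work is the tail contribution $\E[X^k e^{sX}\1_{x_0<X\leq\delta_n}]$. I would apply the integration-by-parts identity~\eqref{eq:4:intbyparts} with $\psi(x) = x^k e^{sx}$, use $\P[X > x] \leq C e^{-R(x)}$ for $x > x_0$, and integrate by parts once more against $-\mathrm{d}e^{-R(x)}$ to obtain
\[
\E\bigl[X^k e^{sX}\1_{x_0 < X \leq \delta_n}\bigr] \;\leq\; C\, A_k(s) + (\text{boundary terms}), \qquad A_k(s) := \int_{x_0}^{\delta_n} x^k R'(x)\, e^{sx - R(x)}\,dx.
\]
The boundary term at $\delta_n$ carries $\delta_n^k e^{s\delta_n - R(\delta_n)}$; using Karamata's identity $\delta_n R'(\delta_n) \sim r R(\delta_n)$, which gives $s\delta_n \leq \chi r R(\delta_n)(1+o(1))$, together with $\chi r < 1$ (since $\chi < 1/d$ and the condition of Assumption~\ref{as:RW} that $R(x)x^{-d}$ is eventually decreasing forces $d \geq r$), this boundary term is polynomial-in-$n$ times exponentially small and remains $o(n^{-1})$ after multiplication by $s^k$.

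To bound $A_k(s)$ uniformly in $s$, change variables $u = R(x)$ and then $v = u/R(\delta_n)$:
\[
A_k(s) = R(\delta_n)\int_{R(x_0)/R(\delta_n)}^{1} \bigl(R^{-1}(vR(\delta_n))\bigr)^{k}\, \exp\bigl\{s R^{-1}(vR(\delta_n)) - vR(\delta_n)\bigr\}\,dv.
\]
Regular variation of $R^{-1}$ (index $1/r$) together with $s\delta_n \leq \chi r R(\delta_n)(1+o(1))$ identifies the integrand, up to slowly varying factors, as $\delta_n^k\, v^{k/r}\, e^{R(\delta_n) g_\chi(v)}$ with $g_\chi(v) := \chi r\, v^{1/r} - v$. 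Because $\chi r < 1$, direct calculation shows $g_\chi$ attains its maximum on $[0,1]$ at $v = 0$, with $g_\chi(0) = 0$ and $g_\chi(v) \leq -v/2$ on a fixed neighbourhood of $0$; this reduces the $v$-integral to a Laplace-type estimate yielding $\int_0^1 v^{k/r} e^{R(\delta_n) g_\chi(v)}\,dv \leq C R(\delta_n)^{-k/r-1}$. Hence $A_k(s) \leq C\, \delta_n^k R(\delta_n)^{-k/r}$, which is slowly varying in $n$, and multiplying by $s^k = O(n^{-k(1-r)/r}\cdot\text{slv})$ together with the strict inequality $k(1-r)/r > 1$ gives $s^k F_{\delta_n}^{(k)}(s) = o(n^{-1})$ uniformly in $s \in [0, \chi R'(\delta_n)]$.

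The main obstacle is making the Laplace analysis uniform in $s$ and rigorously tracking the slowly varying corrections from the Karamata representations of $R$, $R'$ and $R^{-1}$. Because the integral concentrates in a window of width $1/R(\delta_n)$ around $v = 0$, standard Potter bounds suffice to show these corrections do not destroy the Laplace scale, but care is required near $v = 0$ where the asymptotic equivalence $R^{-1}(vR(\delta_n)) \sim v^{1/r}\delta_n$ is no longer valid once $vR(\delta_n)$ stays bounded; the contribution from that sliver must be handled separately using $(R^{-1}(vR(\delta_n)))^k \leq (R^{-1}(M))^k$ and the bare exponential $e^{-vR(\delta_n)(1+o(1))}$.
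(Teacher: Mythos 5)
Your proposal is correct and follows the same overall line as the paper: split $\E_{\delta_n}[X^k e^{sX}]$ into a bounded part and a tail part, apply the integration‑by‑parts identity~\eqref{eq:4:intbyparts} on the tail, kill the boundary term at $\delta_n$ via the domination $s\delta_n \lesssim \chi d R(\delta_n)$ (you get $\chi r R(\delta_n)(1+o(1))$ via Karamata, the paper gets $\chi d R(\delta_n)$ directly from the eventual monotonicity of $R(x)x^{-d}$; the two are interchangeable since $r\le d$), and then control the remaining integral by regular variation of $R$.

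The one place where you genuinely diverge from the paper is the treatment of the main integral. The paper factors $d_n^k$ out of $\psi'(t)$, performs the linear substitution $t=\delta_n u$, and uses $R(\delta_n u)\geq(1-\epsilon)u^r R(\delta_n)$ to obtain an exponentially small bound on $[\delta+o(1),1]$ (the low range near $0$ is handled separately, which is why they are allowed to restrict the scaled integral). You instead substitute $u=R(x)$, $v=u/R(\delta_n)$ and run a Laplace argument on $g_\chi(v)=\chi r v^{1/r}-v$, which yields the polynomial bound $A_k(s)\lesssim \delta_n^k R(\delta_n)^{-k/r}$ rather than an exponential one; this is weaker but, combined with $s^k\lesssim R'(\delta_n)^k$ and the index inequality $k(1-r)/r>1$ forced by $k>\kappa$, still gives $o(n^{-1})$. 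Your route is arguably cleaner in that it makes the exact exponent count transparent and isolates the near-$v=0$ sliver where regular variation of $R^{-1}$ is not uniform. One small point to tighten: the passage from $\int\psi'(t)\P[X>t]\,dt$ to the claim ``$\leq C A_k(s)+$ boundary terms'' needs a little more care, since $\psi'(t)=(kt^{k-1}+st^k)e^{st}$ contributes both an $st^k$ term (which you bound by $\chi R'(t)t^k$ using $s\le\chi R'(\delta_n)\le\chi R'(t)$, yielding $\chi A_k(s)$) and a $kt^{k-1}$ term; the latter is controlled by $A_{k-1}(s)/R'(\delta_n)$, and the final index count then needs $(k-1)(1-r)/r>1$, which is still guaranteed by $k>\kappa\geq 1/(1-r)$ — so no gap, but the bookkeeping should be made explicit.
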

\begin{proof}
	As $X$ is bounded under $\P_{\varepsilon_n}$,
	\begin{equation*}
		F_{\varepsilon_n}^{(k)}(s) = \E_{\varepsilon_n} \left[ X^{k} e^{s X} \right].
	\end{equation*}
	Since $\P[X \leq \varepsilon_n]\to 1$ it is sufficient to estimate
	\begin{equation*}
		 \E \left[ X^{k} e^{s X} \1_{\{ X \leq \varepsilon_n\}} \right] =  \E \left[ X^{k} e^{s X} \1_{\{ X \leq 0 \}} \right] +  
		 	\E \left[ X^{k} e^{s X} \1_{\{ 0<X \leq \varepsilon_n\}} \right]
	\end{equation*}
	The first integral can be bounded from above via $ \E \left[ |X|^{k} \right]$. We will apply the integration by parts formula~\eqref{eq:4:intbyparts} for the second expectation  with $\psi(x) = x^{k}e^{s x} $, 
	$A =0$ and $B=\varepsilon_n$. Before we present the next bound, let us note that
	\begin{equation}\label{eq:a:unif1}
		\sup_{0 \leq s \leq  \chi R'(\varepsilon_n) } \left [s\varepsilon_n -R(\varepsilon_n)\right] \leq (\chi d -1) R(\varepsilon_n).
	\end{equation}
	With this estimate we infer that for sufficiently large $n$,
	\begin{equation*}
		\psi(\varepsilon_n) \P[X>\varepsilon_n] \leq  e^{- R(d_n)/\const}.
	\end{equation*}
	For the integral we can write
	\begin{multline*}
			  \int_0^{\varepsilon_n }\psi'(t) \P[X>t]\ud t 
			  	 \leq \const\cdot d_n^k \int_{0}^{\varepsilon_n}  \exp \left\{ s t - R(t) \right\} \ud t  \\
				 \leq\const\cdot  d_n^{k+1}  \int_{\delta + o(1)}^1\exp \left\{   s \varepsilon_nt-R(\varepsilon_n t)\right\} \ud t 
		\end{multline*}
		As $R$ is regularly varying with index $r \in (0,1)$, $R(\varepsilon_n t) \sim t^r R(\varepsilon_n)$ almost uniformly in $t \in (0,\infty)$, i.e. uniformly in $t \in G$ for any compact $G \subseteq (0,+\infty)$.
Thus for arbitrarily small $\varepsilon>0$ there is $n$ large enough such that $R(\varepsilon_n t) \geq t^r R(\varepsilon_n)(1-\varepsilon)$ for all $t \in [\delta/2, 1]$. Thus the integral can be bounded in the following way
		\begin{equation*}
			 \int_0^{\varepsilon_n }\psi'(t) \P[X>t]\ud t \leq C d_n^{k+1} \exp \{ (\chi d -1+\varepsilon) R(\varepsilon_n) \}.
		\end{equation*}
		Provided that $\varepsilon$ is sufficiently small the right hand side decays exponentially fast in $n$ which in turn secures our claim.
\end{proof}

We now turn to the proof of Lemma~\ref{lem:4:asK} which we recall here for convenience.

	\begin{lemma}%\label{lem:a:asK}
	Let Assumption~\ref{as:RW} be in force. Let $d\in (0,1)$ be such that $R(x)x^{-d}$ is decreasing and let $\chi \in (0, 1/d)$.
	Then for any $i \in \{0, 1,2\}$, 
		\begin{equation*}
			\sup_{0 \leq s \leq  \chi R'(\varepsilon_n) } \left| K_{\varepsilon_n}^{(i)}(s) - K^{(i)}(s) \right| = o \left( \frac 1n \right).
		\end{equation*}
	\end{lemma}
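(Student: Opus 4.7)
The plan is to expand $K_{\delta_n}(s) = \log F_{\delta_n}(s)$ as a degree-$\kappa$ Taylor polynomial plus Lagrange remainder, to show the polynomial part is uniformly close to $K(s)$ with error $o(1/n)$, and that the remainder is also $o(1/n)$ uniformly for $s \in [0, \chi R'(\delta_n)]$. The cases $i=1,2$ follow by differentiating the same expansion, the bottleneck in every case being control of $K_{\delta_n}^{(\kappa+1)}$.

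First I would establish moment convergence $\E_{\delta_n}[X_1^j] - \E[X_1^j] = o(1/n)$ for $j \leq \kappa$. Because $\P_{\delta_n}$ truncates only the upper tail, this difference involves only $\P[X_1 > \delta_n]$ and $\E[X_1^j \1_{\{X_1 > \delta_n\}}]$; since $R(\delta_n) \sim \delta^r R(d_n) = \delta^r n \log m$ with $\delta^r > 1/2$, both quantities are exponentially small in $n$. The truncated cumulants $\tilde k_j^{(n)} := K_{\delta_n}^{(j)}(0)$ satisfy the analogue of the recursion~\eqref{eq:4:c1} with moments replaced by truncated moments, so an induction on $j \leq \kappa$ yields $\tilde k_j^{(n)} - k_j = o(1/n)$. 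Writing Taylor's formula with Lagrange remainder, $K_{\delta_n}(s) = \sum_{j=1}^\kappa \tilde k_j^{(n)} s^j/j! + R_n(s)$, and comparing to $K(s)$ (with $k_1 = 0$), the polynomial difference is $o(1/n)$ uniformly since every $s^j \leq (\chi R'(\delta_n))^j$ is bounded.

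The main obstacle is bounding the remainder $R_n(s) = K_{\delta_n}^{(\kappa+1)}(\xi) s^{\kappa+1}/(\kappa+1)!$, since Lemma~\ref{lem:a:Funiform} cannot be invoked at order $\kappa+1$ (the moment $\E[|X_1|^{\kappa+1}]$ need not be finite). Via Fa\`a di Bruno, $K_{\delta_n}^{(\kappa+1)}$ decomposes as a sum over set partitions $\pi$ of $\{1,\ldots,\kappa+1\}$ of products $\prod_{B\in\pi} F_{\delta_n}^{(|B|)}(s)\, F_{\delta_n}(s)^{-|\pi|}$. Partitions with at least two blocks involve only derivatives $F_{\delta_n}^{(m)}$ of order $m \leq \kappa$, which are uniformly bounded by combining the finite moment $\E[|X_1|^m]$ to control $X_1 < 0$ with an integration-by-parts bound in the spirit of Lemma~\ref{lem:a:Funiform} to control $X_1 \in [0,\delta_n]$; their total contribution is $O(s^{\kappa+1}) = O(R'(\delta_n)^{\kappa+1}) = o(1/n)$, because $\kappa > 1/(1-r)$ forces $(\kappa+1)(1-r)/r > 1$. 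The only delicate term is the singleton partition, proportional to $s^{\kappa+1} F_{\delta_n}^{(\kappa+1)}(s)$. Splitting at $X_1 = 0$: for $X_1 \in [0,\delta_n]$, I would apply~\eqref{eq:4:intbyparts} exactly as in the proof of Lemma~\ref{lem:a:Funiform} together with the bound $s\delta_n - R(\delta_n) \leq (\chi d - 1) R(\delta_n)$ to obtain exponential-in-$n$ decay; for $X_1 < 0$, the universal estimate $|X_1|^{\kappa+1} e^{sX_1} \leq |X_1|^\kappa/(es)$ (valid for $s > 0$) bounds this contribution by $\E[|X_1|^\kappa]/(es) < \infty$, yielding $s^{\kappa+1} \cdot O(1/s) = O(R'(\delta_n)^\kappa) = o(1/n)$ since $\kappa(1-r)/r > 1$.
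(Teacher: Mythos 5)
Your route is related to but slightly different from the paper's: the paper Taylor-expands $F_{\delta_n}(s)$ directly (to degree $\kappa$, controlling the integral remainder via Lemma~\ref{lem:a:Funiform}), then passes to $\log F_{\delta_n}$ by comparing $\log F$ with $K$ through the cumulant recursion; you instead expand $K_{\delta_n}=\log F_{\delta_n}$ directly and control its $(\kappa+1)$-th derivative by Fa\`a di Bruno. The two are morally equivalent, and your observation that Lemma~\ref{lem:a:Funiform} cannot be invoked for $F_{\delta_n}^{(\kappa+1)}$ because $\E[|X|^{\kappa+1}]$ is not guaranteed by Assumption~\ref{as:RW} is sharp and worth making explicit.

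However, the workaround you give for the singleton block has a genuine gap. You choose the Lagrange form $R_n(s)=\tfrac{s^{\kappa+1}}{(\kappa+1)!}K_{\delta_n}^{(\kappa+1)}(\xi)$ with some unknown $\xi\in(0,s)$, and then control the contribution from $X_1<0$ by the bound $|X_1|^{\kappa+1}e^{\xi X_1}\le |X_1|^\kappa/(e\xi)$. This gives $|K_{\delta_n}^{(\kappa+1)}(\xi)|_{\text{neg}}=O(1/\xi)$, \emph{not} $O(1/s)$ as you write. Since Lagrange's theorem gives no lower bound on $\xi$ — it can be arbitrarily close to $0$ — the product $s^{\kappa+1}\cdot O(1/\xi)$ is not under control, and the conclusion $O(R'(\delta_n)^\kappa)$ does not follow. (Indeed, when $\E[|X|^{\kappa+1}]=\infty$, $F_{\delta_n}^{(\kappa+1)}(\xi)\to\infty$ as $\xi\to 0^+$, so the supremum over $\xi\in(0,s)$ is genuinely unbounded.) To make your computation rigorous you should replace the Lagrange form by the integral form of the remainder and exchange the order of integration: writing $\int_0^s\tfrac{(s-t)^\kappa}{\kappa!}\E_{\delta_n}[X_1^{\kappa+1}e^{tX_1}\1_{\{X_1<0\}}]\,\ud t = \E_{\delta_n}\bigl[X_1^{\kappa+1}\1_{\{X_1<0\}}\int_0^s\tfrac{(s-t)^\kappa}{\kappa!}e^{tX_1}\,\ud t\bigr]$, bounding the inner integral by $\tfrac{s^\kappa}{\kappa!}\min(s,1/|X_1|)$, and observing that $\E[|X_1|^{\kappa+1}\min(s,1/|X_1|)]\to 0$ by dominated convergence, gives a remainder of order $o(s^\kappa)=o(1/n)$ using only $\E[|X|^\kappa]<\infty$. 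The same care is needed when you ``differentiate the same expansion'' for $i=1,2$, where the available powers of $s$ are smaller and the margin to $o(1/n)$ is tighter.
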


	\begin{proof}[Proof of Lemma~\ref{lem:4:asK}]
		%We will show the result for $i=0$. The cases $i=1$ and $i=2$ follow a similar scheme.  
		Since under $\P_{\varepsilon_n}$, $X$ is bounded a.s. for any fixed $s$,
		\begin{align*}
			F_{\varepsilon_n}(s) = \sum_{j=0}^{\kappa} \frac{s^j}{j!} \E_{\varepsilon_n}\left[X^j\right] + \int_0^s \frac{(s-t)^\kappa}{(\kappa+1)!} F_{\varepsilon_n}^{(\kappa+1)}(t) \ud t, %= \widehat{F}_n(s)+\frac{s^{\kappa+1}}{(\kappa+1)!} F^{(\kappa+1)}(s\theta).
		\end{align*}
		where for any fixed $j$, $\E_{\varepsilon_n}\left[X^j\right]=\E[X^j]+ o\left(1/n \right)$. Let
		\begin{equation*}
			 F(s) =  \sum_{j=0}^{\kappa} \frac{s^j}{j!} \E\left[X^j\right] .
		\end{equation*}
		By Lemma~\ref{lem:a:Funiform} and the choice of $\kappa$,  for any $i \in \{0, 1,2\}$, 
		\begin{equation}\label{eq:a:lasttt}
			\sup_{0 \leq s \leq  \chi R'(\varepsilon_n) } |  F^{(i)}(s) -  F_{\varepsilon_n}^{(i)}(s)|  = o\left(1/n\right).
		\end{equation}

		We continue with
		\begin{equation}\label{eq:4:KF}
			K_{\varepsilon_n}(s) = \log(F_{\varepsilon_n}(s)) = \log(F(s)) + \widehat{\Delta}_n(s),
		\end{equation}
		with $\widehat{\Delta}_n(s)$ such that $\sup_{0 \leq s \leq  \chi R'(\varepsilon_n) }|\widehat{\Delta}_n(s)|= o\left( 1/n\right)$.
		Define $\phi(s) = \log(F(s))$. Following~\cite{smith:1995} we have $\phi(0)=0$ and
		\begin{equation*}
			\frac{\ud }{\ud s}F(s) = \frac{\ud}{\ud s} e^{\phi(s)} = \phi'(s) e^{\phi(s)} = \phi'(s)F(s).
		\end{equation*}
		We see that $\phi'(0)=0$ and after differentiating $m-1$ times, where $m\leq \kappa$, we get
		\begin{equation*}
			 \frac{\ud^{m}}{\ud s^{m}}\phi(0) = \E[X^m] - \sum_{j=1}^{m-1}  \binom{m -1}{ j}  \frac{\ud^{m-j}}{\ud s^{m-j}}\phi(0) \cdot \frac{\ud^{j}}{\ud s^{j}}F(0).
		\end{equation*}
		Comparing the above relation with~\eqref{eq:4:c1} allows us to conclude that $\frac{d^{m}}{ds^{m}}\phi(0) = k_m$ for $m \leq \kappa$.
		Using the Taylor expansion of $\phi$ we get finally that
		\begin{equation*}
			\log F(s) = \phi(s) =\sum_{j=2}^{\kappa } \frac{k_j}{j!}s^j + \frac{s^{\kappa+1}}{(\kappa+1)!}\phi^{(\kappa+1)}(\vartheta s)  = K(s) + \frac{s^{\kappa+1}}{(\kappa+1)}\phi^{(\kappa+1)}(\vartheta s)
		\end{equation*}
		for some $\vartheta \in [0,1]$ with 
		\begin{equation*}
			\sup_{0 \leq s \leq \chi R'(\varepsilon_n)  } \left| \frac{s^{\kappa+1}}{(\kappa+1)}\phi^{(\kappa+1)}( s) \right| = o\left(\frac 1n\right)
		\end{equation*}
		by the choice of $\kappa$. This in combination with~\eqref{eq:4:KF} yields our claim for $i=0$. The claims for $i=1$ and $i=2$ follow from a direct comparison of $K^{(i)}$ and 
		$K^{(i)}_{\varepsilon_n}$ and an appeal to~\eqref{eq:a:lasttt}.
	\end{proof}

\begin{lemma}\label{lem:a:Hbound}
		Let Assumption~\ref{as:RW} be in force for some $r \leq  2/3$.  Then for $y_n=d_n -   (\log n)^2(d_n^2+nR(d_n)^2)/(d_nR(d_n))$, $h_n = R(d_n)/d_n$ and 
		$\varepsilon_n = \delta d_n$ for $\varepsilon \in (2^{-r}, 1)$ the following inequality holds true for sufficiently large $n$, 
		\begin{equation*}
			 \E \left[ e^{h_n X} \1_{\left\{ X \in \left(\varepsilon_n, y_n\right] \right\}} \right] \leq  \exp \left\{ - (\log n)^2(1+nR(d_n)^2/d_n^2) /\const\right\}.
		\end{equation*}
\end{lemma}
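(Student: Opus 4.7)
The strategy is to apply the integration-by-parts identity~\eqref{eq:4:intbyparts} with $\psi(s) = e^{h_n s}$ on $(\delta_n, y_n]$, which gives
\begin{equation*}
\E\left[e^{h_n X}\1_{\{X\in(\delta_n, y_n]\}}\right] = \int_{\delta_n}^{y_n} h_n e^{h_n s}\P[X>s]\,\ud s + e^{h_n \delta_n}\P[X>\delta_n] - e^{h_n y_n}\P[X>y_n].
\end{equation*}
Since $\P[X > s] \leq \const\, e^{-R(s)}$ for $s \geq \delta_n$ (the function $a(\cdot)$ is bounded near infinity) and the last boundary term is negative, the problem reduces to controlling $e^{f(s)}$ with $f(s) := h_n s - R(s)$ on $[\delta_n, y_n]$. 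The prefactor $(y_n - \delta_n) h_n \leq R(d_n) = n \log m$ is polynomial in $n$ and can be absorbed into $\const$, so it suffices to prove that $f$ attains its maximum over $[\delta_n, y_n]$ at the right endpoint $y_n$ and that $f(y_n) \leq -(\log n)^2(1 + n R(d_n)^2/d_n^2)/\const$.

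First I would verify monotonicity. The unique critical point $s^*$ of $f$ satisfies $R'(s^*) = h_n = R(d_n)/d_n$. Karamata's theorem gives $R(d_n) \sim d_n R'(d_n)/r$, so $R'(s^*)/R'(d_n) \to 1/r$, and since $R'$ is regularly varying with index $r-1$, this yields $s^*/d_n \to r^{1/(1-r)}$. For $r \leq 2/3$ we have $r^{1/(1-r)} \leq (2/3)^3 = 8/27$, while the hypothesis $\delta > 2^{-r} \geq 2^{-2/3} \approx 0.63$ forces $\delta_n > s^*$ for all sufficiently large $n$, so $f$ is increasing on $[\delta_n, y_n]$ and $\max_{[\delta_n, y_n]} f = f(y_n)$.

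Next, Taylor-expand $R$ at $d_n$: writing $y_n = d_n - \eta_n$ with $\eta_n = (\log n)^2(d_n^2 + nR(d_n)^2)/(d_n R(d_n))$ and using $h_n d_n = R(d_n)$,
\begin{equation*}
f(y_n) = -(h_n - R'(d_n))\eta_n - \tfrac{1}{2} R''(\xi_n)\eta_n^2, \qquad \xi_n \in (y_n, d_n).
\end{equation*}
Karamata gives $h_n - R'(d_n) \sim (1-r) R(d_n)/d_n$ while $|R''(\xi_n)| = O(R(d_n)/d_n^2)$ by Assumption~\ref{as:RW}. Plugging the form of $\eta_n$ into the linear term,
\begin{equation*}
(h_n - R'(d_n))\eta_n \sim (1-r)(\log n)^2 \bigl(1 + n R(d_n)^2/d_n^2\bigr),
\end{equation*}
which is precisely the target rate of decay. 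The quadratic remainder is the linear term multiplied by a factor of order $\eta_n/d_n$; substituting $R(d_n) = n \log m$ and $d_n = n^{1/r}\ell_d(n)$ gives $\eta_n/d_n = O\bigl((\log n)^2(n^{-1} + n^{2-2/r}/\ell_d(n)^2)\bigr)$, which tends to zero exactly because $r \leq 2/3$ makes $2 - 2/r \leq -1$. Hence $f(y_n) \leq -c(\log n)^2(1 + nR(d_n)^2/d_n^2)$ for some $c > 0$ and all sufficiently large $n$, and the lemma follows.

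The main obstacle is the monotonicity step: one must combine $r \leq 2/3$ with $\delta > 2^{-r}$ to place the interior critical point $s^*$ to the left of $\delta_n$, for otherwise $f$ could attain a much larger maximum inside the interval and the stated bound would fail. The restriction $r \leq 2/3$ is also what ensures $\eta_n = o(d_n)$, so that the quadratic Taylor correction is dominated by the linear term; for $r > 2/3$ a different truncation level $y_n$ is required, which is handled by the companion Lemma~\ref{lem:a:Hbound>23}.
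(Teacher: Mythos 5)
Your proof is correct and follows the same basic skeleton as the paper's (integration by parts with $\psi(s)=e^{h_n s}$, then Taylor expansion of $R$ at $d_n$), but the details differ in an interesting way. The paper bounds the boundary term $\psi(\delta_n)\P[X>\delta_n]$ separately as $\exp\{(\delta-\delta^r+o(1))R(d_n)\}=\exp\{-n/\const\}$, and bounds the integral by a change of variables $s=y_n t$ followed by the inequality $R(y_n t)\geq t R(y_n)(1+o(1))$ for $t\in[\delta+o(1),1]$, obtaining $\const\, n\, e^{\delta\,(h_n y_n-R(y_n))}$; the paper also uses the cruder inequality $R(x)/(xR'(x))\geq 1/d$ from the monotonicity assumption on $R(x)x^{-d}$. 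You instead argue that the exponent $f(s)=h_n s-R(s)$ is increasing on all of $[\delta_n,y_n]$ because the critical ratio $s^*/d_n\to r^{1/(1-r)}$ lies strictly below $\delta$ once $r\leq 2/3$ and $\delta>2^{-r}$, which funnels both the boundary term and the integral through the single estimate $f(y_n)$; and you use Karamata's sharper asymptotic $R(d_n)\sim d_n R'(d_n)/r$ rather than the inequality from the assumption. Your monotonicity observation neatly explains why the hypothesis $\delta>2^{-r}$ and the restriction $r\leq 2/3$ are both needed, whereas in the paper these hypotheses are used in a more scattered fashion. Two small points of hygiene: the phrase ``the unique critical point $s^*$'' should really be replaced by the claim $\sup_{s\in[\delta_n,y_n]}R'(s)<h_n$ for $n$ large, which follows from the uniform convergence $R'(td_n)/R'(d_n)\to t^{r-1}$ on compacts, since $R'$ need not be monotone; and the bound $|R''(\xi_n)|=O(R(d_n)/d_n^2)$ slightly overstates what Assumption~\ref{as:RW} gives (which is $R''(x)=O(x^{r-2+\epsilon})$), but the extra $d_n^\epsilon$ is still killed by $\eta_n/d_n\to 0$, so the conclusion that the quadratic remainder is negligible stands.
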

\begin{proof}
 		We will apply the integration by parts formula~\eqref{eq:4:intbyparts}
		with $\psi(s) = e^{h_n s} $, $A = \varepsilon_n$ and $B=y_n$. We have
		\begin{equation}\label{eq:a:estA}
			 \psi(\varepsilon_n)\P\left[X  > \varepsilon_n\right]=  \exp \left(\delta R(d_n) - R(\delta d_n) \right) \leq \exp \left\{ -n/\const \right\},
		\end{equation} 
		since $R(\varepsilon d_n) \sim \varepsilon^rR(d_n) \sim \varepsilon^rn$ and $\varepsilon\in (0,1)$.
		Now note that since $R(x)x^{-d}$ is assumed to be eventually decreasing for some $d\in(0,1)$, we have $R(x)/(R'(x)x)> 1/d>1$ for sufficiently large $x$ and thus
		\begin{equation}\label{eq:a:dif1}
			 h_n y_n-R(y_n) \leq (1-1/d)(\log n)^2(1+ n R(d_n)^2/d_n^2+o(1))
		\end{equation}
		which in turn gives
		\begin{equation*}
			\psi(y_n) \P[X>y_n]\leq \const \exp\left\{- (\log n)^2(1+nR(d_n)^2/d_n^2) /\const  \right\}.
		\end{equation*}
		We finally consider the integral on the r.h.s. of \eqref{eq:4:intbyparts} for which we have
		\begin{align*}
			  \int_{\varepsilon_n }^{y_n} \psi'(s) \P[X>s]\ud s 
			  	& = h_n \int_{\varepsilon_n}^{y_n}  \exp \left\{ h_n s - R(s) \right\} \ud s \nonumber \\
				& \leq\const\cdot  n  \int_{\delta + o(1)}^1\exp \left\{  h_ny_ns-R(y_ns)\right\} \ud s \nonumber\\
								& \leq\const\cdot  n  \int_{\delta + o(1)}^1\exp \left\{  s(h_n y_n-R(y_n))\right\} \ud s,
		\end{align*}
		where in the last line we used the fact that $R(s y_n) \sim s^rR(y_n)$ locally uniformly in $s\in (0, \infty)$.
		We see that if we plug in~\eqref{eq:a:dif1} into the integral and use the fact that $s$ is bounded away from $0$ we will arrive at
		\begin{equation*}
			 \int_{\varepsilon_n}^{y_n} \psi'(s) \P[X>s]\ud s \leq \exp\left\{- (\log n)^2(1+nR(d_n)^2/d_n^2) /\const  \right\}
		\end{equation*}
		which in turn yields our claim.
\end{proof}

\begin{lemma}\label{lem:a:Hbound>23}
		Let Assumption~\ref{as:RW} be in force for $r > 2/3$. Then for $y_n=d_n-nR'(d_n)$ and $h_n = R(d_n)/d_n$ the following inequality holds true 
		\begin{equation*}
			 \E \left[ e^{h_n X} \1_{\left\{ X \in \left(\varepsilon_n, y_n\right] \right\}} \right] \leq  \exp \left\{ \frac{nR(d_n)R'(d_n)}{d_n} \left(\frac{R'(d_n)d_n}{R(d_n)} -1\right)(1+o(1)) \right\}.
		\end{equation*}
\end{lemma}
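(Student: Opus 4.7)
The plan is to follow the structure of the proof of Lemma~\ref{lem:a:Hbound} but with a sharper estimate of the integral. Applying integration by parts~\eqref{eq:4:intbyparts} with $\psi(s) = e^{h_n s}$, $A = \delta_n$ and $B = y_n$ yields
\begin{equation*}
\E\left[e^{h_n X}\1_{\{X\in(\delta_n, y_n]\}}\right] = \int_{\delta_n}^{y_n} h_n e^{h_n s}\P[X>s]\,\ud s + e^{h_n \delta_n}\P[X>\delta_n] - e^{h_n y_n}\P[X>y_n].
\end{equation*}
The last (negative) term can be dropped. The middle term equals $a(\delta_n)\exp(\delta R(d_n) - R(\delta d_n))$, and since $R(\delta d_n)\sim\delta^r R(d_n)$ with $\delta^r > \delta$ for $\delta \in (0,1)$, it is of order $\exp(-cn)$ for some $c>0$, and is negligible compared to the target.

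Set $g(s) = h_n s - R(s)$, so that the integrand equals $h_n a(s) e^{g(s)}$. The key step is to show that $g$ is non-decreasing on $[\delta_n, y_n]$ for large $n$, so that the integrand is bounded by its value at $s = y_n$. Since $R'$ is regularly varying with index $r-1$ and $xR'(x)/R(x) \to r$ by Karamata's theorem, the uniform asymptotic
\begin{equation*}
R'(s)/h_n \to r(s/d_n)^{r-1}, \qquad s/d_n\in[\delta,1],
\end{equation*}
holds as $n\to\infty$. The right-hand side is maximised at $s/d_n = \delta$, with limiting value $r\delta^{r-1}$. A direct calculus check (monotonicity of $\log r - r(r-1)\log 2$ on $(0,1)$) shows $2^{-r} > r^{1/(1-r)}$ for every $r \in (0,1)$, so the hypothesis $\delta > 2^{-r}$ forces $r\delta^{r-1} < 1$, whence $R'(s) \leq h_n$ and $g'(s) \geq 0$ on $[\delta_n, y_n]$. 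Consequently the integral is bounded by $C h_n(y_n - \delta_n) e^{g(y_n)} \leq C n \log m \cdot e^{g(y_n)}$; the polynomial prefactor is absorbed into the $(1+o(1))$ of the target exponent because $|g(y_n)|$ grows like $n^{3-2/r}$, which dominates $\log n$ when $r > 2/3$.

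It remains to evaluate $g(y_n)$. Taylor expanding $R$ at $d_n$,
\begin{equation*}
R(y_n) = R(d_n) - nR'(d_n)^2 + \tfrac12(nR'(d_n))^2 R''(\xi), \qquad \xi \in [y_n, d_n],
\end{equation*}
and using $h_n y_n = R(d_n) - nR(d_n)R'(d_n)/d_n$, one obtains
\begin{equation*}
g(y_n) = \frac{nR(d_n)R'(d_n)}{d_n}\left(\frac{R'(d_n)d_n}{R(d_n)} - 1\right) - \tfrac12(nR'(d_n))^2 R''(\xi).
\end{equation*}
Using the bound $R''(x) = O(x^{r-2+\epsilon})$ from Assumption~\ref{as:RW} together with $\xi \sim d_n$ and the representation $d_n = n^{1/r}\ell_d(n)$, the ratio of the error term to the main term is of order $n^{(2r-2+\epsilon)/r}\ell(n)$, which tends to $0$ for any $\epsilon < 2(1-r)$ since $r < 1$. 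Thus $g(y_n)$ equals the claimed expression times $(1+o(1))$, completing the proof.

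The main obstacle is establishing the monotonicity of $g$ on $[\delta_n, y_n]$: without it, the change-of-variables estimate used in Lemma~\ref{lem:a:Hbound} yields only an exponent of the form $\delta\cdot g(y_n)$, which is too loose here because the present target is declared only up to a $(1+o(1))$ factor in the exponent, not an arbitrary multiplicative constant. The precise numerical inequality $\delta > 2^{-r} > r^{1/(1-r)}$ is what ensures that this monotonicity holds on the entire truncation interval.
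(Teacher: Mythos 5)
Your proof is correct, and it actually does more than the paper's terse proof: you identify and close a genuine gap. The paper says the argument ``goes along the exact same lines as for Lemma~\ref{lem:a:Hbound}'', but that argument bounds the integral $h_n\int_{\delta_n}^{y_n}e^{h_ns-R(s)}\,\ud s$ by rescaling $s\mapsto sy_n$ and using $s^r\geq s$, which yields a bound of the form $\exp\{(\delta+o(1))(h_ny_n-R(y_n))\}$ after maximizing the integrand over $s\in[\delta+o(1),1]$. Since $h_ny_n-R(y_n)<0$ and $\delta<1$, this exponent is strictly \emph{larger} than the target $(h_ny_n-R(y_n))(1+o(1))$ by a fixed multiplicative factor; that loss is harmless in Lemma~\ref{lem:a:Hbound}, where the bound only needs an unspecified constant in the exponent, but it is fatal here, where the $(1+o(1))$ precision is exactly what the application in Lemma~\ref{lem:3:cr<1} requires (the expression $y-3/2+1/(2y)$ would no longer be forced negative if a factor $\delta$ entered). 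Your monotonicity argument avoids this: you show $g(s)=h_ns-R(s)$ is increasing on the whole interval $[\delta_n,y_n]$ via the uniform convergence $R'(s)/h_n\to r(s/d_n)^{r-1}\leq r\delta^{r-1}$, and the elementary inequality $2^{-r}>r^{1/(1-r)}$ (equivalently $\log r+r(1-r)\log 2<0$ on $(0,1)$) turns the hypothesis $\delta>2^{-r}$ into $r\delta^{r-1}<1$, so $g'>0$ and the integral is bounded by $h_n(y_n-\delta_n)e^{g(y_n)}$; the polynomial prefactor is then absorbed since $|g(y_n)|$ is regularly varying with index $3-2/r>0$. The Taylor expansion of $g(y_n)$ and the control of the $R''$ error using $R''=O(x^{r-2+\epsilon})$ with $\epsilon<2(1-r)$ are also correct. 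This is a welcome sharpening of the paper's appendix.
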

\begin{proof}
	The argument goes along the exact same lines as for Lemma~\ref{lem:a:Hbound}. Once again apply formula~\eqref{eq:4:intbyparts} with $\psi(s) = e^{h_ns} $, $A = \varepsilon_n$ 
	and $B=y_n$. The estimate~\eqref{eq:a:estA} is 
	still sufficient for our purposes. To estimate the two other terms note that
	\begin{equation*}
		R(y_n) = R(d_n)-nR'(d_n)^2(1+o(1))
	\end{equation*}
	which implies
	\begin{equation*}
		h_ny_n -R(y_n) = \frac{nR(d_n)R'(d_n)}{d_n} \left(\frac{R'(d_n)d_n}{R(d_n)} -1\right)(1+o(1)).
	\end{equation*}
	Using the same arguments as in the previous lemma 
	\begin{multline*}
		\psi\left(y_n\right)  \P\left[X>y_n\right] +   \int_{\varepsilon_n}^{y_n} \psi'(s) \P[X>s]\ud s  \\
		\leq   \exp \left\{ \frac{nR(d_n)R'(d_n)}{d_n} \left(\frac{R'(d_n)d_n}{R(d_n)} -1\right)(1+o(1)) \right\}.
	\end{multline*}
\end{proof}

\bibliographystyle{amsplain}
\bibliography{PPCforBRWwithSET}

\end{document}